\documentclass{amsart}
\usepackage[a4paper, margin=1in]{geometry}
\usepackage{amssymb}
\usepackage[hyphens]{url}
\urlstyle{same}
\usepackage{amsmath}
\usepackage{amsthm}
\usepackage{mathrsfs}
\usepackage[title]{appendix}
\usepackage[utf8]{inputenc}
\usepackage[english]{babel}
\usepackage{cite}
\usepackage{hyperref}
\usepackage{cleveref}
\usepackage[section]{placeins}
\RequirePackage{textgreek}
\usepackage{enumitem}
\usepackage{bm}
\usepackage{nicefrac}
\usepackage{mathabx}
\usepackage[all,cmtip]{xy}
\usepackage{caption}
\usepackage{float}
\usepackage{mathtools}
\usepackage{xcolor}
\usepackage{xspace}
\usepackage{stmaryrd}
\usepackage{dsfont}

\hypersetup{
    colorlinks=true,
    linkcolor=blue,
    citecolor=cyan,
    urlcolor=red
}

\setcounter{tocdepth}{2}
\hypersetup{linktoc=all}
\numberwithin{equation}{subsection}

\widowpenalty=10000
\clubpenalty=10000
\displaywidowpenalty=10000

\newtheorem{thm}{Theorem}[section]
\newtheorem*{thm*}{Theorem}
\newtheorem{lem}[thm]{Lemma}
\newtheorem{prop}[thm]{Proposition}
\newtheorem{cor}[thm]{Corollary}

\newtheorem{defn}[thm]{Definition}

\newtheorem{que}[thm]{Question}

\newtheorem{mainthm}{Theorem}

\theoremstyle{remark}
\newtheorem{rem}[thm]{Remark}
\newtheorem{exm}[thm]{Example}

\newcommand{\diag}{\textsubscript{\raisebox{0.4ex}{\scalebox{0.6}{\(\blacktriangle\)}}}}\xspace
\newcommand{\first}{\textsubscript{\raisebox{0.4ex}{\scalebox{0.4}{\(\blacksquare\)}}}}\xspace

\title{\scalebox{0.9}{Periodicity of Point Processes in Abelian Groups without Lattices}}

\date{}

\author{Nachi Avraham-Re'em}
\address{Department of Mathematics, Chalmers and University of Gothenburg, Gothenburg, Sweden}
\curraddr{Department of Mathematics, Technion---Israel Institute of Technology, Haifa, Israel}
\email{nachi.avraham@gmail.com}

\author{Michael Bj\"{o}rklund}
\address{Department of Mathematics, Chalmers and University of Gothenburg, Gothenburg, Sweden}
\email{micbjo@chalmers.se}

\author{Rickard Cullman}
\address{Department of Mathematics, Chalmers and University of Gothenburg, Gothenburg, Sweden}
\email{cullman@chalmers.se}

\thanks{The research was supported by the Knut and Alice Wallenberg Foundation (KAW 2021.0258).}

\subjclass[2020]{37A15 (primary);
22F10, 22D40, 28D15, 22B05, 11B13, 52C23, 60G55, 11B57, 43A70}

\keywords{measure preserving action, cross section, periodic point process, intersection covolume}

\begin{document}

\begin{abstract}
We investigate the intersection covolume of cross sections for probability preserving actions of a class of abelian groups without lattices, including \(p\)-adic groups and the group of finite adeles. We show that for cross sections with a uniformly discrete return time set, the intersection covolume is bounded below by twice the intensity, revealing a strict gap compared to the lattice case. Our main theorem asserts that cut-and-project systems uniquely attain the minimal intersection covolume. As an application, we characterize the generalized Farey fractions in the finite adeles via their Banach density.
\end{abstract}

\maketitle

\thispagestyle{empty}

\setcounter{tocdepth}{1}
\tableofcontents

\section{Introduction}

Stationary point processes can be studied from the perspective of ergodic theory by realizing them as probability preserving actions of locally compact second countable (lcsc) groups equipped with cross sections. In our previous work~\cite{AvBjCuI}, we introduced the {\em intersection covolume} for such systems, which is a numerical invariant that measures their degree of periodicity, and analyzed the case of lcsc groups that may admit lattices. In that setting, {\em induced actions} from lattices, with a singleton as cross section, serve as the model of complete periodicity.

As noted in~\cite[\S1.4]{AvBjCuI}, this leads naturally to the guiding question of the present paper:
\begin{quote}
\emph{How periodic can a stationary point process be when the acting group has no lattices?}
\end{quote}

We address this question for a broad family of lcsc abelian groups without lattices, referred to here as {\em class \(\mathcal{Q}\)}. This class includes, for example, the \(p\)-adic groups \(\mathbb{Q}_{p}\) and the group of finite adeles \(\mathbb{A}_{\mathrm{fin}}\).

\subsection{Intensity and intersection covolume}

Let \(G\) be an lcsc abelian group acting in a probability preserving way on a standard probability space \(\left(X,\mu\right)\). A {\bf cross section} is a Borel set \(Y\subset X\) that meets every \(G\)-orbit and whose {\bf return times sets} 
\[\left.Y_{x}\coloneqq\{g\in G: g.x\in Y\}, \quad x\in X,\right.\] 
are locally finite in \(G\). The {\bf return times set} of \(Y\) itself is the symmetric set 
\[\Lambda_{Y}\coloneqq\{g\in G: g.Y\cap Y\neq\emptyset\}.\] 
Every cross section \(Y\) carries a transverse measure \(\mu_{Y}\) on \(Y\), playing the role of the classical Palm measure for point processes. Its total mass is the {\bf intensity} 
\[\iota_{\mu}\left(Y\right)\coloneqq\mu_{Y}\left(Y\right)\in\left(0,+\infty\right].\] 
We say that \(Y\) is {\bf locally integrable} if \(\iota_{\mu}\left(Y\right)<+\infty\); in this case we refer to \(\left(X,\mu,Y\right)\) as a {\bf transverse} \(G\){\bf-space}, and we define its {\bf intersection covolume} 
\[I_{\mu}\left(Y\right)\in\left(0,+\infty\right],\] 
which quantifies the periodicity of \(Y\).

Following~\cite[\S5.2]{AvBjCuI}, a map \(\phi:\left(X,\mu,Y\right)\to\left(W,\nu,Z\right)\) between transverse \(G\)-spaces is a {\bf transverse} \(G\){\bf-factor} if \(\phi:\left(X,\mu\right)\to\left(W,\nu\right)\) is a \(G\)-factor map of probability preserving \(G\)-spaces and 
\[Y_{x}=Z_{\phi\left(x\right)} \quad\text{for }\mu\text{-a.e. }x\in X.\] 
By~\cite[Theorem~A]{AvBjCuI} one always has 
\begin{equation}\label{eq:genineq}
I_{\mu}\left(Y\right)\geq \iota_{\mu}\left(Y\right),
\end{equation}
with equality (i.e. complete periodicity) if and only if \(\left(X,\mu,Y\right)\) admits a transverse \(G\)-factor onto a homogeneous space \(\left(\Gamma\backslash G,m_{\Gamma\backslash G},\{\Gamma\}\right)\) for some lattice \(\Gamma<G\).

\subsection{The main inequality for class \(\mathcal{Q}\) groups}

We call {\bf class \(\mathcal{Q}\)} the collection of totally disconnected, non-discrete, non-compact lcsc abelian groups whose dual is torsion free; details are given in Section~\ref{sct:classQ}. Our first theorem strengthens \eqref{eq:genineq} for class~\(\mathcal{Q}\) groups under the additional assumption that \(\Lambda_{Y}\) is uniformly discrete—equivalently, that \(e_{G}\) is isolated in \(\Lambda_{Y}^{2}\).

\begin{mainthm}\label{mthm:mineq}
Let \(G\) be a \hyperlink{classQ}{class \(\mathcal{Q}\)} group. For every ergodic transverse \(G\)-space \(\left(X,\mu,Y\right)\) such that \(\Lambda_{Y}\) is uniformly discrete,
\begin{equation}\label{eq:mineq}
I_{\mu}\left(Y\right)\geq 2\cdot \iota_{\mu}\left(Y\right).
\end{equation}
\end{mainthm}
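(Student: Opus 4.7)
The plan is to leverage the equality characterization of \eqref{eq:genineq}: $I_\mu(Y)=\iota_\mu(Y)$ exactly when $(X,\mu,Y)$ factors transversely onto a lattice homogeneous space $(\Gamma\backslash G, m_{\Gamma\backslash G}, \{\Gamma\})$. Since class $\mathcal{Q}$ groups admit no lattices, equality cannot hold, and the abstract indicates that the extremal cases of \eqref{eq:mineq} are precisely the cut-and-project systems. The factor $2$ should therefore be read as the sharp ``penalty'' one pays for replacing a forbidden lattice quotient by a cut-and-project model; the approach is to turn this heuristic into a proof.

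I would proceed in three steps. First, using that $\Lambda_Y$ is uniformly discrete and that class $\mathcal{Q}$ groups have a neighborhood base of compact open subgroups, I would attach to the return process a canonical closed subgroup $H\leq G$ --- for instance the closed subgroup generated by $\Lambda_Y-\Lambda_Y$ --- which is ergodically determined and into which $\Lambda_Y$ embeds as a discrete point set. Second, because $\hat G$ is torsion free and $G$ has no lattices, $H$ cannot be a discrete cocompact subgroup of $G$; the only structure compatible with the Palm--Campbell data carried by $\Lambda_Y$ should then be that of a cut-and-project realization, where $H$ lifts to a lattice in an extension $G\times L$ with $L$ a compact abelian group and $\Lambda_Y$ is parametrized by a window $W\subset L$. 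Third, I would bound $I_\mu(Y)$ from below directly via the Palm formula, exploiting the symmetry $\Lambda_Y=-\Lambda_Y$: the contributions coming from positive and negative shifts correspond to the windows $W$ and $-W$ in $L$, and the torsion-freeness of $\hat G$ is exactly what prevents any identification of these two contributions, giving $I_\mu(Y)\geq \mu_L(W)+\mu_L(-W)=2\,\iota_\mu(Y)$.

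The main obstacle, and the step I expect to be delicate, is the second one: the internal reconstruction of a cut-and-project model from $(X,\mu,Y)$ without presupposing one. Concretely, one must pass from the combinatorial fact that $\Lambda_Y$ is uniformly discrete to an actual transverse $G$-factor of $(X,\mu,Y)$ onto a cut-and-project system over $H$. I anticipate that this requires a spectral analysis of the Koopman representation on $L^2(X,\mu)$ combined with a Mackey-style decomposition relative to $H$, and that the torsion-free dual hypothesis enters precisely to exclude any residual discrete spectrum that would merge $W$ with $-W$ and thereby violate the factor $2$.
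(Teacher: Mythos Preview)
Your proposal has a genuine structural gap in Step~2, and it misidentifies the mechanism that produces the factor~$2$.

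The central problem is that you are trying to reconstruct a cut--and--project model for an \emph{arbitrary} ergodic transverse $G$-space with uniformly discrete $\Lambda_Y$. This is far stronger than what is true: in the paper, the existence of a transverse factor onto a cut--and--project space is the \emph{conclusion} of Theorem~\ref{mthm:mineqext}, established only in the equality case $I_\mu(Y)=2\iota_\mu(Y)$, and its proof occupies the bulk of Section~6. You are proposing to use this structure as an input to prove the inequality, which is circular (or at best requires proving a strictly harder theorem first). There is no reason to expect that every doubly separated cross section admits a cut--and--project description; your closed subgroup $H=\overline{\langle\Lambda_Y-\Lambda_Y\rangle}$ will typically just be $G$ itself, carrying no useful extra structure. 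Your own remark that this step ``requires a spectral analysis \ldots\ combined with a Mackey-style decomposition'' is correct in spirit but vastly underestimates the difficulty.

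The paper's route is far more direct and avoids this obstacle entirely. One passes to the Kronecker factor $\kappa:(X,\mu)\to(K,m_K)$; the key point is that for class~$\mathcal{Q}$ groups the compactification $K$ is \emph{connected} (Proposition~\ref{prop:classqgrp}(3), a consequence of $\widehat{G}$ being torsion-free). For a small compact open subgroup $V<G$ one takes the Kronecker shadow $S=(V.Y)_{\mathcal{K}_G}\subseteq K$ and establishes the chain
\[
m_G(V)\cdot I_\mu(Y)\;\ge\; m_K(S-S)\;\ge\;2\,m_K(S)\;\ge\;2\,m_G(V)\cdot\iota_\mu(Y),
\]
where the middle inequality is Kneser's inequality $m_K(C-C)\ge 2m_K(C)$ in a connected compact abelian group. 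The factor~$2$ thus comes from Kneser, not from any $W$ versus $-W$ symmetry argument; your heuristic in Step~3 does not correspond to an actual mechanism here (and note that in the genuine cut--and--project extremals the internal group is $\mathbb{R}$, not a compact $L$). The torsion-free dual hypothesis enters precisely to guarantee connectedness of $K$, which is what makes Kneser applicable.
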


\subsection{Abelian cut--and--project systems and their intersection covolume}

Our main result (Theorem~\ref{mthm:mineqext}) gives a complete characterization of the equality cases in \eqref{eq:mineq} as those arising from cut--and--project systems. To prepare for this, we compute the intensity and intersection covolume of such systems in the abelian setting. By~\cite[Theorem~B]{AvBjCuI}, cut--and--project systems always have finite intersection covolume, but its precise value is generally unknown; restricting to abelian groups allows us to obtain explicit formulas.

An {\bf abelian cut--and--project scheme} is a triple \(\left(G,H;\Gamma\right)\) where \(G\) and \(H\) are lcsc abelian groups and \(\Gamma<G\times H\) is a lattice projecting injectively to \(G\) and densely to \(H\). It yields the ergodic probability preserving \(G\)-space 
\[\left(\Xi,\xi\right)\coloneqq\left(\Gamma\backslash\left(G\times H\right),m_{\Gamma\backslash\left(G\times H\right)}\right),\] 
with \(G\) acting on the first coordinate. For a relatively compact Jordan measurable {\em window} \(W\subset H\), we define the cross section 
\[Y_{W}\coloneqq\left\{\Gamma\left(e_{G},w\right):w\in W\right\}.\] 
Its return times set is the (symmetric) {\em cut--and--project set} 
\[\Lambda_{Y_{W}}=\operatorname{proj}_{G}\left(\Gamma\cap\left(G\times\left(W-W\right)\right)\right).\]

\begin{mainthm}\label{mthm:cap}
Let \(\left(G,H;\Gamma\right)\) be an abelian cut--and--project scheme and \(W\subset H\) a \hyperlink{window}{window}. For the associated cut--and--project \(G\)-space \(\left(\Xi,\xi\right)\) with cross section \(Y_{W}\):
\begin{enumerate}
    \item The intensity is 
    \[\iota_{\xi}\left(Y_{W}\right)=\mathrm{covol}_{G\times H}\left(\Gamma\right)^{-1}\cdot m_{H}\left(W\right).\] 
    In particular, if \(W\) is Jordan measurable, then \(\iota_{\xi}\left(Y_{W^{o}}\right)=\iota_{\xi}\left(Y_{W}\right)\).
    \item If \(W\) is open or \(W-W\) is Jordan measurable, then 
    \[\mathrm{covol}_{G\times H}\left(\Gamma\right)^{-1}\cdot m_{H}\left(W^{o}-W^{o}\right)\leq I_{\xi}\left(Y_{W}\right)\leq \mathrm{covol}_{G\times H}\left(\Gamma\right)^{-1}\cdot m_{H}\left(W-W\right).\]
\end{enumerate}
\end{mainthm}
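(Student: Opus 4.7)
The plan is to derive both parts from the explicit Haar measure description of $\Xi = \Gamma\backslash(G\times H)$ and the cross section $Y_W$.

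For Part (1), I would compute the intensity directly from its defining Palm-measure characterization. Fix a relatively compact Borel set $U\subset G$ with $m_G(U)>0$; then the intensity satisfies
\[
\iota_{\xi}(Y_W)\cdot m_G(U) = \int_\Xi \#\{g\in U : g.x\in Y_W\}\,d\xi(x).
\]
Unfolding the integral to a fundamental domain $F\subset G\times H$ for $\Gamma$ and writing $x=\Gamma(g_0,h_0)$, the integrand counts the $\gamma\in\Gamma$ such that $\mathrm{proj}_G(\gamma)-g_0\in U$ and $\mathrm{proj}_H(\gamma)-h_0\in -W$. Since $\mathrm{proj}_G|_\Gamma$ is injective, each such $\gamma$ contributes at most once, and a standard unfolding/Fubini argument on $G\times H$ turns the double integral into $m_G(U)\cdot m_H(W)/\mathrm{covol}_{G\times H}(\Gamma)$. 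Cancelling $m_G(U)$ yields the stated intensity formula. The equality $\iota_{\xi}(Y_{W^o})=\iota_{\xi}(Y_W)$ for Jordan measurable $W$ is then immediate because $m_H(W\setminus W^o)=0$.

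For Part (2), the starting point is the identity $\Lambda_{Y_W}=\mathrm{proj}_G(\Gamma\cap(G\times(W-W)))$ already recorded in the statement. I would use the general framework of \cite{AvBjCuI} to identify $I_\xi(Y_W)$ with the intensity of an auxiliary cross section that encodes the self-intersection structure of $Y_W$ under $G$-translation. Because every return time originates from a $\gamma\in\Gamma$ whose second coordinate lies in $W-W$, this auxiliary cross section can be realized as a cut-and-project cross section whose window is trapped between $W^o-W^o$ (approximating from the interior) and $W-W$ (the full symmetric window). Applying Part (1) to these two windows then yields the sandwich
\[
\mathrm{covol}_{G\times H}(\Gamma)^{-1}\cdot m_H(W^o-W^o)\leq I_\xi(Y_W)\leq \mathrm{covol}_{G\times H}(\Gamma)^{-1}\cdot m_H(W-W).
\]
When $W$ is open one has $W^o-W^o=W-W$ and the bounds coincide; when $W-W$ is Jordan measurable the boundary has zero $m_H$-measure so the two bounds still agree.

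The main obstacle lies in Part (2). Whereas Part (1) is a routine unfolding computation, Part (2) requires extracting a concrete geometric interpretation of the abstract invariant $I_\xi$ from \cite{AvBjCuI} and identifying the correct auxiliary cut-and-project cross section whose intensity realizes the bound. Tracking interior versus closure of $W$ and of $W-W$ — that is, explaining why the upper estimate cannot exceed $m_H(W-W)$ and why the lower estimate is at least $m_H(W^o-W^o)$ — is the technical crux; it reflects whether the relevant self-intersection window is open or closed and is precisely what forces the Jordan-measurability assumption to close the gap.
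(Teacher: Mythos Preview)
Your approach to Part~(1) is correct and essentially identical to the paper's: both amount to an unfolding argument via the Weil integration formula (the paper packages it as a Campbell-theorem computation of the transverse measure, but the content is the same).

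Part~(2), however, has a genuine gap. You correctly recognize that $I_\xi(Y_W)=\iota_{\xi\otimes\xi}(Y_W^{[2]})$, but the claim that ``this auxiliary cross section can be realized as a cut-and-project cross section whose window is trapped between $W^o-W^o$ and $W-W$'' is the crux, and you have not supplied the realization. The cross section $Y_W^{[2]}=G\diag(Y_W\times Y_W)$ lives in $\Xi\times\Xi$, not in $\Xi$; the paper's device is the difference map $\delta:\Xi\times\Xi\to\Xi$, $\delta(x,x')=x-x'$, and the key identity
\[
\delta^{-1}(Y_{W-W})=\Xi\diag(Y_W\times Y_W)=G\diag(Y_W\times Y_W)=Y_W^{[2]}.
\]
The middle equality is not automatic: it requires $W$ to be \emph{open} and uses the density of $\mathrm{proj}_H(\Gamma)$ in $H$ (this is the paper's Lemma~\ref{lem:capident}). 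Without it, one only has the trivial inclusion $G\diag\subseteq\Xi\diag$, which would at best give one inequality. Once the open case is settled exactly, the general case follows by sandwiching $W^o\subseteq W\subseteq W^\epsilon$ with an explicitly constructed open $W^\epsilon$ satisfying $m_H((W^\epsilon-W^\epsilon)\setminus(W-W))<\epsilon$ (Lemma~\ref{lem:capapp}); this is where Jordan measurability of $W-W$ enters, and it is not the argument you sketch.

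Finally, your closing remark is incorrect: when $W-W$ is Jordan measurable the two bounds need \emph{not} agree. Jordan measurability gives $m_H(\overline{W-W})=m_H((W-W)^o)$, but $(W-W)^o$ can strictly contain $W^o-W^o$, so the theorem really is a two-sided inequality in that case, not an equality.
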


The proof of Theorem~\ref{mthm:cap}, combined with the Brunn--Minkowski inequality, yields:

\begin{cor}\label{cor:cap}
Let \(\left(G,\mathbb{R}^{d};\Gamma\right)\) be an abelian cut--and--project scheme and \(W\subset\mathbb{R}^{d}\) a \hyperlink{window}{window}. Then 
\begin{equation}\label{eq:corcap}
I_{\xi}\left(Y_{W}\right)\geq 2^{d}\cdot\iota_{\xi}\left(Y_{W}\right).
\end{equation}
Moreover:
\begin{enumerate}
    \item If \(W=\mathrm{conv}\left(W\right)\) modulo \(m_{\mathbb{R}^{d}}\), then equality holds in \eqref{eq:corcap}.
    \item If equality holds in \eqref{eq:corcap}, then \(W=\mathrm{conv}\left(W^{o}\right)\) modulo \(m_{\mathbb{R}^{d}}\).
\end{enumerate}
\end{cor}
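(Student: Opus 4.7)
The plan is to combine the explicit bounds on $I_\xi(Y_W)$ from Theorem~\ref{mthm:cap}(2) with the Brunn--Minkowski inequality on $\mathbb{R}^d$; the two equality cases then reduce to the corresponding (rigidity) statements of Brunn--Minkowski for convex bodies and for general measurable sets.

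For the main inequality \eqref{eq:corcap}, I first apply Brunn--Minkowski to the open sets $W^o$ and $-W^o$, giving
\[m_{\mathbb{R}^d}(W^o-W^o)^{1/d}\geq m_{\mathbb{R}^d}(W^o)^{1/d}+m_{\mathbb{R}^d}(-W^o)^{1/d}=2\,m_{\mathbb{R}^d}(W^o)^{1/d},\]
and hence $m_{\mathbb{R}^d}(W^o-W^o)\geq 2^d\,m_{\mathbb{R}^d}(W)$ by Jordan measurability of $W$. The lower bound in Theorem~\ref{mthm:cap}(2), applied to the open window $W^o$ (whose intensity and intersection covolume coincide with those of $W$ by Theorem~\ref{mthm:cap}(1) and by the fact that $W^o-W^o$ and $W-W$ agree up to a null set when $W$ is Jordan measurable), then gives $I_\xi(Y_W)\geq\mathrm{covol}_{G\times\mathbb{R}^d}(\Gamma)^{-1}\,m_{\mathbb{R}^d}(W^o-W^o)\geq 2^d\,\iota_\xi(Y_W)$.

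For (1), assuming $W=\mathrm{conv}(W)$ modulo $m_{\mathbb{R}^d}$, I may replace $W$ by a convex body with the same intensity and intersection covolume. Since $W$ and $-W$ are then homothetic convex bodies, Brunn--Minkowski is tight and $m_{\mathbb{R}^d}(W-W)=2^d\,m_{\mathbb{R}^d}(W)$. The upper bound of Theorem~\ref{mthm:cap}(2) forces $I_\xi(Y_W)\leq 2^d\,\iota_\xi(Y_W)$, so combined with the main inequality, equality holds in \eqref{eq:corcap}. For (2), equality in \eqref{eq:corcap} propagates back to equality in Brunn--Minkowski for $W^o$ and $-W^o$. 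The classical rigidity of Brunn--Minkowski for measurable sets then yields a convex body $K\subset\mathbb{R}^d$ with $m_{\mathbb{R}^d}(W^o\triangle K)=0$. Because $W^o$ is open and $K$ is closed, the open null set $W^o\setminus K$ is empty, so $W^o\subset K$ and consequently $\mathrm{conv}(W^o)\subset K$. Then
\[m_{\mathbb{R}^d}(W^o)\leq m_{\mathbb{R}^d}(\mathrm{conv}(W^o))\leq m_{\mathbb{R}^d}(K)=m_{\mathbb{R}^d}(W^o),\]
so $m_{\mathbb{R}^d}(\mathrm{conv}(W^o))=m_{\mathbb{R}^d}(W^o)=m_{\mathbb{R}^d}(W)$, and the inclusions $W^o\subset W\cap\mathrm{conv}(W^o)$ together with Jordan measurability of $W$ yield $W=\mathrm{conv}(W^o)$ modulo $m_{\mathbb{R}^d}$.

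The main obstacle is invoking the correct rigidity statement for Brunn--Minkowski beyond the convex-body setting: for measurable $A,B\subset\mathbb{R}^d$ with positive measures satisfying $m(A+B)^{1/d}=m(A)^{1/d}+m(B)^{1/d}$, the sets $A$ and $B$ must be homothetic convex bodies modulo null sets. This is a classical but subtle result, and the citation must be chosen carefully (for instance from Gardner's survey or Schneider's monograph on the Brunn--Minkowski theory).
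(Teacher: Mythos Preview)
Your overall strategy is the same as the paper's: combine the bounds in Theorem~\ref{mthm:cap}(2) with Brunn--Minkowski and its equality case. Your treatment of part~(2) is in fact more careful than the paper's (the observation that $W^o\setminus K$ is open and null, hence empty, is a nice way to pass from ``$W^o=K$ mod $m$'' to the inclusion $\mathrm{conv}(W^o)\subseteq K$).

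There is, however, one genuine gap. You assert that ``$W^o-W^o$ and $W-W$ agree up to a null set when $W$ is Jordan measurable,'' and use this to conclude that $I_\xi(Y_{W^o})=I_\xi(Y_W)$. This claim is false: take $W=[0,1]\cup\{2\}\subset\mathbb{R}$, which is Jordan measurable; then $W^o-W^o=(-1,1)$ has measure $2$, whereas $W-W=[-2,2]$ has measure $4$. So you cannot identify $I_\xi(Y_W)$ with $I_\xi(Y_{W^o})$ this way. What is true, and what the paper uses, is the \emph{inequality} $I_\xi(Y_W)\geq I_\xi(Y_{W^o})$, which follows from monotonicity of the intersection covolume (since $Y_{W^o}\subseteq Y_W$; see \cite[Proposition~7.1]{AvBjCuI}). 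This inequality is all you need for \eqref{eq:corcap}, so the fix is easy, but the argument as written does not go through.

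A smaller point: in part~(1) you say you ``may replace $W$ by a convex body with the same intensity and intersection covolume.'' Equal intensity is immediate from Theorem~\ref{mthm:cap}(1), but equal intersection covolume is precisely what you are trying to prove, so this phrasing is circular. The cleaner route (and the paper's) is: put $K=\mathrm{conv}(W)$, so $W\subseteq K$ and $m(K\setminus W)=0$; then $I_\xi(Y_W)\leq I_\xi(Y_K)$ by monotonicity, and $I_\xi(Y_K)=\mathrm{covol}^{-1}m(K-K)=2^d\,\mathrm{covol}^{-1}m(K)=2^d\,\iota_\xi(Y_W)$ by Theorem~\ref{mthm:cap} applied to the convex (hence Jordan measurable, with $K-K$ Jordan measurable) window $K$. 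Combined with \eqref{eq:corcap} this gives equality.
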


\begin{rem}
The equality statements in Corollary~\ref{cor:cap} imply that if \(W\subset\mathbb{R}^{d}\) satisfies \(W=\mathrm{conv}\left(W\right)\) modulo \(m_{\mathbb{R}^{d}}\), then also \(W=\mathrm{conv}\left(W^{o}\right)\) modulo \(m_{\mathbb{R}^{d}}\). This is because bounded convex sets are well-known to be Jordan measurable (see~\cite{szabo1997simple}), and convex hull and interior commute.
\end{rem}

\subsection{The extremes of the main inequality}

We call a class \(\mathcal{Q}\) group \(G\) {\bf cosolenoidal} if \(G\times\mathbb{R}\) admits a lattice. Examples include the \(p\)-adic groups \(\mathbb{Q}_{p}\) and the group \(\mathbb{A}_{\mathrm{fin}}\) of finite adeles. As will be discussed in Section~\ref{sct:coso}, there are also many non-cosolenoidal groups within class~\(\mathcal{Q}\).

If \(G\) is cosolenoidal, it admits a cut--and--project scheme \(\left(G,\mathbb{R};\Gamma\right)\), and choosing a compact interval as a window produces a cross section achieving equality in \eqref{eq:mineq} by Corollary~\ref{cor:cap}. Our main theorem shows that, essentially, equality in \eqref{eq:mineq} occurs only for cut--and--project \(G\)-spaces.

\begin{mainthm}\label{mthm:mineqext}
Let \(G\) be a \hyperlink{classQ}{class \(\mathcal{Q}\)} group. For every ergodic transverse \(G\)-space \(\left(X,\mu,Y\right)\), the following are equivalent:
\begin{enumerate}
    \item \(\Lambda_{Y}\) is uniformly discrete and \(I_{\mu}\left(Y\right)=2\cdot\iota_{\mu}\left(Y\right)\).
    \item There exists a compact interval \(W\subset\mathbb{R}\) and a transverse \(G\)-factor from \(\left(X,\mu,Y\right)\) onto a cut--and--project \(G\)-space \(\left(\Gamma\backslash\left(G\times\mathbb{R}\right),m_{\Gamma\backslash\left(G\times\mathbb{R}\right)}\right)\) for some lattice \(\Lambda<G\times\mathbb{R}\).
\end{enumerate}
In particular, in this case \(G\) is necessarily cosolenoidal.
\end{mainthm}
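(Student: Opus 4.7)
For the direction $(2)\Rightarrow(1)$, I would first observe that a transverse $G$-factor preserves both $\iota_\mu$ and $I_\mu$, since both are functionals of the return times structure $\{Y_x\}$ and this structure coincides with that of the target on a conull set by the very definition of a transverse factor. Applying Corollary~\ref{cor:cap} with $d=1$ to the compact interval $W\subset\mathbb{R}$, which trivially satisfies $W=\mathrm{conv}(W)$ modulo Lebesgue, then gives $I_\xi(Y_W)=2\,\iota_\xi(Y_W)$, whence $I_\mu(Y)=2\,\iota_\mu(Y)$. Uniform discreteness of $\Lambda_Y=\Lambda_{Y_W}=\operatorname{proj}_G\bigl(\Gamma\cap(G\times(W-W))\bigr)$ is immediate since $W-W$ is a bounded interval and $\Gamma$ is a lattice in $G\times\mathbb{R}$.

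The heart of the theorem is $(1)\Rightarrow(2)$. My plan is to revisit the proof of Theorem~A and extract rigidity from each of its inequality steps. That proof should attach to the ergodic transverse system $(X,\mu,Y)$ an auxiliary lcsc abelian group $H$ together with a ``would-be window'' $W_\ast\subset H$, produced via a Mackey-style analysis exploiting the class $\mathcal{Q}$ hypotheses, and then derive an estimate of the form $I_\mu(Y)/\iota_\mu(Y)\geq m_H(W_\ast-W_\ast)/m_H(W_\ast)$ combined with a one-dimensional Brunn--Minkowski-type estimate $m_H(W_\ast-W_\ast)\geq 2\,m_H(W_\ast)$. Saturation should force $H\cong\mathbb{R}$ and, by the rigidity in the Brunn--Minkowski inequality on $\mathbb{R}$, force $W_\ast$ to coincide modulo $m_{\mathbb{R}}$ with a compact interval $W\subset\mathbb{R}$. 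These data should assemble into a $G$-equivariant measurable map $\varphi:X\to\Gamma\backslash(G\times\mathbb{R})$ for some closed subgroup $\Gamma<G\times\mathbb{R}$; ergodicity of the $G$-action forces $\Gamma$ to project densely to $\mathbb{R}$, while uniform discreteness of $\Lambda_Y$ forces $\Gamma$ to project injectively to $G$, so $\Gamma$ is in fact a lattice, $(G,\mathbb{R};\Gamma)$ is a genuine cut--and--project scheme, and pulling back $Y_W$ along $\varphi$ yields a set agreeing with $Y$ almost everywhere. Cosolenoidality of $G$ is then automatic, being exactly the existence of the lattice $\Gamma$ in $G\times\mathbb{R}$.

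The principal obstacle I expect is the extraction of the correct auxiliary group $H$ and the exclusion of totally disconnected contributions to extremality. A priori, the proof of Theorem~A could produce $H$ as an arbitrary class $\mathcal{Q}$-like abelian group with both archimedean and non-archimedean components, and the hard part is to show that the factor $2$ in \eqref{eq:mineq} can only be saturated by an archimedean $\mathbb{R}$-direction with $W_\ast$ an interval. The torsion-free-dual hypothesis should enter here to guarantee that a real Lie quotient is available and that no purely totally disconnected direction can yield $m_H(W_\ast-W_\ast)=2\,m_H(W_\ast)$. Making this quantitative, and then verifying that the resulting $\Gamma$ is a lattice rather than merely a closed subgroup, is where the bulk of the technical work should lie.
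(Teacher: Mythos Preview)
Your treatment of $(2)\Rightarrow(1)$ is fine and matches the paper.

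For $(1)\Rightarrow(2)$, your high-level instinct---revisit the proof of Theorem~\ref{mthm:mineq} and extract rigidity from its equality case---is exactly right, but your picture of what that proof actually does is off, and the resulting plan would not lead anywhere concrete. There is no Mackey-style auxiliary group $H$ with a would-be window $W_\ast$, and the relevant inequality is not Brunn--Minkowski in an lcsc group. The refined form of Theorem~\ref{mthm:mineq} (Theorem~\ref{thm:mineq}) works entirely inside the \emph{Kronecker compactification} $(K,\tau)$ of $(X,\mu)$: for each small compact open subgroup $V<G$ one forms the Kronecker shadow $S=(V.Y)_{\mathcal{K}_G}\subset K$, and the chain is $m_G(V)\,I_\mu(Y)\geq m_K(S-S)\geq 2\,m_K(S)\geq 2\,m_G(V)\,\iota_\mu(Y)$. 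The middle step is \emph{Kneser's inequality} on the compact \emph{connected} abelian group $K$ (connectedness coming from torsion-freeness of $\widehat{G}$, Proposition~\ref{prop:classqgrp}(3)), whose equality case says $S=\chi^{-1}(I)$ modulo $m_K$ for some character $\chi\in\widehat{K}$ and closed arc $I\subset\mathbb{T}$. So saturation does not force ``$H\cong\mathbb{R}$''; rather, it hands you, for each $V_n$ in a decreasing local base, a $G$-eigenfunction $f_n=\xi_n\circ\kappa:X\to\mathbb{T}$ and an arc $I_n\subset\mathbb{T}$ with $f_n^{-1}(I_n)=V_n.Y$ modulo $\mu$.

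The real work, which your proposal does not anticipate, is to organise these $(f_n,\chi_n,I_n)$ into a coherent inverse system. One shows $\ker\chi_{n+1}\leq\ker\chi_n$ and hence $\chi_n=\chi_{n+1}^{q_n}$ for integers $q_n$ (using torsion-freeness of $\widehat{G}$), then that each $q_n$ equals the prime index $[V_n:V_{n+1}]$ and $I_n=I_{n+1}^{q_n}$. The $f_n$ assemble into a map $\phi:X\to\mathbb{S}$ into the solenoid $\mathbb{S}=\varprojlim(\mathbb{T}\xleftarrow{q_n}\mathbb{T})$, and one constructs by hand a surjection $\Phi:G\times\mathbb{R}\to\mathbb{S}$ with discrete kernel $\Gamma$, identifying $\mathbb{S}\cong\Gamma\backslash(G\times\mathbb{R})$; the window $W$ arises as the arc $I_1$ lifted to $\mathbb{R}$. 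This is where $\mathbb{R}$ enters---through the universal cover of the solenoid---not via a saturation argument singling out an archimedean direction inside some larger $H$.
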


We also point out a potential higher-dimensional analogue of Theorems~\ref{mthm:mineq} and~\ref{mthm:mineqext}. For \(d\in\mathbb{N}\), call an lcsc abelian group \(G\) \(d\){\bf-cosolenoidal} if \(G\times\mathbb{R}^{d}\) admits a lattice but \(G\times\mathbb{R}^{d-1}\) does not.

\begin{que}
Let \(G\) be a class \(\mathcal{Q}\) group that is \(d\)-cosolenoidal. Is it true that for every ergodic transverse \(G\)-space \(\left(X,\mu,Y\right)\), where \(Y\) is \(s\)-separated for some sufficiently large \(s\in\mathbb{N}\), one has
\[I_{\mu}\left(Y\right)\geq 2^{d}\cdot\iota_{\mu}\left(Y\right)\,\,?\]
Is it true that equality \(I_{\mu}\left(Y\right)=2^{d}\cdot\iota_{\mu}\left(Y\right)\) implies the existence of a transverse \(G\)-factor from \(\left(X,\mu,Y\right)\) to a transverse cut--and--project \(G\)-space \(\left(\Gamma\backslash\left(G\times\mathbb{R}^{d}\right),m_{\Gamma\backslash\left(G\times\mathbb{R}^{d}\right)},Y_{W}\right)\) for some lattice \(\Gamma<G\times\mathbb{R}^{d}\) and a window \(W\) with \(W=\mathrm{conv}\left(W\right)\) modulo \(m_{\mathbb{R}^{d}}\)?
\end{que}

Not every lcsc abelian group is \(d\)-cosolenoidal for some \(d\in\mathbb{N}\). Inspired by Meyer~\cite[Ch.~II, \S11]{meyer1972algebraic}, we construct examples of class~\(\mathcal{Q}\) groups \(G\) that are not \(d\)-cosolenoidal for any \(d\in\mathbb{N}\), and indeed satisfy that \(G\times H\) admits no lattice for any lcsc abelian group \(H\) (see Example~\ref{exm:Meyer}). Whether such a \(G\) can admit an ergodic transverse \(G\)-space with finite intersection covolume remains unknown.

\subsection{A Banach density characterization of generalized Farey fractions in the finite adeles}

We now combine Theorems~\ref{mthm:mineq} and~\ref{mthm:mineqext} with a transverse version of the Furstenberg correspondence principle, due to the second author and Fish~\cite{BjFi2024}, to obtain a Banach-density characterization of {\em generalized Farey fractions}. This is a continuous analog of the discrete amenable case developed by the second author together with Fish~\cite{BjFi2019} and with Fish and Shkredov~\cite{BjFi2021}. At the same time, passing to the continuous setting necessitates significant technical innovations to identify and prove the correct generalization.

Recall the classical Farey fractions
\[\mathfrak{F}\coloneqq\left\{ q\in\mathbb{Q}: q\in\left[0,1\right]\right\}.\]
We recast \(\mathfrak{F}\) as a uniformly discrete subset of the finite adeles \(\mathbb{A}_{\mathrm{fin}}=\prod\nolimits_{p\text{ prime}}^{\prime}\left(\mathbb{Q}_{p},\mathbb{Z}_{p}\right)\). This is a cosolenoidal class~\(\mathcal{Q}\) group, since the diagonal embedding \(\mathbb{Q}<\mathbb{A}\coloneqq\mathbb{A}_{\mathrm{fin}}\times\mathbb{R}\) is a lattice. Consider the cut--and--project scheme \(\left(\mathbb{A}_{\mathrm{fin}},\mathbb{R};\mathbb{Q}\right)\). Under the diagonal embedding \(\iota:\mathbb{Q}\to\mathbb{A}_{\mathrm{fin}}\), \(\iota\left(q\right)=\left(q,q,\dotsc\right)\), the set \(\mathfrak{F}\) is exactly the cut--and--project set corresponding to the window \(W=\left[0,1\right]\):
\[\mathfrak{F}=\left\{\iota\left(q\right): q\in\mathbb{Q}\cap\left[0,1\right]\right\}=\operatorname{proj}_{\mathbb{A}_{\mathrm{fin}}}\left(\mathbb{Q}\cap\left(\mathbb{A}_{\mathrm{fin}}\times\left[0,1\right]\right)\right)\subset\mathbb{A}_{\mathrm{fin}}.\]
For an arbitrary compact interval \(W\subset\mathbb{R}\), we define the {\bf generalized Farey fractions} associated with \(W\) to be
\[\mathfrak{F}\left(W\right)\coloneqq\operatorname{proj}_{\mathbb{A}_{\mathrm{fin}}}\left(\mathbb{Q}\cap\left(\mathbb{A}_{\mathrm{fin}}\times W\right)\right)=\left\{\iota\left(q\right): q\in\mathbb{Q}\cap W\right\}\subset\mathbb{A}_{\mathrm{fin}}.\]
For \(u\in\mathbb{Z}_{\mathrm{fin}}\), a finite adelic integer, we write \(u\cdot\mathfrak{F}\left(W\right)\) for the {\bf dilated generalized Farey fractions}.

The basic facts underlying this perspective will be recalled in Section~\ref{sct:gff}. For a locally finite set \(P\subset\mathbb{A}_{\mathrm{fin}}\), let \(d^{\ast}\left(P\right)\) denote its upper Banach density with respect to strong F\o lner sequences (see Section~\ref{sct:Banach}). From Corollary~\ref{cor:cap} we deduce the formulas, for the adelic norm \(\left\Vert\cdot\right\Vert_{\mathrm{fin}}\) on \(\mathbb{Z}_{\mathrm{fin}}\),
\begin{equation}\label{eq:densff}
d^{\ast}\left(u\cdot\mathfrak{F}\left(W\right)\right)=\left\Vert u\right\Vert_{\mathrm{fin}}^{-1}\cdot m_{\mathbb{R}}\left(W\right)\quad\text{and}\quad d^{\ast}\left(u\cdot\mathfrak{F}\left(W\right)-u\cdot\mathfrak{F}\left(W\right)\right)=2\cdot\left\Vert u\right\Vert_{\mathrm{fin}}^{-1}\cdot m_{\mathbb{R}}\left(W\right).
\end{equation}

The next theorem is a continuous analogue of~\cite[Theorem~1.16]{BjFi2019} and~\cite[Theorem~1.8]{BjFi2021}.

\begin{mainthm}\label{mthm:farey}
Let \(P\subset\mathbb{A}_{\mathrm{fin}}\) be such that \(P-P\) is uniformly discrete. Then
\[d^{\ast}\left(P-P\right)\geq 2\cdot d^{\ast}\left(P\right).\]
Moreover, if equality occurs so that \(d^{\ast}\left(P-P\right)=2\cdot d^{\ast}\left(P\right)>0\), then there exist \(u\in\mathbb{Z}_{\mathrm{fin}}\) and a compact interval \(W\subset\mathbb{R}\) such that
\[P-P\supseteq u\cdot\mathfrak{F}\left(W\right)-u\cdot\mathfrak{F}\left(W\right)\]
and
\[d^{\ast}\left(P-P\right)=d^{\ast}\left(u\cdot\mathfrak{F}\left(W\right)-u\cdot\mathfrak{F}\left(W\right)\right)=2\cdot\left\Vert u\right\Vert_{\mathrm{fin}}^{-1}\cdot m_{\mathbb{R}}\left(W\right).\]
\end{mainthm}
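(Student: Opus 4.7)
The plan is to combine the transverse Furstenberg correspondence principle of~\cite{BjFi2024} with Theorems~\ref{mthm:mineq} and~\ref{mthm:mineqext}, applied to $G = \mathbb{A}_{\mathrm{fin}}$, which is a cosolenoidal class~$\mathcal{Q}$ group. The correspondence is the bridge that converts the density quantities $d^{\ast}(P)$ and $d^{\ast}(P-P)$ into the intensity and intersection covolume of a cross section in an ergodic transverse $\mathbb{A}_{\mathrm{fin}}$-space, where our main inequalities can be invoked.

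For the inequality, the correspondence principle applied to $P$ (with $P-P$ uniformly discrete) should produce an ergodic transverse $\mathbb{A}_{\mathrm{fin}}$-space $(X,\mu,Y)$ satisfying $\iota_{\mu}(Y) = d^{\ast}(P)$ and $\Lambda_{Y} \subseteq P - P$ (so in particular $\Lambda_{Y}$ is uniformly discrete), together with the key transverse estimate $d^{\ast}(P-P) \geq I_{\mu}(Y)$. Theorem~\ref{mthm:mineq} then gives $I_{\mu}(Y) \geq 2\cdot\iota_{\mu}(Y)$, and chaining these inequalities yields
\[
d^{\ast}(P-P) \;\geq\; I_{\mu}(Y) \;\geq\; 2 \cdot \iota_{\mu}(Y) \;=\; 2 \cdot d^{\ast}(P),
\]
which is the first assertion.

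For the equality case, suppose $d^{\ast}(P-P) = 2 \cdot d^{\ast}(P) > 0$. Then the chain above is forced to be tight, so $I_{\mu}(Y) = 2 \cdot \iota_{\mu}(Y) > 0$ with $\Lambda_{Y}$ uniformly discrete, meeting the hypothesis of Theorem~\ref{mthm:mineqext}. The latter produces a lattice $\Lambda < \mathbb{A}_{\mathrm{fin}} \times \mathbb{R}$, a compact interval $W \subset \mathbb{R}$, and a transverse $\mathbb{A}_{\mathrm{fin}}$-factor $(X,\mu,Y) \to (\Lambda\backslash(\mathbb{A}_{\mathrm{fin}} \times \mathbb{R}),\, m_{\Lambda\backslash(\mathbb{A}_{\mathrm{fin}} \times \mathbb{R})},\, Y_{W})$. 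Because the definition $Y_{x} = Z_{\phi(x)}$ forces transverse factors to preserve the return-times set, one has $\Lambda_{Y} = \Lambda_{Y_{W}} = \operatorname{proj}_{\mathbb{A}_{\mathrm{fin}}}\!\bigl(\Lambda \cap (\mathbb{A}_{\mathrm{fin}} \times (W - W))\bigr)$; combined with $\Lambda_{Y} \subseteq P - P$ from the correspondence, this gives $P - P \supseteq \Lambda_{Y_{W}}$. A small normalization step identifies any such lattice with $u \cdot \mathbb{Q}$ for a suitable $u \in \mathbb{Z}_{\mathrm{fin}}$ (after rescaling the $\mathbb{R}$-coordinate, which only rescales the window into another compact interval); under this identification $\Lambda_{Y_{W}}$ becomes $u \cdot \mathfrak{F}(W) - u \cdot \mathfrak{F}(W)$, and the density formula~\eqref{eq:densff} delivers $d^{\ast}(P-P) = 2 \cdot \|u\|_{\mathrm{fin}}^{-1} \cdot m_{\mathbb{R}}(W)$, completing the characterization.

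The main obstacle is the first step: installing the transverse Furstenberg correspondence principle in the adelic setting, so that $\iota_{\mu}(Y)$ matches $d^{\ast}(P)$ and $I_{\mu}(Y)$ bounds $d^{\ast}(P-P)$ from above, all compatibly with the strong F\o lner sequences on $\mathbb{A}_{\mathrm{fin}}$ defining the Banach density. This is where I expect the bulk of the technical work to lie — verifying that the intensity and intersection covolume arising from the correspondence realize the prescribed density quantities in a non-$\sigma$-compact but well-behaved direct-limit group like $\mathbb{A}_{\mathrm{fin}}$. Once the correspondence is in place, the remaining steps are a direct application of the previously established main theorems together with a mild classification of lattices in $\mathbb{A}$ commensurable with $\mathbb{Q}$.
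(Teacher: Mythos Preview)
Your plan matches the paper's: the transverse Furstenberg correspondence (packaged there as Proposition~\ref{prop:furst}) supplies an ergodic transverse $\mathbb{A}_{\mathrm{fin}}$-space $(\Omega_P^\times,\mu,Y_P)$ with $\iota_\mu(Y_P)=d^\ast(P)$ and $d^\ast(P-P)\geq I_\mu(Y_P)$, Theorem~\ref{mthm:mineq} gives the inequality, and in the equality case Theorem~\ref{mthm:mineqext} together with the lattice classification of Example~\ref{exm:adeles2} identifies the cut-and-project factor as a dilated Farey system. You have also correctly located the technical weight in establishing the upper bound $d^\ast(P-P)\geq I_\mu(Y_P)$.

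One step needs care. You deduce $P-P\supseteq u\cdot\mathfrak{F}(W)-u\cdot\mathfrak{F}(W)$ from the assertion $\Lambda_Y=\Lambda_{Y_W}$. But the transverse-factor condition $Y_x=Z_{\phi(x)}$ is only asserted for $\mu$-a.e.\ $x$, so equality of the \emph{global} return-times sets is not automatic; routing through $\Lambda_{Y_W}=\operatorname{proj}_{\mathbb{A}_{\mathrm{fin}}}\bigl(\Gamma\cap(\mathbb{A}_{\mathrm{fin}}\times(W-W))\bigr)$ at best recovers the inclusion up to finitely many exceptional points. The paper sidesteps this by working pointwise: one selects a single $P'\in Y_P\cap\phi^{-1}(Y_{W_o})$ where the transverse condition holds, uses that $(Y_P)_{P'}=P'$ in the hull, and computes $(Y_{W_o})_{\phi(P')}$ to be exactly $u\cdot\mathfrak{F}(W)$ for a suitably translated and rescaled interval $W$ depending on $\phi(P')$. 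Then the hull inclusion $P'-P'\subseteq P-P$ from~\cite{BjFi2024} delivers the desired containment on the nose. This is a minor repair of your argument, not a change of strategy.
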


\section{Class \(\mathcal{Q}\) and cosolenoidal groups}

In this section, we introduce two classes of lcsc abelian groups that will be central to our work, along with their basic properties and examples. Since this is a purely group theoretic topic, the proofs are deferred to Appendix~\ref{app:classcoss}.

\subsection{Class \(\mathcal{Q}\)}\label{sct:classQ}

\begin{defn}
\hypertarget{classQ}{}
The {\bf class \(\mathcal{Q}\)} consists of all lcsc abelian groups satisfying the following properties:
\begin{enumerate}
    \item The group is totally disconnected, non-discrete and non-compact.
    \item The (Pontryagin) dual of the group is torsion-free.\footnote{For instance, divisible group has torsion-free dual, but the converse is not always true; see~\cite[Theorem (24.23)]{HewittRoss}.}
\end{enumerate}
\end{defn}

\begin{exm}
For a prime \(p\), the \(p\)-adic numbers \(\mathbb{Q}_{p}\) is totally disconnected, self-dual and torsion-free, and hence is class \(\mathcal{Q}\).
\end{exm}

The following example, while may be less significant from a dynamical standpoint, is still noteworthy.

\begin{exm}
Consider the rationals \(\mathbb{Q}\) and the Pr\"{u}fer group \(\mathbb{Z}\left(p^{\infty}\right)\coloneqq\mathbb{Z}_{p}\backslash\mathbb{Q}_{p}\) for a prime \(p\), both are discrete divisible groups and hence have compact torsion-free duals~\cite[Theorem (24.25)]{HewittRoss}.\footnote{The only discrete abelian groups having torsion-free dual are direct sums of \(\mathbb{Q}\) and \(\mathbb{Z}\left(p^{\infty}\right)\) for primes \(p\)~\cite[Theorem A1.42, Corollary 8.5]{hofmann2006structure}.} Therefore, \(\mathbb{Q}_{p}\times \mathbb{Q}\) and \(\mathbb{Q}_{p}\times\mathbb{Z}\left(p^{\infty}\right)\) are both class \(\mathcal{Q}\).
\end{exm}

The following key properties of class \(\mathcal{Q}\) groups are the primary source of our interest in this class. We recall that a {\bf compactification} of an lcsc abelian group \(G\) is a pair \(\left(K,\tau\right)\), consisting of a compact abelian group \(K\) and a continuous homomorphism \(\tau:G\to K\), such that \(\tau\left(G\right)\) is dense in \(K\).

\begin{prop}\label{prop:classqgrp}
For every \hyperlink{classQ}{class \(\mathcal{Q}\)} group \(G\) the following properties hold:
\begin{enumerate}
    \item \(G\) admits a local base at the identity consisting of compact open subgroups.
    \item \(G\) admits no closed cocompact proper subgroup (and in particular no lattices).
    \item Every compactification of \(G\) is connected.
\end{enumerate}
\end{prop}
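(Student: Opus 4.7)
Parts (1) and (3) admit quick arguments. For (1), since $G$ is totally disconnected locally compact, van Dantzig's theorem immediately provides a local base at the identity consisting of compact open subgroups. For (3), let $\tau:G\to K$ be a compactification. Density of $\tau(G)$ ensures that the dual map $\widehat{\tau}:\widehat{K}\to\widehat{G}$ is injective, because any character of $K$ trivial on $\tau(G)$ extends by continuity to the trivial character of $K$. Since $\widehat{G}$ is torsion-free, so is its subgroup $\widehat{K}$. For a compact abelian group the torsion-freeness of the discrete dual is equivalent to connectedness, so $K$ is connected.

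The substance of the proposition is (2), which I would prove by contradiction using Pontryagin duality and then passing to a compact open subgroup. Suppose $H<G$ were a proper closed cocompact subgroup; then $G/H$ is a nontrivial compact abelian group whose dual $H^{\perp}\subset\widehat{G}$ is a nontrivial discrete subgroup. Pick any $\chi\in H^{\perp}\setminus\{0\}$. Since $\chi$ factors through the compact group $G/H$, the image $\chi(G)$ is a compact subgroup of $\mathbb{T}$. The compact subgroups of $\mathbb{T}$ are the finite cyclic groups and $\mathbb{T}$ itself, and the finite case would force $\chi$ to have finite order, contradicting the torsion-freeness of $\widehat{G}$. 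Hence $\chi:G\twoheadrightarrow\mathbb{T}$ is surjective.

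Now I invoke (1): choose a compact open subgroup $U\leq G$. By the open mapping theorem for lcsc groups $\chi$ is open, so $\chi(U)$ is an open compact subgroup of $\mathbb{T}$. Connectedness of $\mathbb{T}$ rules out proper open subgroups, so $\chi(U)=\mathbb{T}$ and we obtain a continuous surjection $U\twoheadrightarrow\mathbb{T}$. Dually this embeds $\mathbb{Z}=\widehat{\mathbb{T}}$ into $\widehat{U}$. But $U$ is a compact totally disconnected abelian group, hence profinite, so $\widehat{U}$ is a (discrete) torsion group and cannot contain a copy of $\mathbb{Z}$, the desired contradiction. The main obstacle is exactly locating this contradiction: the torsion-free hypothesis on $\widehat{G}$ is what forces $\chi$ to hit all of $\mathbb{T}$, and one must then exploit the tension between the surjection $G\twoheadrightarrow\mathbb{T}$ and the total disconnectedness of $G$ by descending to a compact open subgroup.
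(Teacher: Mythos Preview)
Your proof is correct. Parts (1) and (3) match the paper's arguments essentially verbatim.

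For (2), the paper takes a shorter route than you do, though the underlying ingredients are the same. Rather than picking a single character and chasing it through the open mapping theorem down to a compact open subgroup, the paper observes directly that the quotient \(G/H\) is itself compact and totally disconnected: the images of compact open subgroups of \(G\) form a base of compact open subgroups in \(G/H\). Then the standard duality fact (compact totally disconnected \(\Leftrightarrow\) torsion dual) applied to \(G/H\) says \(H^{\perp}=\widehat{G/H}\) is torsion; sitting inside the torsion-free \(\widehat{G}\), it must be trivial, so \(H=G\), contradicting properness. Your argument reaches the same contradiction but relocates it to the compact open \(U\): you force \(\chi(U)=\mathbb{T}\) and then invoke ``profinite \(\Rightarrow\) torsion dual'' for \(U\) rather than for \(G/H\). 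In effect you are unwinding the paper's one-line appeal to duality by hand. This buys you nothing extra here, but it is a perfectly sound alternative and makes the role of total disconnectedness slightly more explicit.
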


Recall that the {\bf restricted direct product} (also called \emph{local direct product}), of a sequence \(\left(G_{n},K_{n}\right)_{n\in\mathbb{N}}\) of lcsc abelian groups with compact open subgroups \(K_{n}<G_{n}\), is the group
\[\prod\nolimits_{n\in\mathbb{N}}^{\prime}\left(G_{n},K_{n}\right)\]
of sequences \(\left(g_{n}\right)_{n\in\mathbb{N}}\in \prod\nolimits_{n\in\mathbb{N}}G_{n}\) with \(g_{n}\in K_{n}\) for all but finitely many \(n\)'s. It becomes an lcsc group with the restricted product topology, whose local base at the identity is given by sets of the form \(\prod\nolimits_{n\in F}U_{n}\times\prod\nolimits_{n\in \mathbb{N}\backslash F}K_{n}\), for some finite set \(F\subset\mathbb{N}\) and some open sets \(U_{n}\subseteq K_{n}\) for \(n\in F\)~\cite[(6.16)]{HewittRoss}.

\begin{prop}\label{prop:classqclass}
\hyperlink{classQ}{Class \(\mathcal{Q}\)} is closed under finite products and, more generally, under restricted direct products with respect to arbitrary choice of compact open subgroups.
\end{prop}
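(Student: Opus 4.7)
The finite product case is essentially immediate: a finite product of totally disconnected (respectively non-discrete, non-compact) lcsc abelian groups enjoys the same property, and $\widehat{G_1\times\cdots\times G_n}\cong \widehat{G_1}\times\cdots\times\widehat{G_n}$ is torsion-free as a finite product of torsion-free groups. I therefore focus on the restricted direct product case. Let $(G_n,K_n)_{n\in\mathbb{N}}$ be a sequence of \hyperlink{classQ}{class $\mathcal{Q}$} groups with compact open subgroups $K_n<G_n$, and set $G\coloneqq\prod\nolimits_{n\in\mathbb{N}}^{\prime}(G_n,K_n)$. The plan is to verify the two defining properties of class $\mathcal{Q}$ in turn.

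For the topological conditions I would start from the explicit local base at $e_G$ recalled just before the statement, consisting of sets $\prod_{n\in F}U_n\times \prod_{n\notin F}K_n$ with $F\subset\mathbb{N}$ finite and $U_n\subseteq K_n$ open. By Proposition~\ref{prop:classqgrp}(1) applied inside each $K_n$, one may refine the $U_n$ to be compact open subgroups of $K_n$, so every such basic neighborhood is itself a compact open subgroup of $G$; this simultaneously yields total disconnectedness of $G$ and a local base at $e_G$ by compact open subgroups. Non-discreteness follows because each $K_n$ is non-discrete: a discrete compact group is finite, and a finite open $K_n$ would force $G_n$ to be discrete, contradicting the class $\mathcal{Q}$ assumption on $G_n$. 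Non-compactness follows because $K_n\neq G_n$ for every $n$ (since $G_n$ is non-compact while $K_n$ is compact), so the quotient $G/\prod_n K_n\cong \bigoplus_n G_n/K_n$ is an infinite discrete group, hence non-compact.

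For the dual, I would invoke the classical Pontryagin duality identification $\widehat{G}\cong \prod\nolimits_{n\in\mathbb{N}}^{\prime}(\widehat{G_n},K_n^{\perp})$, where $K_n^{\perp}<\widehat{G_n}$ denotes the annihilator of $K_n$; this is a compact open subgroup of $\widehat{G_n}$ because $K_n$ is open compact. Given $\chi=(\chi_n)\in\widehat{G}$ with $k\chi=0$ for some positive integer $k$, one obtains $k\chi_n=0$ in $\widehat{G_n}$ for every $n$, and torsion-freeness of each $\widehat{G_n}$ forces $\chi_n=0$, hence $\chi=0$. The only non-routine ingredient of the entire argument is this duality identification together with the compatibility of annihilators with open compact subgroups; once these are in hand, the rest is formal bookkeeping.
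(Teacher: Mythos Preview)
Your proof is correct and follows essentially the same approach as the paper: both verify the topological conditions directly and then invoke the Pontryagin duality identification \(\widehat{\prod_{n}'(G_{n},K_{n})}\cong\prod_{n}'(\widehat{G_{n}},\operatorname{Ann}_{G_{n}}(K_{n}))\) (citing Hewitt--Ross (23.33)) to conclude torsion-freeness of the dual coordinatewise. The paper's own proof is a brief sketch that declares the topological conditions ``easy to see'' and states only the duality formula; your version simply fills in the details (the non-discreteness and non-compactness arguments, and why the annihilators are compact open), so there is no substantive difference.
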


The observation that class \(\mathcal{Q}\) is preserved under restricted direct products provides us with the following example that is important to our Theorem~\ref{mthm:farey}.

\begin{exm}\label{exm:adeles}
The group of finite adeles, given as the restricted direct product
\[\mathbb{A}_{\mathrm{fin}}=\prod\nolimits_{p\text{ prime}}^{\prime}\left(\mathbb{Q}_{p},\mathbb{Z}_{p}\right),\]
is totally disconnected, self-dual and torsion-free, and hence is class \(\mathcal{Q}\).
\end{exm}

\subsection{Cosolenoidal groups}\label{sct:coso}

Within class \(\mathcal{Q}\) groups, which admit no lattices, there is a subclass of groups that are closer to admit a lattice in a precise sense as follows. We define this for general abelian groups, but in practice our focus is on class \(\mathcal{Q}\). The term \emph{cosolenoidal} is motivated by Example~\ref{exm:qp} below.

\begin{defn}
Let \(G\) be an lcsc abelian group.  
We call \(G\) \textbf{cosolenoidal} if \(G\times\mathbb{R}\) admits a lattice while \(G\) itself does not. More generally, we call \(G\) \textbf{$d$-cosolenoidal}, for \(d\in\mathbb{N}\), if \(G \times \mathbb{R}^{d}\) admits a lattice but \(G \times \mathbb{R}^{d-1}\) does not.
\end{defn}

The role of \(\mathbb{R}^{d}\) in the definition of cosolenoidality is universal among abelian groups:

\begin{prop}\label{prop:cosoH}
Let \(G\) be an lcsc abelian group. If there exists an lcsc abelian group \(H\) such that \(G\times H\) admits a lattice, then \(G\) is \(d\)-cosolenoidal for some \(d\in\mathbb{N}\).
\end{prop}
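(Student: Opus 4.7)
The plan is to apply the Pontryagin--van Kampen structure theorem to $H$ to extract its maximal vector subgroup $\mathbb{R}^{b}$, quotient by a compact open subgroup of the remainder, and then intersect the resulting lattice with the closed subgroup $G\times\mathbb{R}^{b}$ to descend to a lattice there.

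I would begin by writing $H\cong\mathbb{R}^{b}\oplus H_{1}$, where $b=\dim V(H)$ is finite (local compactness bounds the vector part of any lcsc abelian group) and $H_{1}$ contains a compact open subgroup $K$. Writing $L=G\times H=G\times\mathbb{R}^{b}\times H_{1}$, the given lattice $\Gamma<L$ is automatically uniform (a classical fact for lcsc abelian groups). Setting $\tilde K:=\{e_{G}\}\times\{0\}\times K$, a compact subgroup of $L$, the image $\bar\Gamma$ of $\Gamma$ in $L/\tilde K\cong G\times\mathbb{R}^{b}\times D$, where $D:=H_{1}/K$ is countable discrete, is again a uniform lattice: closedness of $\Gamma+\tilde K$ follows from $\Gamma$ closed and $\tilde K$ compact, discreteness uses finiteness of $\Gamma\cap\tilde K$ together with a standard separation argument, and cocompactness comes from the continuous surjection $L/\Gamma\twoheadrightarrow L/(\Gamma+\tilde K)$.

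The central step is the following claim: \emph{if $\Lambda$ is a uniform lattice in $A\times D$ with $A$ lcsc abelian and $D$ countable discrete, then $\Lambda\cap A$ is a uniform lattice in $A$}. Discreteness is immediate, so only cocompactness requires work. Take a compact fundamental set $F$ for $\Lambda$; then $p_{D}(F)\subset D$ is finite, say $\{d_{1},\ldots,d_{m}\}$, and for each $i$ with $-d_{i}\in p_{D}(\Lambda)$ fix $\lambda_{i}\in\Lambda$ with $p_{D}(\lambda_{i})=-d_{i}$. Given $a\in A$, writing $(a,0)=\lambda+f$ with $\lambda\in\Lambda$ and $f\in F$ forces $p_{D}(\lambda)=-d_{i}$ for some $i$; then $\lambda-\lambda_{i}\in\Lambda\cap A$, so $a$ lies in $(\Lambda\cap A)+C$ for the compact set $C:=\bigcup_{i}(p_{A}(\lambda_{i})+p_{A}(F))$. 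Applying this with $A=G\times\mathbb{R}^{b}$ and $\Lambda=\bar\Gamma$ yields a uniform lattice in $G\times\mathbb{R}^{b}$.

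Consequently, the set of non-negative integers $d$ for which $G\times\mathbb{R}^{d}$ admits a lattice is nonempty (containing $b$), and its minimum is the $d\in\mathbb{N}$ sought for $d$-cosolenoidality. The only non-routine point is the cocompactness step in the central claim, which requires the finite-fiber bookkeeping outlined above; all other ingredients are standard structural facts about lcsc abelian groups together with the general preservation of lattices under quotients by compact subgroups.
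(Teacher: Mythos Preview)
Your proposal is correct and follows essentially the same route as the paper: apply the structure theorem to write $H\cong\mathbb{R}^{b}\times H_{1}$ with $H_{1}$ admitting a compact open subgroup $K$, quotient by $K$ to land in $G\times\mathbb{R}^{b}\times D$ with $D$ discrete, and then prove that a (uniform) lattice in $A\times D$ intersected with $A\times\{e_{D}\}$ gives a lattice in $A$ via the same finite-fiber bookkeeping over a compact fundamental set. The paper isolates the quotient-by-compact step and the $A\times D$ step as two separate lemmas, but the arguments and their order are the same as yours.
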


Next, we introduce a technical tool that will be useful in computing the degree of cosolenoidality. The reader should bear in mind the case \(G=\mathbb{Q}_{p}^{d}\) which will be discussed later in Example~\ref{exm:qp2}. Let us use the following convenient terminology: a finitely generated group is {\bf \(d\)-generated}, for some \(d\in\mathbb{N}\), if it admits a generating set of cardinality at most \(d\).

\begin{prop}\label{prop:cyclicity}
Let \(G\) be a \hyperlink{classQ}{class \(\mathcal{Q}\)} group and \(d\in\mathbb{N}\) such that \(G\times\mathbb{R}^{d}\) admits a lattice. Then for all compact open subgroups \(V<U<G\), either \(U\) admits a nontrivial finite subgroup, or \(V\backslash U\) is \(d\)-generated.
\end{prop}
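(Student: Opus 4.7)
The plan is to reduce to the case where $U$ is torsion-free, intersect the given lattice with $U \times \mathbb{R}^{d}$ to produce a lattice in a compact-by-Euclidean group, and use the torsion-free hypothesis to pin down its structure as $\mathbb{Z}^{d}$.

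Assume that $U$ admits no nontrivial finite subgroup, and let $\Gamma < G \times \mathbb{R}^{d}$ be a lattice. First I would show that $\pi_{G}(\Gamma)$ is dense in $G$. If not, $H := \overline{\pi_{G}(\Gamma)}$ is a proper closed subgroup, and by Proposition~\ref{prop:classqgrp}(2), $G/H$ is non-compact, hence of infinite Haar measure. Since $\Gamma \subseteq H \times \mathbb{R}^{d}$, the standard Haar disintegration formula gives $\mathrm{covol}_{G \times \mathbb{R}^{d}}(\Gamma) = \mathrm{covol}_{H \times \mathbb{R}^{d}}(\Gamma) \cdot m_{G/H}(G/H) = \infty$, contradicting that $\Gamma$ is a lattice. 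Since $U$ is open and $G/U$ discrete, density upgrades to $\pi_{G}(\Gamma) + U = G$, and hence $\Gamma + (U \times \mathbb{R}^{d}) = G \times \mathbb{R}^{d}$. Setting $\Lambda := \Gamma \cap (U \times \mathbb{R}^{d})$, the induced continuous bijection $(U \times \mathbb{R}^{d})/\Lambda \to (G \times \mathbb{R}^{d})/\Gamma$ is open (since $U \times \mathbb{R}^{d}$ is open in $G \times \mathbb{R}^{d}$), hence a homeomorphism, so $\Lambda$ is a lattice in $U \times \mathbb{R}^{d}$.

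Next I would analyze $\Lambda$ using the compact-by-Euclidean structure. The kernel $\Lambda \cap (U \times \{0\})$ is a discrete subgroup of the compact group $U$, hence finite, hence trivial by the torsion-free hypothesis on $U$. Therefore $\pi_{\mathbb{R}^{d}}|_{\Lambda}$ is injective and embeds $\Lambda$ as a discrete subgroup of $\mathbb{R}^{d}$ of some rank $k \leq d$. Finite covolume of $\Lambda$ in $U \times \mathbb{R}^{d}$ forces $k = d$ (otherwise a second disintegration across $\mathbb{R}^{d}/\mathrm{span}_{\mathbb{R}}(\pi_{\mathbb{R}^{d}}(\Lambda))$ again gives infinite covolume), so $\Lambda \cong \mathbb{Z}^{d}$. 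Finally, density of $\pi_{G}(\Gamma)$ in $G$ together with openness of $U$ implies $\pi_{G}(\Lambda) = \pi_{G}(\Gamma) \cap U$ is dense in $U$; since $V$ is open in $U$, $\pi_{G}(\Lambda)$ meets every coset of $V$ and hence surjects onto the finite group $V \backslash U$. As $\pi_{G}(\Lambda)$ is a quotient of $\Lambda \cong \mathbb{Z}^{d}$, it is $d$-generated, and so is $V \backslash U$.

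The main obstacle is the first step---establishing that $\Lambda$ is itself a lattice in $U \times \mathbb{R}^{d}$. This is exactly where the class $\mathcal{Q}$ hypothesis enters essentially, through Proposition~\ref{prop:classqgrp}(2) ruling out proper cocompact closed subgroups. Once $\Lambda$ is seen to be a lattice, the rest is standard structure theory of lattices in compact-by-Euclidean lcsc abelian groups.
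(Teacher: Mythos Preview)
Your proof is correct and follows essentially the same route as the paper: density of $\operatorname{proj}_{G}(\Gamma)$ via Proposition~\ref{prop:classqgrp}(2), intersection with $U\times\mathbb{R}^{d}$ to obtain a lattice $\Lambda$ there, use of the torsion-free hypothesis on $U$ to identify $\Lambda$ with a rank-$\le d$ discrete subgroup of $\mathbb{R}^{d}$, and surjection onto $V\backslash U$ (the paper packages the last steps as Lemmas~\ref{lem:cyclicity1} and~\ref{lem:cyclicity2}, but the content is identical). One small point to make explicit: discreteness of $\pi_{\mathbb{R}^{d}}(\Lambda)$ does not follow from injectivity of $\pi_{\mathbb{R}^{d}}|_{\Lambda}$ alone but from compactness of $U$ (the projection is proper, hence closed), which you are implicitly folding into ``standard structure theory.''
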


The following proposition is useful in classifying the lattices in \(G\times\mathbb{R}\) for certain class \(\mathcal{Q}\) groups \(G\). The reader should bear in mind the case of \(G=\mathbb{A}_{\mathrm{fin}}\) which will be discussed later in Example~\ref{exm:adeles2}.

\begin{prop}\label{prop:adlat}
Let \(G\) be a \hyperlink{classQ}{class \(\mathcal{Q}\)} group satisfying the following properties:
\begin{enumerate}
    \item \(G\) is torsion-free and divisible.
    \item There is a compact open subgroup \(U<G\) with no nontrivial finite subgroups, and \(U\backslash G\) is torsion.
\end{enumerate}
Let \(Q\) be the set of orders of elements in \(U\backslash G\). Then every lattice \(\Gamma< G\times\mathbb{R}\) is of the form
\[\Gamma=\mathbb{Z}\big[1/Q\big]\left(u,t\right)=\big\{ \big({\textstyle \frac{m}{q}}u,{\textstyle\frac{m}{q}}t\big):m\in\mathbb{Z},q\in Q\big\}<G\times\mathbb{R},\]
for some \(\left(u,t\right)\in U\times\mathbb{R}\).
\end{prop}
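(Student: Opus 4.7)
The plan is to identify $A := \pi_{G}(\Gamma) \subseteq G$ together with the homomorphism $\phi \colon A \to \mathbb{R}$ whose graph is $\Gamma$, and then to pin $A$ down using the algebraic hypotheses on $U$ and on $U \backslash G$.

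First I would show that $\pi_{G}$ is injective on $\Gamma$. If $N := \Gamma \cap (\{0\} \times \mathbb{R}) = \{0\} \times a_{0}\mathbb{Z}$ with $a_{0} > 0$, then $N$ is a closed normal subgroup of $G \times \mathbb{R}$ contained in $\Gamma$, so $\Gamma/N$ is a lattice in $(G \times \mathbb{R})/N = G \times (\mathbb{R}/a_{0}\mathbb{Z})$. Projecting further to $G$ has compact kernel $\mathbb{R}/a_{0}\mathbb{Z}$ and is therefore proper, so it preserves the lattice property and $\pi_{G}(\Gamma)$ becomes a lattice in $G$, contradicting Proposition~\ref{prop:classqgrp}(2). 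Next I would show $\Gamma_{U} := \Gamma \cap (U \times \mathbb{R}) \neq 0$: otherwise $A \cap U = 0$ forces $A$ to be discrete, and projecting a compact fundamental set $K$ for $\Gamma$ to $G$ yields $\pi_{G}(K) + A = G$, making the closed discrete subgroup $A$ cocompact in $G$, again contradicting Proposition~\ref{prop:classqgrp}(2). Given $\Gamma_{U} \neq 0$, the kernel of $\pi_{\mathbb{R}} \colon \Gamma_{U} \to \mathbb{R}$ is a discrete subgroup of the compact group $U \times \{0\}$, hence a finite subgroup of $U$; by the hypothesis on $U$ it is trivial, and $\pi_{\mathbb{R}}$ is injective on $\Gamma_{U}$. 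Its image is discrete in $\mathbb{R}$ (otherwise compactness of $U$ yields a convergent subsequence of $\Gamma_{U}$-elements contradicting discreteness of $\Gamma$), so $\Gamma_{U} = \mathbb{Z}(u, t)$ for some $u \in U$, $t > 0$. The same injectivity applied to the restriction of $\phi$ to $A \cap U$ gives $A \cap U = \mathbb{Z} u$.

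To pin down $A$, I would show $A + U = G$. Projecting a compact $K$ with $K + \Gamma = G \times \mathbb{R}$ to the discrete quotient $U \backslash G$ produces a finite set $S$ with $S + A/(A \cap U) = U \backslash G$, so $A/(A \cap U)$ has finite index in $U \backslash G$. Since $U \backslash G$ is divisible (as a quotient of the divisible group $G$), it admits no proper finite-index subgroup, and hence $A/(A \cap U) = U \backslash G$. For any $a' \in A$, letting $k \in Q$ be the order of the class of $a'$, we have $k a' = m u$ for some $m \in \mathbb{Z}$, and torsion-freeness and divisibility of $G$ rewrite this uniquely as $a' = \tfrac{m}{k} u$. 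Conversely, for each $k \in Q$ one picks $a' \in A$ whose class has order exactly $k$; minimality of $k$ forces $\gcd(m, k) = 1$, and B\'ezout then places $\tfrac{1}{k} u$ inside $\mathbb{Z} a' + \mathbb{Z} u \subseteq A$, so $\tfrac{n}{k} u \in A$ for every $n \in \mathbb{Z}$. Hence $A = \mathbb{Z}[1/Q]\cdot u$, and $\phi$ being a homomorphism with $\phi(u) = t$ forces $\phi(\tfrac{m}{q} u) = \tfrac{m}{q} t$, yielding $\Gamma = \mathbb{Z}[1/Q](u, t)$.

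The main obstacle I expect is the surjectivity step $A/(A \cap U) = U \backslash G$: the injectivity of $\pi_{G}$ and the discreteness of $\pi_{\mathbb{R}}(\Gamma_{U})$ follow fairly routinely from the class $\mathcal{Q}$ setup, but extracting the full $\mathbb{Z}[1/Q]$ structure requires combining cocompactness of $\Gamma$ with divisibility of $U \backslash G$ to promote a finite-index inclusion to an equality, and then leveraging torsion-freeness and divisibility of $G$ itself to interpret the integer relation $k a' = m u$ as a genuine fractional identity.
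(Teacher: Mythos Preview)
Your proposal is correct and takes essentially the same approach as the paper: both reduce to $\Gamma_{U}=\mathbb{Z}(u,t)$ via compactness of $U$ and the no-finite-subgroups hypothesis, then combine cocompactness of $\Gamma$ with divisibility (of $U\backslash G$ in your version, of $G$ itself in the paper's) to force surjectivity onto $U\backslash G$, and finish with B\'ezout. Your extra preliminaries---injectivity of $\pi_{G}|_{\Gamma}$ and nontriviality of $\Gamma_{U}$---are correct but unnecessary in the paper's packaging, which works with $\Gamma$ directly rather than through its graph representation.
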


We stress that Proposition~\ref{prop:adlat} does not guarantee that \(\mathbb{Z}\big[1/Q\big]\) is always a lattice in \(G\times\mathbb{R}\).

For the rest of this section we mention some important instances of cosolenoidal groups.

\begin{exm}\label{exm:qp}
For a prime \(p\), the class \(\mathcal{Q}\) group \(\mathbb{Q}_{p}\) is cosolenoidal, for there is the lattice
\[\mathbb{Z}\left[1/p\right]<\mathbb{Q}_{p}\times \mathbb{R},\]
where \(\mathbb{Z}[1/p]\) is embedded diagonally in \(\mathbb{Q}_{p}\times \mathbb{R}\). Indeed, one verifies that \(\mathbb{Z}\left[1/p\right]<\mathbb{Q}_{p}\times\mathbb{R}\) is discrete, with fundamental domain \(\mathbb{Z}_{p}\times\left[0,1\right)\). In fact, by Proposition~\ref{prop:adlat} every lattice in \(\mathbb{Q}_{p}\times\mathbb{R}\) is a dilation of \(\mathbb{Z}\left[1/p\right]\). The quotient space associated with \(\mathbb{Z}\left[1/p\right]<\mathbb{Q}_{p}\times\mathbb{R}\) is the \(p\)-solenoid (see~\cite[pp. 114-115]{HewittRoss}):
\[\mathbb{Z}\left[1/p\right]\backslash\left(\mathbb{Q}_{p}\times\mathbb{R}\right)\cong\mathbb{S}_{p}\coloneqq\varprojlim\big(\mathbb{T}\xleftarrow{z^{p}\mapsfrom z}\mathbb{T}\xleftarrow{z^{p}\mapsfrom z}\mathbb{T}\dashleftarrow\big)=\left\{ \left(z_{n}\right)_{n\in\mathbb{N}}\in\mathbb{T}^{\mathbb{N}}:\forall n\in\mathbb{N},\,\,z_{n}=z_{n+1}^{p}\right\}.\]
\end{exm}

Using Proposition~\ref{prop:cyclicity}, we can find \(d\)-cosolenoidal groups for arbitrary \(d\in\mathbb{N}\):

\begin{exm}\label{exm:qp2}
For \(d\in\mathbb{N}\) and \(p\) prime, the class \(\mathcal{Q}\) group \(\mathbb{Q}_{p}^{d}\) is \(d\)-cosolenoidal, for there is the lattice
\[\mathbb{Z}\left[1/p\right]^{d}<\mathbb{Q}_{p}^{d}\times\mathbb{R}^{d},\]
but there is no lattice in \(\mathbb{Q}_{p}^{d}\times\mathbb{R}^{d-1}\). Indeed, consider the compact open subgroups \(U\coloneqq\mathbb{Z}_{p}^{d}\) (which admits no nontrivial finite subgroups) and \(V\coloneqq\left(p\mathbb{Z}_{p}\right)^{d}\), whose quotient \(V\backslash U\) is isomorphic to \(\left(p\mathbb{Z}\backslash\mathbb{Z}\right)^{d}\) which is not \(d-1\)-generated. From Proposition~\ref{prop:cyclicity} it follows that \(\mathbb{Q}_{p}^{d}\times\mathbb{R}^{d-1}\) admits no lattice. Note however that \(\mathbb{Q}_{p}\times\mathbb{Q}_{q}\) for distinct primes \(p\) and \(q\) is cosolenoidal, for there is the lattice \(\mathbb{Z}\left[1/pq\right]<\mathbb{Q}_{p}\times\mathbb{Q}_{q}\times\mathbb{R}\).
\end{exm}

The following generalizes a construction of Meyer~\cite[Ch. II, \S11]{meyer1972algebraic}, who considered the case of \(G_{o}=\mathbb{Z}_{p}\) and \(U_{o}=p\mathbb{Z}_{p}\) in the following example.\footnote{Although \(\mathbb{Z}_{p}\) is compact and thus not class \(\mathcal{Q}\), our reasoning remains applicable in this case, since Proposition~\ref{prop:cyclicity} actually holds for all lcsc totally disconnected abelian groups with torsion-free dual, such as \(\prod\nolimits_{\mathbb{N}}\left(\mathbb{Z}_{p},p\mathbb{Z}_{p}\right)\).}

\begin{exm}\label{exm:Meyer}
Let \(G_{o}\) be any class \(\mathcal{Q}\) group with a compact open subgroup \(U_{o}<G_{o}\) admitting no nontrivial finite subgroups (such as \(G_{o}=\mathbb{Q}_{p}\) and \(U_{o}=\mathbb{Z}_{p}\)), and form the restricted direct product
\[G\coloneqq\prod\nolimits_{\mathbb{N}}^{\prime}\left(G_{o},U_{o}\right).\]
Then \(G\) is a class \(\mathcal{Q}\) group with the property that \(G\times H\) admits no lattice for any lcsc abelian group \(H\). In order to show this, by Proposition~\ref{prop:cosoH} it suffices to show that \(G\) is not \(d\)-cosolenoidal for any \(d\in\mathbb{N}\) (since \(G\) is class \(\mathcal{Q}\), \(G\) itself does not have a lattice). Indeed, pick some compact open proper subgroup \(V_{o}<U_{o}\) and, for an arbitrary \(d\in\mathbb{N}\), consider the compact open subgroups
\[U\coloneqq\prod\nolimits_{\mathbb{N}}U_{o}<G\quad\text{and}\quad V_{d}\coloneqq\big(\prod\nolimits_{1\leq n\leq d+1}V_{o}\big)\times\big(\prod\nolimits_{n>d+1}U_{o}\big)<G.\]
Since \(U_{o}\) admits no finite subgroup, neither does \(U\). Also, \(V_{d}\backslash U\cong\left(V_{o}\backslash U_{o}\right)^{d+1}\) is not \(d\)-generated. Since \(d\) is arbitrary, from Proposition~\ref{prop:cyclicity} it follows that \(G\) is not \(d\)-cosolenoidal.
\end{exm}

The following example is of special interest to us (see Section~\ref{sct:gff}).

\begin{exm}\label{exm:adeles2}
The class \(\mathcal{Q}\) group \(\mathbb{A}_{\mathrm{fin}}\) (recall Example~\ref{exm:adeles}) is cosolenoidal, for there is the lattice
\[\mathbb{Q}<\mathbb{A}_{\mathrm{fin}}\times\mathbb{R}=\mathbb{A},\]
where \(\mathbb{Q}\) is embedded diagonally into \(\mathbb{A}\), the adeles group (see~\cite[Ch.~VII, §2]{lang1994algebraic}). In fact, every lattice in \(\mathbb{A}\) is a dilation of \(\mathbb{Q}\). Indeed, \(\mathbb{A}_{\mathrm{fin}}\) is torsion-free and divisible, and its compact open subgroup
\[\mathbb{Z}_{\mathrm{fin}}\coloneqq\prod\nolimits_{p\text{ prime}}\mathbb{Z}_{p}<\mathbb{A}_{\mathrm{fin}},\]
admits no nontrivial finite subgroups. Additionally, \(\mathbb{Z}_{\mathrm{fin}}\backslash\mathbb{A}_{\mathrm{fin}}\cong\bigoplus_{p\text{ prime}}\mathbb{Z}_{p}\backslash\mathbb{Q}_{p}\) is torsion, and every prime arises as an order of its elements, thus the set \(Q\) as in Proposition~\ref{prop:adlat} contains all the primes. By the conclusion of Proposition~\ref{prop:adlat}, every lattice in \(\mathbb{A}_{\mathrm{fin}}\times\mathbb{R}\) is of the form \(\mathbb{Q}\left(u,t\right)\) for some \(\left(u,t\right)\in \mathbb{Z}_{\mathrm{fin}}\times\mathbb{R}\).
\end{exm}

\section{Cross sections, intensity and intersection covolume}\label{sct:intcovol}

Let us recall the well-known notion of cross sections, as well as the notion of intersection covolume of cross sections, which was defined and discussed extensively in~\cite{AvBjCuI}. In the following, we introduce one way of defining the intersection covolume, which is not necessarily the most intuitive, but is the most useful for the purposes of this work.

Let \(X\) be a Borel \(G\)-space. For a Borel set \(Y\subseteq X\) and \(x\in X\), the {\bf the return times set} is the set
\[Y_{x}\coloneqq\{g\in G:g.x\in Y\}.\]
Note that
\[Y_{g.x}=Y_{x}g^{-1}\text{ for all }g\in G\text{ and }x\in X.\]
A Borel set \(Y\subseteq X\) is called a {\bf cross section} if the return times set \(Y_{x}\) of every \(x\in X\) is nonempty (so that \(G.Y=X\)) and locally finite. 
The {\bf return times set} of \(Y\) itself is the set
\[\Lambda_{Y}\coloneqq \bigcup\nolimits_{y\in Y}Y_{y}=\{g\in G:g.Y\cap Y\neq\emptyset\}.\]
Note that \(e_{G}\in\Lambda_{Y}\) and that \(\Lambda_{Y}\) is symmetric: \(\Lambda_{Y}^{-1}=\Lambda_{Y}\). A cross section \(Y\) is said to be {\bf \(s\)-separated}, for some \(s\in\mathbb{N}\), if there exists an identity neighborhood \(U\subset G\) such that \(\Lambda_{Y}^{s}\cap U=\{e_{G}\}\). In this case we will refer to \(U\) as an \(s\){\bf-separating neighborhood} for \(Y\). For \(s=1\), we will call \(Y\) {\bf separated} and \(U\) a {\bf separating neighborhood}. For \(s=2\), we will call \(Y\) {\bf doubly separated}. Note that \(Y\) is doubly separated if and only if \(\Lambda_{Y}\) is a uniformly discrete set.

We now recall the construction of \emph{intersection spaces} following~\cite{bjorklund2025int,AvBjCuI}. Here we only consider intersections of order \(2\). For a Borel \(G\)-space \(X\), consider two actions of \(G\) on \(X\times X\); one is on the first coordinate,
\[g\first\left(x,x^{\prime}\right)=\left(g.x,x^{\prime}\right),\]
and the other is the diagonal action,
\[g\diag\left(x,x^{\prime}\right)=\left(g.x,g.x^{\prime}\right).\]
Define the Borel set
\[Y^{\left[2\right]}\coloneqq G\diag\left(Y\times Y\right)=\left\{\left(h.y,h.y^{\prime}\right):\left(y,y^{\prime}\right)\in Y\times Y\right\}.\]
Considering \(X\times X\) as a Borel \(G\)-space with respect to the action on the first coordinate, we obtain that \(Y^{\left[2\right]}\subset X\times X\) is a cross section to this action: to see that \(G\first Y^{\left[2\right]}=X\times X\), for every \(\left(x,x^{\prime}\right)\in X\times X\) pick \(g\in Y_{x}, g^{\prime}\in Y_{x^{\prime}}\) and let \(y\coloneqq g.x\in Y, y^{\prime}\coloneqq g^{\prime}.x^{\prime}\in Y\), so that
\[\left(x,x^{\prime}\right)=\left(g^{-1}.y,g^{\prime -1}.y^{\prime}\right)=g^{-1}g^{\prime}\first g^{\prime -1}\diag\left(y,y^{\prime}\right)\in G\first Y^{\left[2\right]}.\]
The return times set of points to the cross section \(Y^{\left[2\right]}\) are countable and take the form
\[Y_{\left(x,x^{\prime}\right)}^{\left[2\right]}=Y_{x^{\prime}}^{-1}Y_{x},\quad\left(x,x^{\prime}\right)\in X\times X,\]
and the return times set of \(Y^{\left[2\right]}\) itself is
\[\Lambda_{Y^{\left[2\right]}}=\Lambda_{Y}^{2}.\]
Therefore, \(Y\subset X\) is doubly separated precisely when \(Y^{\left[2\right]}\subset X\times X\) is separated.

Following the terminology in~\cite{AvBjCuI}, a {\bf transverse \(G\)-space} is a triplet \(\left(X,\mu,Y\right)\), where \(\left(X,\mu\right)\) is a probability preserving \(G\)-space and \(Y\subset X\) is a locally integrable cross section, namely
\[\int_{X}\left|Y_{x}\cap K\right|d\mu\left(x\right)<\infty\text{ for every compact set }K\subset G.\]
It is a basic fact that separated cross sections are always locally integrable (see~\cite[Lemma 5.2]{AvBjCuI}).

To define the intersection covolume we use the transverse measure construction (see~\cite[\S4]{AvBjCuI} and the references therein). For a Borel function \(f:G\times Y\to\left[0,+\infty\right]\), define the {\bf periodization} of \(f\) to be the Borel function
\[f_{X}:X\longrightarrow\left[0,+\infty\right],\quad f_{X}\left(x\right)\coloneqq\sum\nolimits_{g\in Y_{x}}f\left(g^{-1},g.x\right).\]

\begin{defn}
Let \(\left(X,\mu,Y\right)\) be a transverse \(G\)-space. The associated {\bf transverse measure}, denoted \(\mu_{Y}\), is the unique Borel measure on \(Y\) satisfying the {\bf Campbell theorem}:
\[m_{G}\otimes\mu_{Y}\left(f\right)=\mu\left(f_{X}\right),\]
for every Borel function \(f:G\times Y\to\left[0,+\infty\right]\). The {\bf intensity} of \(Y\) is defined by
\[\iota_{\mu}\left(Y\right)\coloneqq\mu_{Y}\left(Y\right)\in\left(0,+\infty\right].\]
When \(Y\) is separated, then for every sufficiently small identity neighborhood \(U\) in \(G\) and every Borel set \(A\subseteq X\), it holds that
\[m_{G}\left(U\right)\cdot\mu_{Y}\left(A\right)=\mu\left(U.A\right).\]
(see~\cite[Proposition 4.8]{AvBjCuI}). In particular, in this case \(\mu_{Y}\) is a finite measure and thus \(\iota_{\mu}\left(Y\right)<+\infty\).
\end{defn}

The general definition of the intersection covolume applies to all transverse \(G\)-spaces (see~\cite[\S7]{AvBjCuI}). However, here we focus only on doubly separated cross sections, where~\cite[Proposition 6.2]{AvBjCuI} provides us with a different formula for the intersection covolume which will be extremely useful for us.

\begin{defn}\label{dfn:intvol}
Let \(\left(X,\mu,Y\right)\) be a transverse \(G\)-space. The associated {\bf intersection covolume} is
\[I_{\mu}\left(Y\right)\coloneqq\iota_{\mu\otimes\mu}\big(Y^{\left[2\right]}\big)=\left(\mu\otimes\mu\right)_{Y^{\left[2\right]}}\big(Y^{\left[2\right]}\big)\in\left(0,+\infty\right).\]
Here, \(\left(\mu\otimes\mu\right)_{Y^{\left[2\right]}}\) is the transverse measure associated with the transverse \(G\)-space \(\left(X\times X,\mu\otimes\mu,Y^{\left[2\right]}\right)\), with the action of \(G\) on the first coordinate.
\end{defn}

\section{Cut--and--project spaces: intensity and intersection covolume}

The following construction of transverse spaces goes back to Meyer~\cite{meyer1972algebraic} (see also~\cite{BaakeGrimm2013}). Here, we present this construction and compute its intensity and intersection covolume, proving Theorem~\ref{mthm:cap}.

A (abelian) {\bf cut--and--project scheme} is a triplet \(\left(G,H;\Gamma\right)\) consisting of lcsc abelian groups \(G\) and \(H\), and a lattice \(\Gamma<G\times H\), such that, for the projections \(\operatorname{proj}_{G}\) and \(\operatorname{proj}_{H}\) defined on \(G\times H\),
\[\operatorname{proj}_{G}\mid_{\Gamma}:\Gamma\longrightarrow G\text{ is injective and }\operatorname{proj}_{H}\left(\Gamma\right)<H\text{ is dense.}\]
With every cut--and--project scheme \(\left(G,H;\Gamma\right)\), there is associated a probability preserving \(G\)-space, called {\bf cut--and--project \(G\)-space}, defined as the compact abelian group
\[\left(\Xi,\xi\right)\coloneqq\left(\Gamma\backslash\left(G\times H\right),m_{\Gamma\backslash\left(G\times H\right)}\right),\]
where \(\xi\) is the Haar probability measure, with the Borel action of \(G\) on the first coordinate,
\[g_{o}.x=\Gamma\left(g,h\right)\left(g_{o}^{-1},e_{H}\right)=\Gamma\left(gg_{o}^{-1},h\right)\text{ for }x\coloneqq \Gamma\left(g,h\right)\in\Xi.\]
The measure \(\xi\) is the pushforward of \(m_{G}\otimes m_{H}\) along the quotient map \(G\times H\to\Xi\), normalized by \(\operatorname{covol}_{G\times H}\left(\Gamma\right)\), so that \(\xi\) is given by the Weil integration formula,
\begin{equation}\label{eq:xiformula}
\iint\nolimits_{G\times H}\varphi\left(g,h\right)dm_{G}\otimes m_{H}\left(g,h\right)=\operatorname{covol}_{G\times H}\left(\Gamma\right)\cdot\int_{\Xi}\sum\nolimits_{\left(\gamma,\eta\right)\in\Gamma}\varphi\left(\gamma g,\eta h\right)d\xi\left(\Gamma\left(g,h\right)\right),
\end{equation}
for every nonnegative Borel function \(\varphi:G\times H\to\mathbb{R}_{\geq 0}\). It was shown in~\cite[\S3.2]{BjHaPoI} that \(\Xi\) is uniquely ergodic already as a Borel \(G\)-space (and even among the stationary measures).

Recall that \(W\) is {\bf Jordan measurable} if \(m_{H}\left(\overline{W}\backslash W^{o}\right)=0\), or equivalently, if \(\mathbf{1}_{W}\) is Riemann integrable. Natural cross sections for cut--and--project \(G\)-spaces arise from the following sets in \(H\).

\begin{defn}\hypertarget{window}{}
A {\bf window} in an lcsc group \(H\) is a relatively compact Jordan measurable set \(W\subset H\),\footnote{By definition, \(W\subset H\) is Jordan measurable when \(m_{H}\left(\partial{W}\right)=0\), where \(\partial{W}\) denotes the topological boundary of \(W\).} such that \(0<m_{H}\left(W\right)+\infty\).
\end{defn}

Let \(\left(G,H;\Gamma\right)\) be a cut--and--project scheme and \(W\subset H\) a window. In the associated cut--and--project \(G\)-space \(\left(\Xi,\xi\right)\), define the Borel set
\[Y_{W}\coloneqq\left\{\Gamma\left(e_{G},w\right):w\in W\right\} \subset\Xi.\]
It can be verified that the return times sets of \(Y_{W}\) are of the form
\[\left(Y_{W}\right)_{x}=\mathrm{proj}_{G}\left(\Gamma\cap\left(G\times h_{o}W^{-1}\right)\right)^{-1}g_{o},\quad x=\Gamma\left(g_{o},h_{o}\right).\]
It is a abasic fact that \(\Lambda_{Y_{W}}\) is uniformly discrete (see~\cite[\S7.1]{BaakeGrimm2013},~\cite[\S2.3]{BjHaAL}) so that \(Y_{W}\) is a doubly separated cross section. Thus, \(\left(\Xi,\xi,Y_{W}\right)\) is a transverse \(G\)-space, called {\bf transverse cut--and--project \(G\)-space}.

For the rest of this section we will prove Theorem~\ref{mthm:cap} and Corollary~\ref{cor:cap}, starting with some lemmas.

\begin{lem}\label{lem:capform}
For every cut--and--project \(G\)-space \(\left(\Xi,\xi,Y_{W}\right)\) as above, the transverse measure \(\xi_{Y_{W}}\) satisfies
\[\mathrm{covol}_{G\times H}\left(\Gamma\right)\cdot\xi_{Y_{W}}\left(\psi\right)=\int_{W}\psi\left(\Gamma\left(e_{G},h\right)\right)dm_{H}\left(h\right),\]
for every Borel function \(\psi:\Xi\to\left[0,+\infty\right]\).
\end{lem}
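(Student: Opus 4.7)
The plan is to derive the formula directly from the defining Campbell theorem of the transverse measure $\xi_{Y_W}$, unfolded through the Weil integration formula \eqref{eq:xiformula}, reducing the identity to a Fubini computation.

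First I would fix an arbitrary Borel $\psi:\Xi\to [0,+\infty]$ and introduce an auxiliary nonnegative Borel function $\varphi:G\to\mathbb{R}_{\geq 0}$ with $\int_{G}\varphi\,dm_{G}=1$. Applying the Campbell theorem to $f(g,y)\coloneqq \varphi(g)\psi(y)$ yields
\[\xi_{Y_{W}}(\psi) = (m_{G}\otimes \xi_{Y_{W}})(f) = \xi(f_{\Xi}),\]
so the task is reduced to evaluating the periodization integral $\xi(f_{\Xi})$ via \eqref{eq:xiformula}.

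Next, I would unfold $f_{\Xi}$ using the explicit parametrization of return times in a cut--and--project space. Writing $x=\Gamma(g_{o},h_{o})$, a direct inspection of the $\Gamma$-quotient shows that $(Y_{W})_{x}$ is parametrized by pairs $(\gamma,\eta)\in\Gamma$ satisfying $\eta h_{o}\in W$, with the group element being (up to an inversion coming from the Campbell convention $g\mapsto g^{-1}$) $g=\gamma g_{o}$; here the injectivity of $\operatorname{proj}_{G}|_{\Gamma}$, built into the cut--and--project scheme, is what makes this parametrization well-defined. A short computation using $(\gamma,\eta)\in\Gamma$ then gives $g.x=\Gamma(e_{G},\eta h_{o})$, so
\[f_{\Xi}(\Gamma(g_{o},h_{o})) = \sum_{(\gamma,\eta)\in\Gamma}\Phi(\gamma g_{o},\eta h_{o}),\qquad \Phi(g,h)\coloneqq \varphi(g^{-1})\,\mathbf{1}_{W}(h)\,\psi(\Gamma(e_{G},h)),\]
which is exactly the form of the inner sum in \eqref{eq:xiformula}.

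Finally, applying \eqref{eq:xiformula} to $\Phi$ and factoring the resulting double integral by Fubini gives
\[\operatorname{covol}_{G\times H}(\Gamma)\cdot \xi_{Y_{W}}(\psi) = \Bigl(\int_{G}\varphi(g^{-1})\,dm_{G}(g)\Bigr)\cdot \int_{W}\psi(\Gamma(e_{G},h))\,dm_{H}(h).\]
Since $G$ is abelian, $m_{G}$ is inverse-invariant, so the first factor is $\int_{G}\varphi\,dm_{G}=1$, and the claim follows. The main bookkeeping task is the unfolding of $(Y_{W})_{x}$ and the careful tracking of the inversion coming from the Campbell convention; there is no conceptual obstacle beyond this, as the two input ingredients (Campbell's theorem and Weil's formula) combine essentially by design.
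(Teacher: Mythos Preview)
Your proof is correct and follows essentially the same route as the paper: compute the periodization $f_\Xi$ by parametrizing the return times via $(\gamma,\eta)\in\Gamma$ (using injectivity of $\operatorname{proj}_G|_\Gamma$), recognize the result as the inner sum in the Weil formula \eqref{eq:xiformula}, and then factor the double integral. The only cosmetic difference is that the paper first derives the identity $\operatorname{covol}_{G\times H}(\Gamma)\cdot m_G\otimes\xi_{Y_W}(f)=\iint_{G\times W}f(g,\Gamma(e_G,h))\,dm_G\otimes m_H$ for a general $f$ and then specializes to a tensor $f=w\otimes\psi$, whereas you take the tensor form from the outset; the underlying computation is identical.
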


\begin{rem}
The formula in Lemma~\ref{lem:capform} holds for every relatively compact measurable set \(W\subset H\).
\end{rem}

\begin{proof}[Proof of Lemma~\ref{lem:capform}]
Let \(f:G\times Y_{W}\to\left[0,+\infty\right]\) be an arbitrary Borel function. Recall for \(x=\Gamma\left(g_{o},h_{o}\right)\in\Xi\) the return times set of \(x\) to \(Y_{W}\) is
\[\left(Y_{W}\right)_{x}=\mathrm{proj}_{G}\left(\Gamma\cap\left(G\times h_{o}W^{-1}\right)\right)^{-1}g_{o},\]
so we can write the periodization of \(f\) in \(x\) as
\begin{align*}
f_{\Xi}\left(x\right)
&=\sum\nolimits_{g\in\left(Y_{W}\right)_{x}}f\left(g^{-1},g.x\right)\\
&=\sum\nolimits_{g\in\mathrm{proj}_{G}\left(\Gamma\cap\left(G\times h_{o}W^{-1}\right)\right)}f\left(g_{o}^{-1}g,\Gamma\left(g,h_{o}\right)\right)\\
&=\sum\nolimits_{\left(\gamma,\eta\right)\in\Gamma}f\left(g_{o}^{-1}\gamma,\Gamma\left(\gamma,h_{o}\right)\right)\mathbf{1}_{W}\left(h_{o}\eta^{-1}\right)\\
&=\sum\nolimits_{\left(\gamma,\eta\right)\in\Gamma}f\left(g_{o}^{-1}\gamma,\Gamma\left(e_{G},h_{o}\eta^{-1}\right)\right)\mathbf{1}_{W}\left(h_{o}\eta^{-1}\right)\\
&=\sum\nolimits_{\left(\gamma,\eta\right)\in\Gamma}f\left(g_{o}^{-1}\gamma^{-1},\Gamma\left(e_{G},h_{o}\eta\right)\right)\mathbf{1}_{W}\left(h_{o}\eta\right)\\
&\coloneqq\sum\nolimits_{\left(\gamma,\eta\right)\in\Gamma}\varphi_{f}\left(\gamma g_{o},\eta h_{o}\right),
\end{align*}
where in the third equality we used that \(\mathrm{proj}_{G}\mid_{\Gamma}\) is injective, and in the last equality we used the function
\[\varphi_{f}:G\times H\to\left[0,+\infty\right],\quad\varphi_{f}\left(g_{o},h_{o}\right)\coloneqq f\left(g_{o}^{-1},\Gamma\left(e_{G},h_{o}\right)\right)\mathbf{1}_{W}\left(h_{o}\right).\]
Using the general definition of transverse measures and the formula \eqref{eq:xiformula}, we obtain the formula
\[\mathrm{covol}_{G\times H}\left(\Gamma\right)\cdot m_{G}\otimes\xi_{Y_{W}}\left(f\right)	=\mathrm{covol}_{G\times H}\left(\Gamma\right)\cdot\xi\left(f_{\Xi}\right)=\iint\nolimits_{G\times W}f\left(g,\Gamma\left(e_{G},h\right)\right)dm_{G}\otimes m_{H}\left(g,h\right).\]
Taking \(f=w\otimes\psi\) for some \(w:G\to\left[0,+\infty\right]\) with \(m_{G}\left(w\right)=1\), the desired formula follows.
\end{proof}

Since \(\Xi=\Gamma\backslash\left(G\times H\right)\) is a compact abelian group, it acts on itself by \(\Xi\)-translations. Recall also that \(G\) acts on \(\Xi\) by \(\Xi\)-translations: each \(g\in G\) acts on \(\Xi\) as the translation by \(\Gamma\left(g,e_{H}\right)\). Therefore, both \(G\) and \(\Xi\) act diagonally on \(\Xi\times\Xi\). In view of this, we have the following identity:

\begin{lem}\label{lem:capident}
For every cut--and--project \(G\)-space \(\left(\Xi,\xi,Y_{W}\right)\) as above with open \hyperlink{window}{window} \(W\),
\[G\diag\left(Y_{W}\times Y_{W}\right)=\Xi\diag\left(Y_{W}\times Y_{W}\right).\]
\end{lem}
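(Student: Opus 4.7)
The plan is to reduce the set-equality to a condition on the lattice $\Gamma$ and then exploit the density of $\operatorname{proj}_{H}(\Gamma)$ in $H$. I will work with $G$ and $H$ additively. The inclusion $G\diag(Y_{W}\times Y_{W}) \subseteq \Xi\diag(Y_{W}\times Y_{W})$ is automatic: the $G$-action on $\Xi$ factors as translation by the image of the continuous homomorphism $\iota\colon G\to\Xi$, $g\mapsto \Gamma(-g,e_{H})$, so every $G$-diagonal translate of a pair is in particular a $\Xi$-diagonal translate of the same pair. The content is therefore the reverse inclusion.

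For the reverse inclusion I would fix $\zeta = \Gamma(g_{0},h_{0}) \in \Xi$ together with $(y_{1},y_{2}) \in Y_{W}\times Y_{W}$, where $y_{i} = \Gamma(e_{G},w_{i})$ with $w_{i}\in W$, and look for $g\in G$ and $w_{1}',w_{2}' \in W$ such that, setting $y_{i}' := \Gamma(e_{G},w_{i}')$,
\[
\zeta\diag(y_{1},y_{2}) = g\diag(y_{1}',y_{2}').
\]
Unwinding both sides using the coset criterion $\Gamma(a,b)=\Gamma(c,d) \Leftrightarrow (a-c,b-d)\in\Gamma$, this equality collapses to the single requirement that there exist $(\gamma,\eta)\in\Gamma$ with $w_{i}+h_{0}-\eta \in W$ for both $i=1,2$. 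The crucial point is that one and the same $(\gamma,\eta)$ serves both coordinates: the common first component $\gamma$ produces, via injectivity of $\operatorname{proj}_{G}\mid_{\Gamma}$, a well-defined element $g$ depending only on $\gamma$ and $g_{0}$, and the corresponding unique $\eta$ then yields the admissible $w_{i}' = w_{i}+h_{0}-\eta$ for both $i$ simultaneously.

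The condition on $\eta$ says precisely that $\eta \in (w_{1}+h_{0}-W)\cap(w_{2}+h_{0}-W)$. This is where the hypothesis that $W$ is open is used decisively: each factor is open in $H$, and each contains $h_{0}$ because $w_{i}\in W$, so the intersection is an open neighborhood of $h_{0}$. The defining density property $\overline{\operatorname{proj}_{H}(\Gamma)} = H$ of the cut--and--project scheme then furnishes $(\gamma,\eta)\in\Gamma$ with $\eta$ in this neighborhood, which completes the proof. I expect the only obstacle to be the straightforward but slightly delicate coset-arithmetic bookkeeping in the reduction step; conceptually the argument is just the interplay between openness of $W$ and density of $\operatorname{proj}_{H}(\Gamma)$.
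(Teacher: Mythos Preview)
Your proposal is correct and follows essentially the same approach as the paper's proof: both reduce the reverse inclusion to finding a lattice element whose $H$-coordinate lies in the open set $(w_{1}+h_{0}-W)\cap(w_{2}+h_{0}-W)$ (the paper writes this multiplicatively as $h_{1}W^{-1}\cap h_{2}W^{-1}$), observe that $h_{0}$ itself lies in this intersection so that it is a nonempty open set, and then invoke the density of $\operatorname{proj}_{H}(\Gamma)$. Your use of injectivity of $\operatorname{proj}_{G}\mid_{\Gamma}$ to force a common $\eta$ for both coordinates is exactly the mechanism the paper relies on implicitly when it chooses a single $\gamma=(\gamma_{G},\gamma_{H})$.
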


\begin{proof}[Proof of Lemma~\ref{lem:capident}]
The inclusion \(G\diag\left(Y_{W}\times Y_{W}\right)\subseteq\Xi\diag\left(Y_{W}\times Y_{W}\right)\) is clear, and we prove the opposite inclusion. Consider arbitrary elements \(x=\Gamma\left(g_{o},h_{o}\right)\) and \(\left(y_{1},y_{2}\right)=\left(\Gamma\left(e_{G},w_{1}\right),\Gamma\left(e_{G},w_{2}\right)\right)\in Y_{W}\times Y_{W}\). Denoting \(h_{1}\coloneqq w_{1}h_{o}\) and \(h_{2}\coloneqq w_{2}h_{o}\), we have
\[x\diag\left(y_{1},y_{2}\right)=\left(\Gamma\left(g_{o},h_{1}\right),\Gamma\left(g_{o},h_{2}\right)\right)\in\Xi\diag\left(Y_{W}\times Y_{W}\right).\]
Define the open set
\[U\coloneqq h_{1}W^{-1}\cap h_{2}W^{-1}\subset H,\]
and note that \(U\neq\emptyset\) since \( h_{1}w_{1}^{-1}=h_{2}w_{2}^{-1}\in h_{1}W^{-1}\cap h_{2}W^{-1}\). Using that \(\mathrm{proj}_{H}\left(\Gamma\right)\) is dense in \(H\), there is \(\gamma=\left(\gamma_{G},\gamma_{H}\right)\in\Gamma\) with \(\gamma_{H}\in U\), hence
\[h_{1}\gamma_{H}^{-1}\in W\text{ and }h_{2}\gamma_{H}^{-1}\in W.\]
We finally obtain that
\begin{align*}
x\diag\left(y_{1},y_{2}\right)
&=\left(\Gamma\left(g_{o}\gamma_{G}^{-1},h_{1}\gamma_{H}^{-1}\right),\Gamma\left(g_{o}\gamma_{G}^{-1},h_{2}\gamma_{H}^{-1}\right)\right)\\
&=\gamma_{G}g_{o}^{-1}\diag\left(\Gamma\left(e_{G},h_{1}\gamma_{H}^{-1}\right),\Gamma\left(e_{G},h_{2}\gamma_{H}^{-1}\right)\right)\in G\diag\left(Y_{W}\times Y_{W}\right).\qedhere
\end{align*}
\end{proof}

\begin{lem}\label{lem:capapp}
Let \(W\subset H\) be a \hyperlink{window}{window} such that \(W-W\) is Jordan measurable. Then for every \(\epsilon>0\), there is an open set \(W^{\epsilon}\) such that \(W\subseteq W^{\epsilon}\) and \(m_{H}\left(\left(W^{\epsilon}-W^{\epsilon}\right)\backslash\left(W-W\right)\right)<\epsilon\).
\end{lem}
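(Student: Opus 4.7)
The plan is to obtain $W^\epsilon$ as a small open thickening of $W$, and control the excess of $W^\epsilon - W^\epsilon$ over $W - W$ via continuity of Haar measure. Concretely, let $(V_n)_{n \in \mathbb{N}}$ be a decreasing neighborhood base at $0 \in H$ consisting of relatively compact open symmetric sets, and for each $n$ pick a symmetric open neighborhood $U_n$ of $0$ with $U_n - U_n \subseteq V_n$. I define
\[
W^\epsilon := W + U_n
\]
for $n$ sufficiently large, to be chosen momentarily. Then $W^\epsilon$ is open and contains $W$, and using that $H$ is abelian,
\[
W^\epsilon - W^\epsilon \;=\; (W - W) + (U_n - U_n) \;\subseteq\; \overline{W - W} + V_n.
\]

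The next step is to show that $m_H(\overline{W - W} + V_n) \to m_H(W - W)$ as $n \to \infty$. Since $W$ is relatively compact, $\overline{W-W}$ is compact, and since $W - W$ is Jordan measurable, $m_H(\partial(W-W)) = 0$, so $m_H(\overline{W-W}) = m_H(W-W)$. The key observation is
\[
\bigcap_{n \in \mathbb{N}} \bigl(\overline{W-W} + V_n\bigr) \;=\; \overline{W-W},
\]
which follows from $\overline{W-W}$ being closed, $(V_n)$ being a neighborhood base at $0$, and $H$ being Hausdorff: any point $x$ in the intersection can be written as $x = k_n + v_n$ with $k_n \in \overline{W-W}$ and $v_n \in V_n \to 0$, forcing $k_n \to x$ and hence $x \in \overline{W - W}$. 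As $\overline{W-W} + V_1$ is relatively compact, it has finite Haar measure, and continuity of $m_H$ from above gives
\[
m_H\bigl(\overline{W-W} + V_n\bigr) \;\longrightarrow\; m_H\bigl(\overline{W-W}\bigr) \;=\; m_H(W-W).
\]

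Finally, choosing $n$ large enough that $m_H(\overline{W-W} + V_n) - m_H(W-W) < \epsilon$, and using that $W - W \subseteq W^\epsilon - W^\epsilon \subseteq \overline{W-W} + V_n$, I conclude
\[
m_H\bigl((W^\epsilon - W^\epsilon) \setminus (W - W)\bigr) \;\leq\; m_H\bigl(\overline{W-W} + V_n\bigr) - m_H(W - W) \;<\; \epsilon,
\]
as desired. No serious obstacle is anticipated; the whole argument is driven by continuity of Haar measure combined with Jordan measurability of $W - W$, and the only point requiring a moment's care is the identity $\bigcap_n(\overline{W-W} + V_n) = \overline{W-W}$, which rests on the Hausdorff property and the closedness of $\overline{W-W}$.
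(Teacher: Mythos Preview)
Your proof is correct and follows essentially the same approach as the paper's: both set \(W^{\epsilon}=W+U\) for a small symmetric identity neighborhood \(U\) and then show that \((W-W)+(U-U)\) exceeds \(W-W\) by less than \(\epsilon\) in measure, using Jordan measurability of \(W-W\) to pass from \(\overline{W-W}\) to \(W-W\). The only tactical difference is that the paper invokes outer regularity of \(m_{H}\) together with a compactness covering to trap \((W-W)+(U-U)\) inside a prescribed open set, whereas you use continuity of \(m_{H}\) from above along the decreasing sequence \(\overline{W-W}+V_{n}\); both are standard and equivalent here.
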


\begin{proof}[Proof of Lemma~\ref{lem:capapp}]
Put \(D=W-W\) and note that \(D\) is relatively compact since \(W\) is. Let \(\epsilon>0\) be arbitrary. By the outer regularity of \(m_{H}\), there is an open set \(V^{\epsilon}\) such that \(\overline{D}\subseteq V^{\epsilon}\) and \(m_{H}\left(V^{\epsilon}\backslash\overline{D}\right)<\epsilon\). For every \(d\in\overline{D}\), pick a symmetric identity neighborhood \(U_{d}\) such that \(d+U_{d}+U_{d}+U_{d}\subseteq V^{\epsilon}\). Since \(\overline{D}\) is compact, there are finitely many \(d_{1},\dotsc,d_{n}\in\overline{D}\) such that \(d_{i}+U_{d_{i}}+U_{d_{i}}+U_{d_{i}}\subseteq V^{\epsilon}\) and \(\overline{D}\subseteq\bigcup\nolimits_{i=1}^{n}\left(d_{i}+U_{d_{i}}\right)\). Let then \(U^{\epsilon}\coloneqq\bigcap\nolimits_{i=1}^{n}U_{d_{i}}\), which is a symmetric identity neighborhood, and define \(W^{\epsilon}\coloneqq W+U^{\epsilon}\). Then \(W^{\epsilon}\) is an open set containing \(W\), and it satisfies
\[W^{\epsilon}-W^{\epsilon}=D+U^{\epsilon}+U^{\epsilon}\subseteq\bigcup\nolimits_{i=1}^{n}\left(d_{i}+U_{d_{i}}+U_{d_{i}}+U_{d_{i}}\right)\subseteq V^{\epsilon},\]
concluding that
\[m_{H}\left(\left(W^{\epsilon}-W^{\epsilon}\right)\backslash\left(W-W\right)\right)\leq m_{H}\left(V^{\epsilon}\backslash D\right)=m_{H}\big(V^{\epsilon}\backslash\overline{D}\big)<\epsilon,\]
where we used Jordan measurability of \(D\), by which \(m_{H}\left(\left(V^{\epsilon}\backslash D\right)\backslash\left(V^{\epsilon}\backslash\overline{D}\right)\right)\leq m_{H}\left(\overline{D}\backslash D\right)=0\).
\end{proof}

\begin{proof}[Proof of Theorem~\ref{mthm:cap}]
For part (1), we apply Lemma~\ref{lem:capform} with \(\psi\equiv 1\), so that
\[\iota_{\xi}\left(Y_{W}\right)=\xi_{Y_{W}}\left(Y_{W}\right)=\mathrm{covol}_{G\times H}\left(\Gamma\right)^{-1}\cdot m_{H}\left(W\right).\]
We then prove part (2). Let us assume first that \(W\) is open. Consider the transverse \(G\)-space
\[\big(\Xi\times\Xi,\xi\otimes\xi,Y_{W}^{\left[2\right]}\big),\]
where \(G\) acts on the first coordinate, and define the map
\[\delta:\big(\Xi\times\Xi,\xi\otimes\xi,Y_{W}^{\left[2\right]}\big)\longrightarrow\left(\Xi,\xi,Y_{W-W}\right),\quad \delta\left(x,x^{\prime}\right)=x-x^{\prime}.\]
Note that \(\delta\) surjective and \(G\)-equivariant, and therefore, by the uniqueness of the Haar measure, it is measure preserving. We then have
\begin{equation}\label{eq:delta}
\delta^{-1}\left(Y_{W-W}\right)=\delta^{-1}\left(Y_{W}-Y_{W}\right)=\Xi\diag\left(Y_{W}\times Y_{W}\right)=G\diag\left(Y_{W}\times Y_{W}\right)=Y_{W}^{\left[2\right]},
\end{equation}
where the first two equalities are by commutativity, and the third equality is by Lemma~\ref{lem:capident} and since \(W\) is open. Therefore, \(\delta\) forms a transverse \(G\)-factor, and we conclude that
\[I_{\xi}\left(Y_{W}\right)=\iota_{\xi\otimes\xi}\big(Y_{W}^{\left[2\right]}\big)=\iota_{\xi}\left(Y_{W-W}\right)=\mathrm{covol}_{G\times H}\left(\Gamma\right)^{-1}\cdot m_{H}\left(W-W\right),\]
where the first equality is the definition of intersection covolume; the second equality is by~\cite[Proposition 7.5]{AvBjCuI} in light of \eqref{eq:delta}; and, the last equality is by part (1) for the window \(W-W\) (recall \(W\) is open). Of course, \(W^{o}-W^{o}=W-W\) and this completes the proof of part (2) for the case that \(W\) is open.

Assume now that \(W\) is a window, not necessarily open, but \(W-W\) is Jordan measurable. For every \(\epsilon>0\) pick \(W^{\epsilon}\) as in Lemma~\ref{lem:capapp}, and then \(Y_{W^{o}}\subseteq Y_{W}\subseteq Y_{W^{\epsilon}}\). By the monotonicity of intersection covolume~\cite[Proposition 7.1]{AvBjCuI} we obtain
\[\mathrm{covol}_{G\times H}\left(\Gamma\right)^{-1}\cdot m_{H}\left(W^{o}-W^{o}\right)=I_{\xi}\left(Y_{W^{o}}\right)\leq I_{\xi}\left(Y_{W}\right)\leq I_{\mu}\left(Y_{W^{\epsilon}}\right)=\mathrm{covol}_{G\times H}\left(\Gamma\right)^{-1}\cdot m_{H}\left(W^{\epsilon}-W^{\epsilon}\right),\]
where the two equalities are by the first part of the proof, as \(W^{o}\) are \(W^{\epsilon}\) are open. Finally, since \(m_{H}\left(W^{\epsilon}-W^{\epsilon}\right)<m_{H}\left(W-W\right)+\epsilon\) and \(\epsilon\) is arbitrary, the inequalities of part (2) readily follow.
\end{proof}

\begin{proof}[Proof of Corollary~\ref{cor:cap}]
Using the Brunn--Minkowski inequality we obtain
\begin{align*}
I_{\xi}\left(Y_{W}\right)\geq I_{\xi}\left(Y_{W^{o}}\right)
&=\mathrm{covol}_{G\times \mathbb{R}^{d}}\left(\Gamma\right)^{-1}\cdot m_{\mathbb{R}^{d}}\left(W^{o}-W^{o}\right)\\
&\geq 2^{d}\cdot\mathrm{covol}_{G\times \mathbb{R}^{d}}\left(\Gamma\right)^{-1}\cdot m_{\mathbb{R}^{d}}\left(W^{o}\right)=2^{d}\cdot\iota_{\xi}\left(Y_{W^{o}}\right)=2^{d}\cdot\iota_{\xi}\left(Y_{W}\right), 
\end{align*}
where the first inequality is by \(Y_{W}\supseteq Y_{W^{o}}\) and the monotonicity of intersection covolume~\cite[Proposition 7.1]{AvBjCuI}; the next equality is by part (1) of Theorem~\ref{mthm:cap}; the inequality in the second line is the Brunn--Minkowski inequality~\cite[\S4]{gardner2002brunn}; and, the last equality is by part (2) of Theorem~\ref{mthm:cap} and since \(W^{o}\) is open.

Now for (1), if \(W=\mathrm{conv}\left(W\right)\) modulo \(m_{\mathbb{R}^{d}}\) then \(m_{\mathbb{R}^{d}}\left(W-W\right)=2^{d}\cdot m_{\mathbb{R}^{d}}\left(W\right)\) by the extremes of the Brunn--Minkowski inequality.\footnote{See~\cite[\S4]{gardner2002brunn}, and the discussion of the equality cases following the proof of~\cite[Theorem 4.1]{gardner2002brunn}.} Therefore,
\begin{align*}
2^{d}\cdot\iota_{\xi}\left(Y_{W}\right)\leq I_{\xi}\left(Y_{W}\right)
&\leq\mathrm{covol}_{G\times\mathbb{R}^{d}}\left(\Gamma\right)^{-1}\cdot m_{\mathbb{R}^{d}}\left(W-W\right)\\
&=\mathrm{covol}_{G\times\mathbb{R}^{d}}\left(\Gamma\right)^{-1}\cdot2^{d}\cdot m_{\mathbb{R}^{d}}\left(W\right)=2^{d}\cdot\iota_{\xi}\left(Y_{W}\right),
\end{align*}
implying that \(I_{\xi}\left(Y_{W}\right)=2^{d}\cdot\iota_{\xi}\left(Y_{W}\right)\).

As for (2), note that from the computation at the beginning of this proof, if \(I_{\xi}\left(Y_{W}\right)=2^{d}\cdot\iota_{\xi}\left(Y_{W}\right)\) then \(m_{\mathbb{R}^{d}}\left(W^{o}-W^{o}\right)=2^{d}\cdot m_{\mathbb{R}^{d}}\left(W^{o}\right)\). Then by the extremes of the Brunn--Minkowski inequality we obtain \(W^{o}=\mathrm{conv}\left(W^{o}\right)\) modulo \(m_{\mathbb{R}^{d}}\), and since \(W\) is Jordan measurable, \(W=\mathrm{conv}\left(W^{o}\right)\) modulo \(m_{\mathbb{R}^{d}}\).
\end{proof}

\begin{rem}\label{rem:sumset}
In~\cite[\S7]{AvBjCuI} we defined higher order intersection covolume \(I_{\mu}^{r}\left(Y\right)\) for \(r\in\mathbb{N}\), and in the setting of Theorem~\ref{mthm:cap} one may ask for a formula for \(I_{\xi}^{r}\left(Y_{W}\right)\) with \(r\in\mathbb{N}\). In~\cite{shkredov2015}, Shkredov introduced \emph{higher order sumsets} of order \(r\in\mathbb{N}\) of a window \(W\subset H\), which are defined by
\[W^{\left[r\right]}\coloneqq\big\{ \left(h_{1},\dotsc,h_{r-1}\right)\in H^{\otimes \left(r-1\right)}:W\cap\left(W-h_{1}\right)\cap\dotsm\cap\left(W-h_{r-1}\right)\neq\emptyset\big\}\subset H^{\otimes\left(r-1\right)}.\]
Note that \(W^{\left[2\right]}=W-W\subset H\). It seems reasonable to ask whether the following bounds hold for \(r\in\mathbb{N}\):
\[\mathrm{covol}_{G\times H}\left(\Gamma\right)^{-1}\cdot m_{H}^{\otimes\left(r-1\right)}\big(\left(W^{o}\right)^{\left[r\right]}\big)\leq I_{\xi}^{r}\left(Y_{W}\right)\leq\mathrm{covol}_{G\times H}\left(\Gamma\right)^{-1}\cdot m_{H}^{\otimes\left(r-1\right)}\big(W^{\left[r\right]}\big).\]
\end{rem}

\section{The main inequality}

Here we aim to prove Theorem~\ref{mthm:mineq}. We carry out the analysis using the Kronecker factor of the system and the corresponding Kronecker compactification. While this is a classical construction (see~\cite[\S6.4]{EinsiedlerWard}), we include a fairly detailed treatment of the theory in the generality of lcsc abelian groups.

\subsection{The Kronecker factor}

Throughout this section, \(G\) stands for an lcsc abelian group and \(m_{G}\) stands for a Haar measure of \(G\), and \(\left(X,\mu\right)\) an ergodic probability preserving \(G\)-space. Recall that a Borel function \(f:X\to\mathbb{C}\) is a {\bf \(G\)-eigenfunction}, if there is a character \(\chi\in\widehat{G}\) such that
\[f\left(g.x\right)=\chi\left(g\right)f\left(x\right)\text{ for every }g\in G\text{ and }\mu\text{-a.e. }x\in X.\]
We refer to \(\chi\) as the {\bf eigencharacter} of \(f\). For \(\chi\in\widehat{G}\), denote by \(E_{\chi}\) the closed linear space of of \(G\)-eigenfunctions whose eigencharacter is \(\chi\). By ergodicity, the modulus of every \(G\)-eigenfunction is constant \(\mu\)-a.e. Also, every \(E_{\chi}\) is at most \(1\)-dimensional for all \(\chi\in\widehat{G}\), and \(E_{\chi},E_{\chi^{\prime}}\) are orthogonal subspaces of \(L^{2}\left(\mu\right)\) for all two different  \(\chi,\chi^{\prime}\in\widehat{G}\). In particular, since \(L^{2}\left(\mu\right)\) is separable, there are at most countably many \(\chi\in\widehat{G}\) with \(E_{\chi}\neq\{0\}\). We denote this countable set by
\[\mathcal{E}\coloneqq \{\chi\in\widehat{G}:E_{\chi}\neq\{0\}\}.\]
By their definition, \(G\)-eigenfunctions satisfy a property that holds for every \(g\in G\) on a \(\mu\)-conull set. It is useful to have a pointwise-defined version of this property for all \(G\)-eigenfunctions at once:

\begin{prop}\label{prop:pointeig}
There exist a \(\mu\)-conull \(G\)-invariant Borel set \(X_{o}\subseteq X\), and a collection \(\mathbf{E}\) of pointwise-defined square-integrable functions on \(\left(X,\mu\right)\), such that:
\begin{enumerate}
    \item \(\mathbf{E}\) contains a constant-multiple of each \(G\)-eigenfunction.
    \item For every \(\chi\in\widehat{G}\) and \(f\in\mathbf{E}\cap E_{\chi}\),
    \[f\left(g.x\right)=\chi\left(g\right)f\left(x\right)\text{ for every }\left(g,x\right)\in G\times X_{o}.\]
\end{enumerate}
\end{prop}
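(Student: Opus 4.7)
The plan is to use Fubini's theorem to upgrade the almost-everywhere eigenfunction relation to a strictly pointwise one on a conull $G$-invariant Borel set, then intersect over the countable family of eigencharacters.

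Since $L^{2}(\mu)$ is separable and the eigenspaces $E_{\chi}$ are pairwise orthogonal, $\mathcal{E} = \{\chi \in \widehat{G} : E_{\chi} \neq \{0\}\}$ is at most countable. For each $\chi \in \mathcal{E}$, fix a Borel representative $f_{\chi}$ of a nonzero element of $E_{\chi}$, normalized via ergodicity so that $|f_{\chi}| = 1$ $\mu$-almost everywhere. The defining relation, valid for each fixed $g \in G$ on a $\mu$-conull set, combined with Fubini, shows that the Borel set
\[
B_{\chi} := \{(g, x) \in G \times X : f_{\chi}(g.x) \neq \chi(g) f_{\chi}(x)\}
\]
has $(m_{G} \otimes \mu)$-measure zero. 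Applying Fubini in the opposite direction, the Borel set $X_{o}^{\chi} \subseteq X$ consisting of all $x$ whose vertical slice $\{g : (g, x) \in B_{\chi}\}$ is $m_{G}$-null is itself $\mu$-conull.

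For $x \in X_{o}^{\chi}$, the quantity $\chi(g)^{-1} f_{\chi}(g.x)$ is $m_{G}$-a.e.\ constant in $g$; call this essentially unique value $\tilde{f}_{\chi}(x)$, and set $\tilde{f}_{\chi} := 0$ off $X_{o}^{\chi}$. Then $\tilde{f}_{\chi}$ is a Borel, square-integrable function agreeing with $f_{\chi}$ $\mu$-almost everywhere. To verify strict pointwise equivariance, fix $x \in X_{o}^{\chi}$ and $h \in G$. By translation invariance of $m_{G}$, for $m_{G}$-a.e.\ $g$ one has
\[
f_{\chi}(g.(h.x)) = f_{\chi}((gh).x) = \chi(gh)\, \tilde{f}_{\chi}(x) = \chi(g) \cdot \bigl(\chi(h)\, \tilde{f}_{\chi}(x)\bigr),
\]
which shows that $h.x \in X_{o}^{\chi}$ with $\tilde{f}_{\chi}(h.x) = \chi(h)\, \tilde{f}_{\chi}(x)$. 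Hence $X_{o}^{\chi}$ is $G$-invariant and the identity $\tilde{f}_{\chi}(g.x) = \chi(g)\, \tilde{f}_{\chi}(x)$ holds on the entirety of $G \times X_{o}^{\chi}$.

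Finally, set $X_{o} := \bigcap_{\chi \in \mathcal{E}} X_{o}^{\chi}$, which is Borel, $G$-invariant, and $\mu$-conull as a countable intersection of such sets, and let $\mathbf{E} := \{\tilde{f}_{\chi} : \chi \in \mathcal{E}\}$. Since each $E_{\chi}$ is at most one-dimensional, every $G$-eigenfunction with eigencharacter $\chi$ equals a scalar multiple of $f_{\chi} = \tilde{f}_{\chi}$ $\mu$-a.e., so $\mathbf{E}$ contains a constant multiple of each $G$-eigenfunction, as required. The main technical point is the Fubini swap, which converts the ``a.e.-in-$x$ for fixed $g$'' relation into an ``a.e.-in-$g$ for fixed $x$'' one; the latter is exactly what allows one to define the canonical value $\tilde{f}_{\chi}(x)$ and propagate strict pointwise equivariance on the whole orbit via translation invariance of $m_{G}$.
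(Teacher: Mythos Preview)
Your Fubini strategy is sound, but there is a gap in the $G$-invariance step. By your definition via $B_\chi$, membership $x\in X_o^\chi$ means $\chi(g)^{-1}f_\chi(g.x)=f_\chi(x)$ for $m_G$-a.e.\ $g$; the a.e.\ constant is pinned to the specific value $f_\chi(x)$, so in fact $\tilde f_\chi=f_\chi$ on $X_o^\chi$. Your displayed computation only shows that $g\mapsto\chi(g)^{-1}f_\chi(g.(h.x))$ is $m_G$-a.e.\ equal to $\chi(h)\tilde f_\chi(x)$. For $h.x\in X_o^\chi$ you would additionally need this constant to equal $f_\chi(h.x)$, i.e.\ $f_\chi(h.x)=\chi(h)f_\chi(x)$, which is precisely the pointwise identity you are trying to establish. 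Concretely, if $f_\chi$ is obtained from a strictly equivariant $\mathbb{T}$-valued function by altering its value at a single point $x_0$, then $X_o^\chi=X\setminus\{x_0\}$, which is not $G$-invariant; so the claim ``$h.x\in X_o^\chi$'' fails as stated.

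The fix is to decouple the good set from the value $f_\chi(x)$. Put $\tilde f_\chi(x)\coloneqq m_G(K)^{-1}\int_K \chi(g)^{-1}f_\chi(g.x)\,dm_G(g)$ for a fixed compact identity neighborhood $K$ (Borel, and equal to $f_\chi$ on your $X_o^\chi$), and replace $X_o^\chi$ by the Borel set of $x$ for which $\chi(g)^{-1}f_\chi(g.x)=\tilde f_\chi(x)$ for $m_G$-a.e.\ $g$. Your translation argument then goes through verbatim, since now the same a.e.\ computation simultaneously yields membership of $h.x$ and the value $\tilde f_\chi(h.x)=\chi(h)\tilde f_\chi(x)$. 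The paper bypasses this hands-on work by viewing each normalized $f_\chi$ as a $G$-factor map to $(\mathbb{T},m_{\mathbb{T}})$ with the action $g.t=\chi(g)t$ and invoking a general pointwise-realization lemma for factor maps from~\cite{AvBjCuI}; your patched argument is essentially what underlies that lemma in this special case.
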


\begin{proof}
For each \(\chi\in\mathcal{E}\) choose some \(f_{\chi}\in E_{\chi}\) normalized such that \(f_{\chi}:X\to\mathbb{T}\). Since \(E_{\chi}\) is \(1\)-dimensional, this determines \(f_{\chi}\) as an element of \(E_{\chi}\) up to a \(\mu\)-null set. Note that \(f_{\chi}\) is a \(G\)-factor from \(\left(X,\mu\right)\) to \(\left(\mathbb{T},m_{\mathbb{T}}\right)\), where the latter becomes a probability preserving \(G\)-space with the action \(g.t=\chi\left(g\right)t\). Therefore, it is well-known (see~\cite[Proposition 5.7]{AvBjCuI}) that there are a \(\mu\)-conull \(G\)-invariant Borel set \(X_{\chi}\subseteq X\) as well as a Borel function \(f_{\chi}^{\prime}:X\to\mathbb{T}\) such that
\[f_{\chi}^{\prime}=f_{\chi}\mid_{X_{\chi}}\,\operatorname{mod}\,\mu\,\text{ and }\,f_{\chi}^{\prime}\left(g.x\right)=\chi\left(g\right)f_{\chi}^{\prime}\left(x\right)\text{ for all }\left(g,x\right)\in G\times X_{\chi}.\]
Then the assertion of the proposition holds with
\[X_{o}\coloneqq\bigcap\nolimits_{\chi\in\mathcal{E}}X_{\chi}\quad\text{and}\quad\mathbf{E}\coloneqq \bigcup\nolimits_{\chi\in\mathcal{E}}\mathbb{C}\cdot f_{\chi}^{\prime}.\qedhere\]
\end{proof}

The {\bf Kronecker factor} of \(\left(X,\mu\right)\), denoted \(\mathcal{K}_{G}\), is defined as the smallest Borel sub-\(\sigma\)-algebra with respect to which all \(G\)-eigenfunctions of \(\left(X,\mu\right)\) are measurable. In light of Proposition~\ref{prop:pointeig}, the Kronecker factor \(\mathcal{K}_{G}\) can be concretely realized as the smallest \(\sigma\)-algebra with respect to which all the pointwise-defined functions in \(\mathbf{E}\) are measurable. Recall that a {\bf compactification} of an lcsc group \(G\) is a pair \(\left(K,\tau\right)\), consisting of a compact abelian group \(K\) and a continuous homomorphism \(\tau:G\to K\), such that \(\tau\left(G\right)\) is dense in \(K\).

\begin{thm}\label{thm:krongrp}
There exists a compactification \(\left(K,\tau\right)\) of \(G\), called the {\bf Kronecker compactification}, a \(\mu\)-conull \(G\)-invariant set \(X_{o}\subseteq X\), and a Borel map \(\kappa:X_{o}\to K\), such that:
\begin{enumerate}
    \item \(\kappa_{\ast}\mu=m_{K}\) (the Haar measure of \(K\)).
    \item \(\kappa\left(g.x\right)=\tau\left(g\right)+\kappa\left(x\right)\) for all \(\left(g,x\right)\in G \times X_{o}\). 
    \item \(\kappa^{-1}\left(\mathcal{B}_{K}\right)=\mathcal{K}_{G}\mid_{X_{o}}\) modulo \(\mu\), where \(\mathcal{B}_{K}\) is the Borel \(\sigma\)-algebra of \(K\).
\end{enumerate}
Thus, \(\kappa:\left(X,\mu\right)\to\left(K,m_{K}\right)\) is a \(G\)-factor map, where the latter is the the probability preserving \(G\)-space with the action \(g.k=\tau\left(g\right)+k\).
\end{thm}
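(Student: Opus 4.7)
The plan is to realize the Kronecker compactification concretely by embedding $X$ coordinatewise into the product torus $\mathbb{T}^{\mathcal{E}}$, using the pointwise-defined eigenfunctions of Proposition~\ref{prop:pointeig}, and then taking $K$ to be the closure of the image of $G$ therein. First I would verify that $\mathcal{E}\subseteq\widehat{G}$ is a countable subgroup: products and conjugates of nonzero eigenfunctions (which are of constant positive modulus by ergodicity) are again nonzero eigenfunctions whose eigencharacters multiply and invert accordingly. Applying Proposition~\ref{prop:pointeig}, I then fix for each $\chi\in\mathcal{E}$ a pointwise-defined unit-modulus eigenfunction $f_\chi^\prime:X\to\mathbb{T}$, together with a single $\mu$-conull $G$-invariant Borel set $X_o^\prime$ on which $f_\chi^\prime(g.x)=\chi(g)f_\chi^\prime(x)$ holds simultaneously for all $\chi\in\mathcal{E}$ and all $(g,x)\in G\times X_o^\prime$.

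Next, I would form the Borel map $\Psi:X_o^\prime\to\mathbb{T}^{\mathcal{E}}$, $\Psi(x)(\chi):=f_\chi^\prime(x)$, and the continuous homomorphism $\tau_0:G\to\mathbb{T}^{\mathcal{E}}$, $\tau_0(g)(\chi):=\chi(g)$. Writing $\mathbb{T}^{\mathcal{E}}$ additively, the pointwise equivariance becomes
\[
\Psi(g.x)=\tau_0(g)+\Psi(x)\quad\text{for all }(g,x)\in G\times X_o^\prime.
\]
Set $K:=\overline{\tau_0(G)}\subseteq\mathbb{T}^{\mathcal{E}}$, a compact abelian group, and $\tau:=\tau_0:G\to K$, so that $(K,\tau)$ is a compactification of $G$ by construction. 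The pushforward $\nu:=\Psi_*\mu$ is a Borel probability measure on $\mathbb{T}^{\mathcal{E}}$, $\tau_0(G)$-invariant by the displayed equivariance and $\tau_0(G)$-ergodic by the ergodicity of $(X,\mu)$ (any $\tau_0(G)$-invariant Borel set in $\mathbb{T}^{\mathcal{E}}$ pulls back via $\Psi$ to a $G$-invariant set in $X$). By continuity of translation and density of $\tau_0(G)$ in $K$, $\nu$ is in fact $K$-invariant and $K$-ergodic.

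The main obstacle is the structural claim I then invoke: a $K$-invariant, $K$-ergodic Borel probability measure $\nu$ on $\mathbb{T}^{\mathcal{E}}$ must be concentrated on a single $K$-coset $a+K$ and coincide there with a translate of the Haar measure $m_K$. The proof is standard: pushing $\nu$ down to the quotient group $\mathbb{T}^{\mathcal{E}}/K$, on which $K$ acts trivially, $K$-ergodicity forces the pushforward to be a Dirac mass, singling out the coset $a+K$, and uniqueness of Haar then identifies $\nu\mid_{a+K}$ with $(\,\cdot\,+a)_*m_K$. With this in hand I define $X_o:=\{x\in X_o^\prime:\Psi(x)\in a+K\}$, which is $\mu$-conull and $G$-invariant (if $\Psi(x)\in a+K$ then $\Psi(g.x)=\tau_0(g)+\Psi(x)\in K+a+K=a+K$), and set $\kappa:=\Psi-a:X_o\to K$. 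Property (2) is the equivariance of $\Psi$ rewritten, since $a$ is constant; property (1) follows because $\kappa_*\mu$ is the $(-a)$-translate of $\nu\mid_{a+K}$, which is $m_K$; and property (3) follows because the coordinate characters $\pi_\chi$ of $\mathbb{T}^{\mathcal{E}}$, restricted to $K$, generate $\mathcal{B}_K$ (by Pontryagin duality/Stone--Weierstrass), and each $\pi_\chi\circ\kappa$ differs from $f_\chi^\prime$ by a constant of modulus one, so $\kappa^{-1}(\mathcal{B}_K)$ coincides with the $\sigma$-algebra generated by $\{f_\chi^\prime\}_{\chi\in\mathcal{E}}$, which is $\mathcal{K}_G\mid_{X_o}$ modulo $\mu$ by the concrete realization of $\mathcal{K}_G$ described after Proposition~\ref{prop:pointeig}.
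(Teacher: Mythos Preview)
Your proof is correct and follows essentially the same strategy as the paper: embed $X$ into $\mathbb{T}^{\mathcal{E}}$ via the pointwise eigenfunctions of Proposition~\ref{prop:pointeig}, define $K$ as the closure of the image of $G$, show the pushforward of $\mu$ lives on a single $K$-coset and equals a translate of Haar, and then shift by the basepoint to obtain $\kappa$. The only substantive difference is in how you identify that coset: you pass to the quotient $\mathbb{T}^{\mathcal{E}}/K$ and use $K$-ergodicity to force the pushforward to be a Dirac mass, whereas the paper works with the support $S$ of $\nu$ and uses an invariant-metric argument to show that $S$ is a single $K$-orbit before translating; both arguments are standard and yield the same conclusion.
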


\begin{proof}
Pick \(\mathbf{E}\) and \(X_{o}\) as in Proposition~\ref{prop:pointeig}, so that for each \(\chi\in\mathcal{E}\) we may fix \(f_{\chi}\in\mathbf{E}\cap E_{\chi}\) normalized such that \(f_{\chi}:X_{o}\to\mathbb{T}\). Consider the compact abelian group \(A\coloneqq \mathbb{T}^{\mathcal{E}}\), and for every \(a\in A\), let \(R_{a}:A\to A\) be the \(a\)-rotation on \(A\). Look at the continuous homomorphism
\[\tau:G\longrightarrow A,\quad\tau\left(g\right)\coloneqq \left(\chi\left(g\right):\chi\in\mathcal{E}\right),\]
and define the Kronecker compactification \(\left(K,\tau\right)\) to be
\[K\coloneqq \overline{\tau\left(G\right)}\leq A.\]
Let us now construct the \(G\)-factor \(\kappa_{o}\). We start by defining the map
\[\kappa_{o}:X_{o}\longrightarrow A,\quad\kappa_{o}\left(x\right)=\left(f_{\chi}\left(x\right):\chi\in\mathcal{E}\right),\]
and note that
\[\kappa_{o}\left(g.x\right)=R_{\tau\left(g\right)}\left(\kappa_{o}\left(x\right)\right)\text{ for all }\left(g,x\right)\in G\times X_{o}.\]
Put the pushforward measure \(\nu\coloneqq \kappa_{o\ast}\mu\) on \(A\), and let \(S\coloneqq \operatorname{supp}\left(\nu\right)\) be the support of \(\nu\) in \(A\), obtained by removing from \(A\) the largest \(\nu\)-null open set. Since rotations are continuous and \(\nu\) is invariant to the rotations \(R_{\tau\left(G\right)}\), then \(S\) becomes an \(R_{\tau\left(G\right)}\)-invariant set. Then \(\left(S,\nu\right)\) with the action \(g.s=R_{\tau\left(g\right)}\left(s\right)\) becomes a probability preserving \(G\)-space, and \(\kappa_{o}:\left(X_{o},\mu_{o}\right)\to\left(S,\nu\right)\) is a \(G\)-factor map. Since \(\left(X_{o},\mu_{o}\right)\) is ergodic, \(\left(S,\nu\right)\) is ergodic as well.

By the continuity of rotations, the action of \(\tau\left(G\right)\) extends to an action of \(K\), and thus \(\left(S,\nu\right)\) becomes an ergodic probability preserving \(K\)-space. Since \(K\) is compact and acts continuously, the \(K\)-orbit \(R_{K}\left(s\right)\) is compact in \(S\) for every \(s\in S\). We claim that \(R_{K}\left(s\right)=S\) for \(\nu\)-a.e. \(s\in S\). To see this, let \(\mathrm{d}\) be a compatible invariant metric on \(A\), and for every \(s\in S\) we have the bounded \(K\)-invariant Borel function \(\mathrm{d}_{s}:S\to\mathbb{R}\), \(\mathrm{d}_{s}\left(s^{\prime}\right)\coloneqq\mathrm{d}\left(s^{\prime},R_{K}\left(s\right)\right)\), and by the ergodicity it is constant for \(\nu\)-a.e. \(s\in S\). At the same time, by the invariance of \(\mathrm{d}\) we have \(\mathrm{d}_{s}\circ R_{k}=\mathrm{d}_{k.s}\) for all \(s\in S\) and \(k\in K\). Then again by the ergodicity, on a \(\nu\)-conull set of \(s\in S\), the constants \(\mathrm{d}_{s}\) are all the same, and hence \(\mathrm{d}_{s}\equiv 0\), showing that \(R_{K}\left(s\right)=S\).

We thus pick some \(s_{o}\in S\) such that \(R_{K}\left(s_{o}\right)=S\), and rotate \(\kappa_{o}\) so to take values in \(K\), via
\[\kappa:X_{o}\to K,\quad \kappa\left(x\right)\coloneqq R_{-s_{o}}\left(\kappa_{o}\left(x\right)\right).\]
At the same time, \(\left(R_{-s_{o}}\right)_{\ast}\nu\) is a Borel \(K\)-invariant measure on \(K\), and therefore, using the uniqueness of the Haar measure, \(\left(R_{-s_{o}}\right)_{\ast}\nu=m_{K}\), showing that \(\nu=m_{K}\) and \(S=K\). One can routinely verify that Properties (1) and (2) hold, making \(\kappa:\left(X_{o},\mu\right)\to\left(K,m_{K}\right)\) the desired \(G\)-factor map. Finally, Property (3) follows since all \(G\)-eigenfunctions are, modulo \(\mu\), constant multiples of the elements of \(\mathbf{E}\).
\end{proof}

The following simple lemma allows in some situations to make a reduction to the Kronecker factor of a countable dense subgroup.

\begin{lem}\label{lem:gamkro}
Let \(G\) be an lcsc abelian group and \(\Gamma<G\) a countable dense subgroup. Denote by \(\Gamma_{\mathrm{d}}\) the group \(\Gamma\) equipped with the discrete topology. Then for every ergodic probability preserving \(G\)-space \(\left(X,\mu\right)\), all \(\Gamma_{\mathrm{d}}\)-eigenfunctions\footnote{This means with respect to the subaction of \(\Gamma\) on \(\left(X,\mu\right)\), and with eigencharacters from \(\widehat{\Gamma_{\mathrm{d}}}\).} are also \(G\)-eigenfunctions. Consequently, \[\mathcal{K}_{G}=\mathcal{K}_{\Gamma_{\mathrm{d}}}.\]
\end{lem}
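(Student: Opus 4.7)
The key input is the standard fact that for any probability-preserving action of an lcsc group \(G\) on \(\left(X,\mu\right)\), the induced (Koopman) unitary representation on \(L^{2}\left(\mu\right)\), given by \(U_{g}f=f\circ g^{-1}\), is strongly continuous. Combining this with the density of \(\Gamma\) in \(G\) should allow one to promote the eigenfunction relation from \(\Gamma\) to all of \(G\) and, simultaneously, to extend the a priori discontinuous eigencharacter \(\chi\in\widehat{\Gamma_{\mathrm{d}}}\) to a continuous character \(\tilde\chi\in\widehat{G}\).

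Concretely, I would fix a \(\Gamma_{\mathrm{d}}\)-eigenfunction \(f\in L^{2}\left(\mu\right)\setminus\{0\}\) with eigencharacter \(\chi\colon\Gamma\to\mathbb{T}\), so that \(U_{\gamma}f=\overline{\chi\left(\gamma\right)}f\) for every \(\gamma\in\Gamma\). Given an arbitrary \(g\in G\), pick a sequence \(\gamma_{n}\to g\) in \(G\); by strong continuity, \(U_{\gamma_{n}}f\to U_{g}f\) in \(L^{2}\left(\mu\right)\). Since \(U_{\gamma_{n}}f=\overline{\chi\left(\gamma_{n}\right)}f\) with \(\overline{\chi\left(\gamma_{n}\right)}\in\mathbb{T}\), any subsequential limit \(c\) of \(\overline{\chi\left(\gamma_{n}\right)}\) satisfies \(U_{g}f=cf\); since \(f\neq 0\), the scalar \(c\) is determined by \(U_{g}f\), so the full sequence converges to a single \(c\in\mathbb{T}\) independent of the approximating sequence. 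Setting \(\tilde\chi\left(g\right)\coloneqq\overline{c}\) defines a map \(G\to\mathbb{T}\) extending \(\chi\). It is a homomorphism because \(U_{gh}f=U_{g}U_{h}f\) forces \(\overline{\tilde\chi\left(gh\right)}f=\overline{\tilde\chi\left(g\right)}\,\overline{\tilde\chi\left(h\right)}f\), and it is continuous because \(g\mapsto U_{g}f\) is continuous in \(L^{2}\) while \(f\neq 0\). Thus \(\tilde\chi\in\widehat{G}\) and \(f\) is a \(G\)-eigenfunction with eigencharacter \(\tilde\chi\), as desired.

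The consequence \(\mathcal{K}_{G}=\mathcal{K}_{\Gamma_{\mathrm{d}}}\) then follows immediately: the inclusion \(\mathcal{K}_{G}\subseteq\mathcal{K}_{\Gamma_{\mathrm{d}}}\) is trivial, since every \(G\)-eigenfunction with eigencharacter \(\tilde\chi\in\widehat{G}\) is a \(\Gamma_{\mathrm{d}}\)-eigenfunction with eigencharacter \(\tilde\chi|_{\Gamma}\in\widehat{\Gamma_{\mathrm{d}}}\); the reverse inclusion is exactly the content of the first part of the lemma, so the two \(\sigma\)-algebras generated by the respective families of eigenfunctions coincide.

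I do not anticipate any serious obstacles in this proof. The two points to be honest about are (i) the strong continuity of the Koopman representation, which is a classical fact about lcsc probability-preserving actions, and (ii) the reduction from subsequential to full convergence of \(\overline{\chi\left(\gamma_{n}\right)}\), which follows cleanly from uniqueness of the \(L^{2}\)-limit of \(U_{\gamma_{n}}f\) together with \(f\neq 0\). I note that the ergodicity of \(\left(X,\mu\right)\) plays no explicit role in the argument; it is simply inherited from the ambient hypothesis.
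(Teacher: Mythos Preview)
Your proposal is correct and follows essentially the same approach as the paper: both arguments use strong continuity of the Koopman representation together with density of \(\Gamma\) in \(G\) to extend the eigencharacter \(\chi\) to a continuous character \(\tilde\chi\in\widehat{G}\) and verify that \(f\) remains an eigenfunction. The paper first shows \(\chi\) is continuous on \(\Gamma\) in the subspace topology (via the identity \(\lVert f(\gamma.\cdot)-f(\gamma'.\cdot)\rVert_{L^{2}}=\lvert\chi(\gamma)-\chi(\gamma')\rvert\lVert f\rVert_{L^{2}}\)) and then extends by density, whereas you construct \(\tilde\chi(g)\) directly as the limit of \(\chi(\gamma_{n})\) along \(\gamma_{n}\to g\); these are minor packaging differences, and your observation that ergodicity is not used is accurate.
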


\begin{proof}
Let \(f:X\to\mathbb{C}\) be a \(\Gamma_{\mathrm{d}}\)-eigenfunction with an eigencharacter \(\chi\in\widehat{\Gamma_{\mathrm{d}}}\). Then for all \(\gamma,\gamma^{\prime}\in\Gamma\),
\begin{equation}\label{eq:Gamma}
\left\Vert f\left(\gamma.\left(\cdot\right)\right)-f\left(\gamma^{\prime}.\left(\cdot\right)\right)\right\Vert_{L^{2}\left(\mu\right)}=\left|\chi\left(\gamma\right)-\chi\left(\gamma^{\prime}\right)\right|\left\Vert f\right\Vert_{L^{2}\left(\mu\right)} .
\end{equation}
Since the Koopman representation of \(G\) on \(L^{2}\left(\mu\right)\) is continuous, its restriction to \(\Gamma\) (now with the subgroup topology) is sequentially continuous. It then follows from \eqref{eq:Gamma} that \(\chi:\Gamma\to\mathbb{T}\) is sequentially continuous in the subgroup topology. Since \(\Gamma\) is dense in \(G\), there is a unique continuous extension of \(\chi\) to \(G\), denoted \(\widetilde{\chi}:G\to\mathbb{T}\), and one can verify that \(\widetilde{\chi}\in\widehat{G}\). Finally, since the continuous map
\[G\longrightarrow\mathbb{R},\quad g\mapsto\left\Vert \widetilde{\chi}\left(g\right)f-f\left(g.\left(\cdot\right)\right)\right\Vert_{L^{2}\left(\mu\right)},\]
vanishes on \(\Gamma\), it is the zero map. Thus, \(f\) is a \(G\)-eigenfunction with eigencharacter \(\widetilde{\chi}\in\widehat{G}\).
\end{proof}

We need the following two technical tools. The first can be found in~\cite[Proposition 4.1]{BjFi2019}, whose proof holds in the following general setting.

\begin{lem}\label{lem:ijprime}
Let \(K\) be a compact abelian group with Haar measure \(m_{K}\), and \(\Gamma<K\) be a countable dense subgroup. For all measurable sets \(I,J\subseteq K\) there are \(m_{K}\)-conull Borel subsets \(I_{o}\subseteq I,J_{o}\subseteq J\) with
\[\Gamma.\left(I_{o}\times J_{o}\right)=K.\left(I_{o}\times J_{o}\right)\text{ modulo }m_{K}\otimes m_{K}.\]
\end{lem}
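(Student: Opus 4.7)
The plan is to apply the $m_K\otimes m_K$-preserving automorphism $(a,b)\mapsto(b,c)$ of $K\times K$ with $c\coloneqq a-b$. Under this change of coordinates, both the diagonal $K$-action and the diagonal $\Gamma$-action become translation in the $b$-coordinate alone, with $c$ frozen. Unwinding definitions, for Borel $I',J'\subseteq K$ the $c$-section of $K\cdot(I'\times J')$ equals all of $K$ when $c\in I'-J'$ and is empty otherwise, while the $c$-section of $\Gamma\cdot(I'\times J')$ equals $\Gamma+(J'\cap(I'-c))$. Since $\Gamma<K$ is countable and dense, its translation action on $(K,m_K)$ is ergodic (any $\Gamma$-invariant $L^2$ function is $K$-invariant by continuity of translation on $L^{2}(m_K)$ and density of $\Gamma$, hence constant). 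Thus whenever $m_K(J'\cap(I'-c))>0$ the set $\Gamma+(J'\cap(I'-c))$ is $m_K$-conull in $K$, matching the $K$-section modulo $m_K$.

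By Fubini it therefore suffices to construct $m_K$-conull Borel subsets $I_o\subseteq I$ and $J_o\subseteq J$ with the property that $m_K(J_o\cap(I_o-c))>0$ for every $c\in I_o-J_o$; when $c\notin I_o-J_o$ one automatically has $J_o\cap(I_o-c)=\emptyset$, matching the empty $K$-section. After replacing $I$ and $J$ by Borel subsets of full measure, take $I_o\subseteq I$ and $J_o\subseteq J$ to be the subsets of Lebesgue density points of $I$ and $J$ respectively, computed with respect to a fixed neighborhood basis at $e_K$. By the Lebesgue differentiation theorem these subsets are $m_K$-conull. If now $c=i-j$ with $i\in I_o$ and $j\in J_o$, then $i=j+c$ is a density point of both $I$ and $J+c$ (the latter by translation invariance of the basis), hence of $I\cap(J+c)$, so that $m_K(I\cap(J+c))>0$. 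Since $I_o,J_o$ agree with $I,J$ up to $m_K$-null sets, this yields $m_K(I_o\cap(J_o+c))>0$, as required.

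The main technical point is the validity of the Lebesgue differentiation theorem on a general compact metrizable abelian group $K$. This can be handled by fixing a translation-invariant compatible metric and invoking a Vitali-type covering argument after verifying the doubling property of $m_K$ along that metric. A more uniformly applicable alternative is to choose any refining sequence of finite Borel partitions $\mathcal{P}_n$ whose joined $\sigma$-algebra is $\mathcal{B}_K$, and to use the martingale convergence theorem to extract density points along $(\mathcal{P}_n)$; the density-point argument above adapts to this weaker notion without essential change.
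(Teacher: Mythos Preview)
Your reduction is correct: after the measure-preserving automorphism $(a,b)\mapsto(b,a-b)$ both diagonal actions become translation in the first coordinate with $c=a-b$ frozen, and Fubini together with the ergodicity of $\Gamma$ on $(K,m_K)$ reduces the lemma to finding conull $I_o\subseteq I$, $J_o\subseteq J$ such that $m_K\bigl(J_o\cap(I_o-c)\bigr)>0$ for every $c\in I_o-J_o$. Taking density-$1$ points with respect to a translation-invariant basis of identity neighborhoods is the right construction, and your inclusion--exclusion at the common point $i=j+c$ is valid. (The paper does not argue here but defers to \cite[Proposition~4.1]{BjFi2019}.)

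The gap is in the two justifications you offer for the existence of density-$1$ points on a general compact metrizable abelian $K$. Doubling can genuinely fail: for $K=\mathbb{T}^{\mathbb{N}}$ with the natural product metric one computes $m_K(B_{2r})/m_K(B_r)\to\infty$ as $r\to0$, so the Vitali route is unavailable. And the martingale route along an \emph{arbitrary} refining sequence $(\mathcal{P}_n)$ of finite Borel partitions does not ``adapt without essential change'': the step ``$j$ a $(\mathcal{P}_n)$-density point of $J$ $\Rightarrow$ $j+c$ a $(\mathcal{P}_n)$-density point of $J+c$'' requires $P_n(j+c)=P_n(j)+c$, which fails for generic partitions. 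A correct repair is to build the differentiation basis from the group structure: writing $K=\varprojlim K/K_n$ with each $K/K_n$ a compact abelian Lie group, the family $U_{n,m}=\pi_n^{-1}\bigl(B_m^{(n)}\bigr)$ of pullbacks of small balls in the finite-dimensional quotients is translation-covariant; martingale convergence along the filtration $\pi_n^{-1}(\mathcal{B}_{K/K_n})$ together with classical Lebesgue differentiation on each $K/K_n$ then furnishes, for every pair $(i,j)\in I_o\times J_o$, some $U_{n,m}$ on which both relevant densities exceed $3/4$, which is all your argument requires.
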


The following notions can be put for a general \(G\)-factor, but we focus on the Kronecker factor.

\begin{defn}
Let \(\left(X,\mu\right)\) be an ergodic probability preserving \(G\)-space. The {\bf Kronecker shadow} of a Borel set \(A\) is defined up to a \(\mu\)-null set by
\[A_{\mathcal{K}_{G}}\coloneqq \{x\in X:\mu\left(\mathbf{1}_{A}\mid\mathcal{K}_{G}\right)\left(x\right)>0\},\]
where \(\mu\left(\mathbf{1}_{A}\mid\mathcal{K}_{G}\right)\) is a pointwise-defined version of the conditional expectation associated with \(\mathcal{K}_{G}\).
\end{defn}

The following lemma follows from~\cite[Lemma 2.4]{BjFi2019}, using that every Borel set in \(X\times X\) invariant to the diagonal action of \(G\) belongs to \(\mathcal{K}_{G}\otimes\mathcal{K}_{G}\) modulo \(\mu\otimes\mu\) (see~\cite[Lemma 2.3]{BjFi2019}).

\begin{lem}\label{lem:shad}
Let \(\Gamma\) be a countable discrete abelian group and \(\left(X,\mu\right)\) a probability preserving \(\Gamma\)-space. Then for all Borel sets \(A\) and \(B\) in \(X\) we have:
\begin{enumerate}
    \item \(\left(A\times B\right)_{\mathcal{K}_{\Gamma}}=A_{\mathcal{K}_{\Gamma}}\times B_{\mathcal{K}_{\Gamma}}\) modulo \(\mu\).
    \item \(\Gamma\diag\left(A\times B\right)=\Gamma\diag\left(A_{\mathcal{K}_{\Gamma}}\times B_{\mathcal{K}_{\Gamma}}\right)\) modulo \(\mu\).
\end{enumerate}
\end{lem}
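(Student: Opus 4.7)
The plan is to reduce both parts of Lemma~\ref{lem:shad} to the cited fact that any diagonally-$\Gamma$-invariant Borel set $C \subseteq X \times X$ is, modulo $\mu \otimes \mu$, measurable with respect to $\mathcal{K}_\Gamma \otimes \mathcal{K}_\Gamma$; the remainder is an exercise in conditional expectations on a product space. The key preliminary computation I would carry out is a Fubini-type identity: using independence of the two coordinates under $\mu \otimes \mu$, for arbitrary Borel $A, B \subseteq X$ one has
\[\mu \otimes \mu\bigl(\mathbf{1}_{A \times B} \,\big|\, \mathcal{K}_\Gamma \otimes \mathcal{K}_\Gamma\bigr)(x,y) = \mu\bigl(\mathbf{1}_A \,\big|\, \mathcal{K}_\Gamma\bigr)(x) \cdot \mu\bigl(\mathbf{1}_B \,\big|\, \mathcal{K}_\Gamma\bigr)(y),\]
whose positivity set is exactly $A_{\mathcal{K}_\Gamma} \times B_{\mathcal{K}_\Gamma}$ modulo null.

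For part (1), I would combine this with the cited key fact: since the Kronecker factor of $(X \times X, \mu \otimes \mu)$ under the diagonal $\Gamma$-action is contained in $\mathcal{K}_\Gamma \otimes \mathcal{K}_\Gamma$ modulo null, the conditional expectation of $\mathbf{1}_{A \times B}$ with respect to either $\sigma$-algebra has the same positivity set. This identifies $(A \times B)_{\mathcal{K}_\Gamma} = A_{\mathcal{K}_\Gamma} \times B_{\mathcal{K}_\Gamma}$ modulo null.

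For part (2), one inclusion is immediate from the modulo-null containments $A \subseteq A_{\mathcal{K}_\Gamma}$ and $B \subseteq B_{\mathcal{K}_\Gamma}$. For the reverse, I would set $C := \Gamma \diag (A \times B)$ and note that $C$ is diagonal-invariant; hence by the cited fact $C$ is $\mathcal{K}_\Gamma \otimes \mathcal{K}_\Gamma$-measurable modulo null, so $\mu \otimes \mu(\mathbf{1}_C \mid \mathcal{K}_\Gamma \otimes \mathcal{K}_\Gamma) = \mathbf{1}_C$ modulo null. Combining $\mathbf{1}_{A \times B} \leq \mathbf{1}_C$, monotonicity of conditional expectation, and the Fubini identity above yields
\[\mu\bigl(\mathbf{1}_A \,\big|\, \mathcal{K}_\Gamma\bigr)(x) \cdot \mu\bigl(\mathbf{1}_B \,\big|\, \mathcal{K}_\Gamma\bigr)(y) \leq \mathbf{1}_C(x,y)\]
modulo null; taking positivity sets gives $A_{\mathcal{K}_\Gamma} \times B_{\mathcal{K}_\Gamma} \subseteq C$ modulo null, and one application of $\Gamma \diag$ to both sides closes the loop.

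The main, though rather minor, obstacle I anticipate is bookkeeping around the different Kronecker $\sigma$-algebras on $X$ versus $X \times X$: the cited fact provides containment of the diagonal-action Kronecker factor on $X \times X$ inside $\mathcal{K}_\Gamma \otimes \mathcal{K}_\Gamma$, and one needs to justify that this containment is sufficient to identify the Kronecker shadow computed on $X \times X$ with the product of the individual shadows computed on $X$. Once this identification is rigorously in place, both parts follow from the conditional-expectation manipulations outlined above.
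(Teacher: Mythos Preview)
Your approach mirrors the paper's, which simply cites \cite[Lemma~2.4]{BjFi2019} together with the key input \cite[Lemma~2.3]{BjFi2019} (diagonally invariant sets lie in \(\mathcal{K}_\Gamma\otimes\mathcal{K}_\Gamma\)); you are essentially reconstructing that argument. Your treatment of part~(2) is correct and is exactly how one deduces it from the cited fact.

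For part~(1), however, there is a genuine imprecision. You write that ``the Kronecker factor of \((X\times X,\mu\otimes\mu)\) under the diagonal \(\Gamma\)-action is contained in \(\mathcal{K}_\Gamma\otimes\mathcal{K}_\Gamma\)'', and later that the cited fact ``provides containment of the diagonal-action Kronecker factor on \(X\times X\) inside \(\mathcal{K}_\Gamma\otimes\mathcal{K}_\Gamma\)''. Neither is what the cited fact says: \cite[Lemma~2.3]{BjFi2019} concerns the \emph{invariant} \(\sigma\)-algebra of the diagonal action, not its Kronecker factor, and these are different objects. In fact the easy containment goes the other way, \(\mathcal{K}_\Gamma\otimes\mathcal{K}_\Gamma\subseteq\mathcal{K}_\Gamma^{\mathrm{diag}}\), since products of eigenfunctions are diagonal eigenfunctions. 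With only a one-sided containment, your step ``the conditional expectation of \(\mathbf{1}_{A\times B}\) with respect to either \(\sigma\)-algebra has the same positivity set'' does not follow: for \(\mathcal{F}_1\subseteq\mathcal{F}_2\) one gets only \(\{E[f\mid\mathcal{F}_2]>0\}\subseteq\{E[f\mid\mathcal{F}_1]>0\}\), which yields just one inclusion in part~(1). What is actually needed is the equality \(\mathcal{K}_\Gamma^{\mathrm{diag}}=\mathcal{K}_\Gamma\otimes\mathcal{K}_\Gamma\) modulo null; this is standard (and can itself be deduced from the invariant-set fact by multiplying a diagonal eigenfunction against a single-variable eigenfunction of the same character), but it is a separate ingredient you should name rather than conflate with the cited lemma. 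Once that identification is in place, your Fubini computation finishes part~(1) cleanly.
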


\subsection{The main inequality, revisited}

Let us start with a basic observation. Recall that for an lcsc abelian group \(G\) and an ergodic probability preserving \(G\)-space, triviality of the Kronecker factor is equivalent to weak mixing (see~\cite[Theorem 2.36]{EinsiedlerWard}).

\begin{prop}
Let \(G\) be a non-discrete lcsc abelian group and \(\left(X,\mu,Y\right)\) a probability preserving \(G\)-space. If \(\left(X,\mu\right)\) admits a doubly separated cross section then its Kronecker factor \(\mathcal{K}_{G}\) is nontrivial.
\end{prop}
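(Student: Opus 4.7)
The plan is to argue by contradiction: assume \(\mathcal{K}_{G}\) is trivial, so \((X,\mu)\) is weakly mixing, equivalently the diagonal \(G\)-action on \((X\times X,\mu\otimes\mu)\) is ergodic. The goal is then to exhibit a \(G\diag\)-invariant Borel subset of \(X\times X\) whose \(\mu\otimes\mu\)-measure lies strictly in \((0,1)\).

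The central object is the intersection cross section \(Y^{[2]}=G\diag(Y\times Y)\subset X\times X\). This set is \(G\diag\)-invariant by construction and, as recalled in Section~\ref{sct:intcovol}, is simultaneously a cross section for the first-coordinate action \(G\first\) on \((X\times X,\mu\otimes\mu)\). Double separation of \(Y\) means precisely that \(\Lambda_{Y^{[2]}}=\Lambda_Y^{2}\) is uniformly discrete, so \(Y^{[2]}\) is a separated cross section, and hence has finite intensity \(\iota_{\mu\otimes\mu}(Y^{[2]})=I_\mu(Y)<+\infty\). Choose a separating identity neighborhood \(U\subset G\) for \(Y^{[2]}\), small enough that~\cite[Proposition~4.8]{AvBjCuI} applies to the transverse \(G\)-space \((X\times X,\mu\otimes\mu,Y^{[2]})\); this yields
\[(\mu\otimes\mu)(U\first Y^{[2]})=m_G(U)\cdot I_\mu(Y).\]

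The crucial observation is that \(U\first Y^{[2]}\) is itself \(G\diag\)-invariant. This follows because the first-coordinate and diagonal actions commute, \(g\diag(u\first z)=u\first(g\diag z)\) for all \(u,g\in G\) and \(z\in X\times X\), and \(Y^{[2]}\) is \(G\diag\)-invariant; equivalently, one checks directly that \(U\first Y^{[2]}=G\diag(U.Y\times Y)\), which is manifestly \(G\diag\)-invariant.

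Finally, because \(G\) is non-discrete we have \(m_G(\{e_G\})=0\), so we may shrink \(U\) to further arrange \(m_G(U)<I_\mu(Y)^{-1}\) while preserving both the separation property and the applicability of Proposition~4.8. Then \(U\first Y^{[2]}\) is a \(G\diag\)-invariant Borel set with \(\mu\otimes\mu\)-measure strictly between \(0\) and \(1\), contradicting ergodicity of the diagonal action. Hence \(\mathcal{K}_G\) is nontrivial. The only substantive input beyond unwinding the definitions is the Campbell-type formula of~\cite[Proposition~4.8]{AvBjCuI}; no step presents a genuine obstacle once one notices the commutation of the first-coordinate and diagonal actions.
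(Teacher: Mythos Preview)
Your proof is correct and follows essentially the same approach as the paper's: both argue by contradiction from weak mixing, exhibit a \(G\diag\)-invariant thickening of \(Y^{[2]}\), compute its \(\mu\otimes\mu\)-measure via the transverse measure formula, and shrink the thickening using non-discreteness to force the measure into \((0,1)\). Your version is marginally cleaner in that you work directly with \(U\first Y^{[2]}\) and obtain an exact equality, whereas the paper uses \(G\diag(U.Y\times U.Y)=(U-U)\first Y^{[2]}\) and bounds it by \(m_G(V)\cdot I_\mu(Y)\) via the inclusion \(U-U\subseteq V\).
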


\begin{proof}
On one hand, the double separation of \(Y\) implies that \(Y^{\left[2\right]}\) is a separated cross section for \(\left(X^{\otimes2},\mu^{\otimes2}\right)\) in the action of \(G\) on the first coordinate, and therefore
\[I_{\mu}\left(Y\right)=\big(\mu^{\otimes2}\big)_{Y^{\left[2\right]}}\big(Y^{\left[2\right]}\big)=\iota_{\mu^{\otimes2}}\big(Y^{\left[2\right]}\big)<+\infty.\]
(see~\cite[Theorem 7.7]{AvBjCuI}). On the other hand, pick any identity neighborhood \(e_{G}\in V\subset G\) which is sufficiently small to be a separating neighborhood for \(Y^{\left[2\right]}\), and then, for every further identity neighborhood \(e_{G}\in U\subseteq V\) satisfying \(U-U\subseteq V\), we have
\begin{equation}\label{eq:vid}
\begin{aligned}
m_{G}\left(V\right)\cdot I_{\mu}\left(Y\right)	
&=m_{G}\left(V\right)\cdot\left(\mu^{\otimes 2}\right)_{Y^{\left[2\right]}}\big(Y^{\left[2\right]}\big)=\mu^{\otimes 2}\big(V\first Y^{\left[2\right]}\big)\\
&\geq\mu^{\otimes 2}\big(\left(U-U\right)\first Y^{\left[2\right]}\big)=\mu^{\otimes 2}\left(\left(U-U\right)\first G\diag\left(Y\times Y\right)\right)\\
&=\mu^{\otimes 2}\left(G\diag\left(U.Y\times U.Y\right)\right),
\end{aligned}
\end{equation}
where the first and second equalities are by Definition~\ref{dfn:intvol} of the intersection covolume and the transverse measure (using that \(V\) is a separating neighborhood for \(Y^{\left[2\right]}\)), and the last equality is by commutativity. If \(\left(X,\mu\right)\) was weakly mixing, then since \(G\diag\left(U.Y\times U.Y\right)\) is a \(G\)-invariant set of \(\mu^{\otimes 2}\)-positive measure (already \(U.Y\times U.Y\) has \(\mu^{\otimes 2}\)-positive measure) in \(\left(X^{\otimes 2},\mu^{\otimes 2}\right)\), it is necessarily \(\mu^{\otimes 2}\)-conull. Then from \eqref{eq:vid} it follows that \(m_{G}\left(V\right)\cdot I_{\mu}\left(Y\right)\geq1\). Since \(G\) is non-discrete, \(V\) can be chosen with an arbitrarily small Haar measure, hence \(I_{\mu}\left(Y\right)=+\infty\), which is a contradiction.
\end{proof}

We will now refine Theorem~\ref{mthm:mineq} using the Kronecker factor of \(\left(X,\mu\right)\) as in Theorem~\ref{thm:krongrp}.

\begin{thm}\label{thm:mineq}
Let \(G\) be a \hyperlink{classQ}{class \(\mathcal{Q}\)} group, \(\left(X,\mu,Y\right)\) an ergodic transverse \(G\)-space where \(Y\) is doubly separated. Let \(\mathcal{K}_{G}\) be the associated Kronecker factor and \(\left(K,\tau\right)\) the corresponding Kronecker compactification. Then for every sufficiently small compact open subgroup \(V<G\) with Kronecker shadow \(S\coloneqq\left(V.Y\right)_{\mathcal{K}_{G}}\), there is a \(\mu\)-conull set \(S_{o}\subseteq S\) such that the following inequalities hold:
\begin{equation}\label{eq:refineq}
m_{G}\left(V\right)\cdot I_{\mu}\left(Y\right)\geq m_{K}\left(S_{o}-S_{o}\right)\geq2\cdot m_{K}\left(S_{o}\right)\geq 2\cdot m_{G}\left(V\right)\cdot\iota_{\mu}\left(Y\right).
\end{equation}
\end{thm}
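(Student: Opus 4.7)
The plan is to work inside the Kronecker factor $\kappa:(X,\mu)\to(K,m_{K})$ of Theorem~\ref{thm:krongrp}, exploiting that $K$ is a connected compact abelian group (Proposition~\ref{prop:classqgrp}(3)) and that $G$ admits a local base at $e_{G}$ of compact open subgroups (Proposition~\ref{prop:classqgrp}(1)). I identify $S=(V.Y)_{\mathcal{K}_{G}}$ with the Borel set $\widetilde{S}\subseteq K$ satisfying $S=\kappa^{-1}(\widetilde{S})$, and will take $S_{o}:=\kappa^{-1}(\widetilde{S}_{o})$ for an appropriately chosen conull Borel $\widetilde{S}_{o}\subseteq\widetilde{S}$. ``Sufficiently small $V$'' will mean a compact open subgroup of $G$ which is a separating neighborhood for $Y$ (hence also for $Y^{[2]}$ under the $G\first$-action, since $\Lambda_{Y}$ is uniformly discrete) and also small enough that $m_{K}(\widetilde{S})\le 1/2$; the latter is achievable because, under the disintegration $\mu=\int_{K}\eta_{k}\,dm_{K}(k)$, the function $q(k):=\eta_{k}(V.Y)$ decreases as $V\downarrow\{e_{G}\}$, so its positivity set $\widetilde{S}=\{q>0\}$ decreases in $m_{K}$-measure.

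The central identification is $V\first Y^{[2]}=G\diag(V.Y\times V.Y)$, from the commutativity of $G$. Combined with the Campbell formula for $Y^{[2]}$ as a separated cross section of the $G\first$-action (see the proof of~\cite[Proposition~6.2]{AvBjCuI}), this gives $\mu^{\otimes 2}(V\first Y^{[2]})=m_{G}(V)\cdot I_{\mu}(Y)$. Since $V\first Y^{[2]}$ is $G\diag$-invariant, Lemma~\ref{lem:gamkro} together with~\cite[Lemma~2.3]{BjFi2019} (applied to a countable dense subgroup $\Gamma_{o}<G$) place it in $\mathcal{K}_{G}\otimes\mathcal{K}_{G}$ modulo $\mu^{\otimes 2}$. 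Because the $K$-diagonally invariant Borel subsets of $K\times K$ are exactly the pullbacks of Borel subsets of $K$ under the difference map $\delta(x_{1},x_{2}):=\kappa(x_{1})-\kappa(x_{2})$, one obtains a Borel $D\subseteq K$ with $V\first Y^{[2]}=\delta^{-1}(D)$ modulo $\mu^{\otimes 2}$. Using $\delta_{\ast}\mu^{\otimes 2}=m_{K}$ this yields $m_{K}(D)=m_{G}(V)\cdot I_{\mu}(Y)$, so the first inequality reduces to exhibiting a conull $\widetilde{S}_{o}\subseteq\widetilde{S}$ with $\widetilde{S}_{o}-\widetilde{S}_{o}\subseteq D$ modulo $m_{K}$-null.

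To produce such $\widetilde{S}_{o}$, I apply Lemma~\ref{lem:ijprime} with $I=J=\widetilde{S}$ and the countable dense subgroup $\tau(\Gamma_{o})<K$, obtaining a conull Borel $\widetilde{S}_{o}\subseteq\widetilde{S}$ with $\tau(\Gamma_{o}).(\widetilde{S}_{o}\times\widetilde{S}_{o})=K.(\widetilde{S}_{o}\times\widetilde{S}_{o})$ modulo $m_{K}^{\otimes 2}$. Passing to fibres of $(a,b)\mapsto a-b$: for each $k\in\widetilde{S}_{o}-\widetilde{S}_{o}$ the right-hand side occupies the entire fibre, while the left-hand side is $C_{k}+\tau(\Gamma_{o})$ with $C_{k}:=\widetilde{S}_{o}\cap(\widetilde{S}_{o}+k)$. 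The \emph{countability} of $\tau(\Gamma_{o})$ forces $m_{K}(C_{k})>0$ for $m_{K}$-a.e.\ $k\in\widetilde{S}_{o}-\widetilde{S}_{o}$, since an $m_{K}$-null $C_{k}$ would make $C_{k}+\tau(\Gamma_{o})$ a countable union of null sets. On the other hand, disintegrating $\mu^{\otimes 2}$ along $\delta$, the conditional mass of $V.Y\times V.Y$ on the fibre $\delta=k$ is the correlation $\int q(u)\,q(u-k)\,dm_{K}(u)$, which is positive exactly when $m_{K}(\widetilde{S}\cap(\widetilde{S}+k))>0$; since $V.Y\times V.Y\subseteq V\first Y^{[2]}=\delta^{-1}(D)$, such $k$ must lie in $D$. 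Chaining these inclusions yields $\widetilde{S}_{o}-\widetilde{S}_{o}\subseteq D$ modulo $m_{K}$-null, which is the first inequality.

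The remaining two inequalities are comparatively routine. The middle one follows from Kneser's inequality $m_{K}(A+B)\ge\min(m_{K}(A)+m_{K}(B),1)$ valid in the connected compact abelian group $K$, applied with $A=\widetilde{S}_{o},B=-\widetilde{S}_{o}$, together with the bound $2m_{K}(\widetilde{S}_{o})\le 2m_{K}(\widetilde{S})\le 1$. The last is immediate: $m_{K}(\widetilde{S}_{o})=m_{K}(\widetilde{S})=\mu(S)\ge\mu(V.Y)=m_{G}(V)\cdot\iota_{\mu}(Y)$, using $V.Y\subseteq S$ modulo $\mu$ and that $V$ is separating. The principal obstacle is the first inequality: bridging the Kronecker-factor description of $V\first Y^{[2]}$ with the actual difference set $\widetilde{S}_{o}-\widetilde{S}_{o}$, for which the countability of $\tau(\Gamma_{o})$ in Lemma~\ref{lem:ijprime} is the decisive ingredient ruling out the ``pathological'' scenario that $C_{k}$ is $m_{K}$-null on a positive-measure subset of $\widetilde{S}_{o}-\widetilde{S}_{o}$.
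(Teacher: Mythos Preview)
Your argument is essentially correct and follows the same skeleton as the paper's proof: pass to the Kronecker factor, invoke Lemma~\ref{lem:ijprime} for a countable dense subgroup, then apply Kneser's inequality in the connected compactification $K$. Two points deserve comment.

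First, your justification that $m_{K}(\widetilde{S})\le 1/2$ for small $V$ is incomplete. The shadows $\widetilde{S}=\{q>0\}$ do decrease as $V\downarrow\{e_{G}\}$, but pointwise $q_{n}(k)\to 0$ does not force $m_{K}(\{q_{n}>0\})\to 0$; the intersection $\bigcap_{n}\widetilde{S}_{n}$ could well have positive measure. The paper avoids this by reversing the logic: it first establishes the leftmost inequality $m_{K}(S_{o}-S_{o})\le m_{G}(V)\cdot I_{\mu}(Y)$ (which needs no a priori bound on $m_{K}(S_{o})$), then chooses $V$ with $m_{G}(V)\cdot I_{\mu}(Y)<1/2$ and deduces $m_{K}(S_{o})\le m_{K}(S_{o}-S_{o})<1/2$ a posteriori. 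Your proof is repaired the same way.

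Second, your route to that leftmost inequality---introducing $D\subseteq K$ with $V\first Y^{[2]}=\delta^{-1}(D)$ and then running a fibrewise disintegration argument to show $\widetilde{S}_{o}-\widetilde{S}_{o}\subseteq D$---is more elaborate than necessary. The paper does it in one stroke using Lemma~\ref{lem:shad}(2): bounding $\mu^{\otimes 2}(G\diag(V.Y\times V.Y))\ge \mu^{\otimes 2}(\Gamma_{o}\diag(V.Y\times V.Y))$, the latter equals $\mu^{\otimes 2}(\Gamma_{o}\diag(S\times S))$ by Lemma~\ref{lem:shad}, which after pushing through $\kappa\times\kappa$ is $m_{K}^{\otimes 2}(\tau(\Gamma_{o})\diag(\widetilde{S}\times\widetilde{S}))$; Lemma~\ref{lem:ijprime} then identifies this with $m_{K}^{\otimes 2}(K\diag(\widetilde{S}_{o}\times\widetilde{S}_{o}))=m_{K}(\widetilde{S}_{o}-\widetilde{S}_{o})$. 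Your disintegration step is, in effect, an inline reproof of Lemma~\ref{lem:shad}(2).
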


As will be shown in the proof, Theorem~\ref{thm:mineq} holds for every \(V\) that is a separating neighborhood of \(Y^{\left[2\right]}\), that is, \(\Lambda_{Y}^{2}\cap V=\{e_{G}\}\).

Recall that since \(G\) is of class \(\mathcal{Q}\), by Proposition~\ref{prop:classqgrp} the Kronecker compactification \(\left(K,\tau\right)\) is connected. Therefore, the central inequality appearing in \eqref{eq:refineq} is the well-known Kneser's inequality~\cite[Satz (2)]{Kneser}, whose extreme cases will be crucial in the later proof of Theorem~\ref{mthm:mineqext}, so we mention it here:

\begin{thm}[Kneser's Inequality]\label{thm:kneser}
Let \(K\) be a compact connected abelian group and \(m_{K}\) its Haar probability measure. Then for every measurable set \(C\subseteq K\) with \(0<m_{K}\left(C\right)\leq1/2\), the following holds:
\[m_{K}\left(C-C\right)\geq 2\cdot m_{K}\left(C\right),\]
with equality if and only if there is a character \(\chi\in\widehat{K}\) and a closed interval \(I\subset\mathbb{T}\) with
\[C=\chi^{-1}\left(I\right)\text{ modulo }m_{K}.\]
\end{thm}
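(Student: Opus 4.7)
The plan is to derive both parts from the classical Kneser--Kemperman structure theorem for sumsets in locally compact abelian groups, combined with the fact that in a compact connected abelian group every proper closed subgroup has zero Haar measure. For the inequality, apply Kneser--Kemperman with $A = C$ and $B = -C$: writing $H \coloneqq \{h \in K : h + (C - C) = C - C\}$ for the closed stabilizer of $C - C$, one has
\[
m_{K}(C - C) \;\geq\; m_{K}(C + H) + m_{K}(-C + H) - m_{K}(H).
\]
Since $K$ is compact and connected, the closed subgroup $H$ is either all of $K$ or has zero Haar measure. In the former case, $C - C$ is $K$-invariant and nonempty, hence equals $K$, so $m_{K}(C - C) = 1 \geq 2 m_{K}(C)$ by the hypothesis $m_{K}(C) \leq 1/2$. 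In the latter case, the bounds $m_{K}(C + H) \geq m_{K}(C)$ and $m_{K}(-C + H) \geq m_{K}(C)$ immediately yield $m_{K}(C - C) \geq 2 m_{K}(C)$.

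The easy direction of the equality characterization is a short computation: if $C = \chi^{-1}(I)$ for some nontrivial $\chi \in \widehat{K}$ and closed interval $I \subset \mathbb{T}$ of length $\ell$, then $\chi(K)$ is a closed connected subgroup of $\mathbb{T}$, hence all of $\mathbb{T}$, so $\chi_{\ast} m_{K} = m_{\mathbb{T}}$; thus $m_{K}(C) = \ell$ and $C - C = \chi^{-1}(I - I)$ with $I - I$ a closed interval of length $2\ell \leq 1$, giving $m_{K}(C - C) = 2\ell = 2 m_{K}(C)$.

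For the hard direction, I would assume $m_{K}(C - C) = 2 m_{K}(C)$ and combine with Kneser--Kemperman to force $m_{K}(C + H) = m_{K}(C)$, so that $C = C + H$ modulo $m_{K}$. Kemperman's classification of equality cases in his inequality then identifies the image of $C$ in the compact connected abelian quotient $K / H$ with the pullback of a closed interval in $\mathbb{T}$ under some character $\overline{\chi} : K / H \to \mathbb{T}$; composing $\overline{\chi}$ with the quotient projection $K \to K / H$ yields the desired character $\chi \in \widehat{K}$ and closed interval $I \subset \mathbb{T}$ with $C = \chi^{-1}(I)$ modulo $m_{K}$.

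The main obstacle is the hard direction of the equality characterization, which rests on Kemperman's structure theorem identifying the extremal configurations in Kneser's inequality. The connectedness of $K$ is essential throughout: it eliminates the intermediate stabilizers of positive measure that would otherwise complicate the analysis, and it guarantees that characters of quotients of $K$ lift to genuine characters of $K$ landing in $\mathbb{T}$. A subtle point is the boundary case $m_{K}(C) = 1/2$, where $C - C = K$ can coexist with equality without any interval structure on $C$; either the hypothesis in the converse direction should be read as $m_{K}(C) < 1/2$ strictly, or a separate analysis is required to handle the endpoint and produce an interval of length exactly $1/2$.
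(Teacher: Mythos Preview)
The paper does not actually prove this theorem: it is stated as a known result and attributed directly to Kneser's original paper \cite[Satz~(2)]{Kneser}, with no proof given. In the paper it serves purely as a black-box tool, invoked in the proof of Theorem~\ref{thm:mineq} (for the inequality) and in Step~1 of the proof of Theorem~\ref{mthm:mineqext} (for the equality characterization). So there is no ``paper's own proof'' to compare against; your proposal is an attempt to supply what the paper simply cites.

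Your sketch via the Kneser--Kemperman stabilizer argument is the standard route and is essentially correct for the inequality and the easy direction of the characterization. For the hard direction you are right that one ultimately leans on Kemperman's structural analysis of critical pairs, which is nontrivial; your outline of passing to $K/H$ and pulling back a character is the right shape, though the details (in particular, showing that the extremal configuration in the connected quotient is genuinely a character-preimage of an arc, rather than one of Kemperman's other types, all of which are ruled out by connectedness) need to be filled in.

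Your observation about the boundary case $m_{K}(C)=1/2$ is sharp and worth noting: as stated, the ``only if'' direction fails there, since any measurable $C$ of measure exactly $1/2$ satisfies $m_{K}(C-C)=1$ by the inequality itself, yet need not be an arc preimage (already in $K=\mathbb{T}$, take $C=[0,1/4]\cup[1/2,3/4]$). The paper is not affected by this, because in every application it arranges $m_{K}(S)<1/2$ strictly (see the choice of $V$ in the proof of Theorem~\ref{thm:mineq} and the use of $m_{K}(S_{n})\to 0$ in Step~1 of the proof of Theorem~\ref{mthm:mineqext}); but you are correct that the equality characterization as quoted should be read under the strict hypothesis $m_{K}(C)<1/2$.
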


\begin{proof}[Proof of Theorem~\ref{thm:mineq}]
As \(G\) is totally disconnected, we may fix an arbitrarily small compact open subgroup \(V<G\). Applying the computation \eqref{eq:vid} with \(U=V\) and using that \(V-V=V\), we obtain
\begin{equation}\label{eq:vid1}
m_{G}\left(V\right)\cdot I_{\mu}\left(Y\right)=\mu^{\otimes2}\left(G\diag\left(V.Y\times V.Y\right)\right).
\end{equation}
Pick some countable dense subgroup \(\Gamma<G\), and recall that \(\mathcal{K}_{G}=\mathcal{K}_{\Gamma_{\mathrm{d}}}\) by Lemma~\ref{lem:gamkro}. Then for the Kronecker shadow \(S\coloneqq \left(V.Y\right)_{\mathcal{K}_{G}}\subseteq K\) we get from \eqref{eq:vid1} and using Lemma~\ref{lem:shad} that
\begin{align*}
m_{G}\left(V\right)\cdot I_{\mu}\left(Y\right)
&=\mu^{\otimes2}\left(G\diag\left(V.Y\times V.Y\right)\right)\\
&\geq\mu\otimes\mu\left(\Gamma\diag\left(V.Y\times V.Y\right)\right)=m_{K}\otimes m_{K}\left(\tau\left(\Gamma\right)\diag\left(S\times S\right)\right).
\end{align*}
Using Lemmas~\ref{lem:ijprime} and~\ref{lem:shad}, there is a \(m_{K}\)-conull measurable set \(S_{o}\subseteq S\) such that \(\tau\left(\Gamma\right).\left(S\times S\right)=K.\left(S_{o}\times S_{o}\right)\) modulo \(m_{K}\otimes m_{K}\). All together, it follows that
\[m_{G}\left(V\right)\cdot I_{\mu}\left(Y\right)\geq m_{K}\otimes m_{K}\left(K.\left(S_{o}\times S_{o}\right)\right)=m_{K}\left(S_{o}-S_{o}\right),\]
where the last equality holds because \(K.\left(S_{o}\times S_{o}\right)\) is the inverse image of \(S_{o}-S_{o}\) under the probability preserving map \(K\times K\to K\), \(\left(k,k^{\prime}\right)\mapsto k-k^{\prime}\). We thus have established the first inequality in \eqref{eq:refineq}. For the second inequality of \eqref{eq:refineq}, assume that \(V\) is sufficiently small so that \(m_{G}\left(V\right)\cdot I_{\mu}\left(Y\right)<1/2\), and therefore \(m_{K}\left(S_{o}\right)\leq m_{K}\left(S_{o}\times S_{o}\right)<1/2\), so the desired inequality is Kneser's inequality~\ref{thm:kneser}. For the last inequality of \eqref{eq:refineq}, we note that in general \(A\subseteq A_{\mathcal{K}_{G}}\) modulo \(\mu\), and therefore
\[m_{K}\left(S_{o}\right)=m_{K}\left(S\right)\leq\mu\left(V.Y\right)=m_{G}\left(V\right)\cdot\mu_{Y}\left(Y\right)=m_{G}\left(V\right)\cdot\iota_{\mu}\left(Y\right).\qedhere\]
\end{proof}

\section{The extremes of the main inequality}

The following section will deal with proving Theorem~\ref{mthm:mineqext}. We start with the outline of the main idea.

\subsection{Outline of the proof of Theorem~\ref{mthm:mineqext}}

Let \(G\) be a class \(\mathcal{Q}\) group. Fix a sequence of compact open subgroups of \(G\),
\[V_{1}\gneq V_{2}\gneq V_{3}\dotsm\text{ such that }\bigcap\nolimits_{n\in\mathbb{N}}V_{n}=\left\{e_{G}\right\}.\]
Suppose \(\left(X,\mu,Y\right)\) is an ergodic transverse \(G\)-space, \(Y\) is doubly separated, such that \(I_{\mu}\left(Y\right)=2\cdot\iota_{\mu}\left(Y\right)\). Then it forces a sequence of equalities in \eqref{eq:refineq} for \(V_{1},V_{2},\dotsc\). By the extremes of Kneser's inequality~\ref{thm:kneser}, one obtains \(G\)-eigenfunctions \(f_{1},f_{2},\dotsc:X\to\mathbb{T}\) ans closed intervals \(\mathbb{T}\supset I_{1}\supseteq I_{2}\supseteq\dotsm\), such that
\[f_{n}^{-1}\left(I_{n}\right)=V_{n}.Y\text{ modulo }\mu\text{ for every }n\in\mathbb{N},\]
and it can be shown that the corresponding eigencharacters \(\chi_{1},\chi_{2},\dotsc\in\widehat{G}\) satisfy
\[V_{n}\leq\ker\chi_{n}\text{ and }\chi_{n}=\chi_{n+1}^{q_{n}}\text{ for some }q_{n}\in\mathbb{Z},\quad n\in\mathbb{N}.\]
With an appropriate choice of the sequence \(V_{n}\), \(n\in\mathbb{N}\), it can be shown that the integers \(q_{n}\), \(n\in\mathbb{N}\), are all primes. Letting \(V_{n}\backslash X\) be the space of \(V_{n}\)-orbits in \(X\),\footnote{\(V_{n}\backslash X\) is a standard Borel space since \(V_{n}\) is compact.} we obtain an inverse limit system
\[\vcenter{\xymatrix{
\left(X,\mu\right) \ar[d]^{\phi}\\ \left(\mathbb{S},m_{\mathbb{S}}\right)
}}
\,\,\,=\varprojlim\,\,\,\,
\vcenter{\xymatrix@C=3.8em{X/V_{1}\ar[d]^{f_{1}} & \dotsm\ar@{_{(}->}[l] & X/V_{n}\ar@{_{(}->}[l]\ar[d]^{f_{n}} & X/V_{n+1}\ar@{_{(}->}[l]\ar[d]^{f_{n+1}} & \ar@{_{(}-->}[l]\\
\mathbb{T} & \dotsm\ar[l]_{z^{q_{1}}\mapsfrom z} & \mathbb{T}\ar[l]_{z^{q_{n-1}}\mapsfrom z} & \mathbb{T}\ar[l]_{z^{q_{n}}\mapsfrom z} & \ar@{-->}[l]
}
}\]
We will show that the solenoid \(\mathbb{S}\coloneqq \varprojlim\big(\mathbb{T}\xleftarrow{z^{q_{1}}\mapsfrom z}\mathbb{T}\xleftarrow{z^{q_{2}}\mapsfrom z}\mathbb{T}\dashleftarrow\big)\) is a transitive \(G\times\mathbb{R}\)-space with discrete stabilizers. Moreover, \(\mathbb{S}\) is a Borel \(G\)-space admitting a cross section arising from the intervals \(I_{1}\supseteq I_{2}\supseteq\dotsm\), in such a way that it becomes the required transverse cut--and--project \(G\)-space.

\subsection{Some lemmas on characters}

For \(\delta>0\) let us denote the open interval
\[J_{\delta}\coloneqq\left\{ z\in\mathbb{T}:\operatorname{dist}\left(1,z\right)<\delta\right\},\]
where \(\operatorname{dist}\) stands for the usual arc length on \(\mathbb{T}\). For the rest of this discussion, we fix an arbitrary lcsc abelian group \(G\).

\begin{lem}\label{lem:genlem1}
For every character \(\chi\in\widehat{G}\) and identity neighborhood \(V\subseteq G\), there exists \(\delta>0\) such that
\[\chi^{-1}\left(J_{\delta}\right)\subseteq V\ker\chi.\]
\end{lem}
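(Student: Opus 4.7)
The plan is to reduce the inclusion to an openness property of \(\chi\) via the canonical factorization through the quotient group. Write \(\pi:G\to G/\ker\chi\) for the (open) quotient homomorphism and \(\tilde\chi:G/\ker\chi\to\mathbb{T}\) for the unique continuous injective homomorphism satisfying \(\chi=\tilde\chi\circ\pi\). Since \(V\ker\chi=\pi^{-1}(\pi(V))\) and \(\pi(V)\) is an identity neighborhood in \(G/\ker\chi\) (because \(\pi\) is open), the desired inclusion \(\chi^{-1}(J_\delta)\subseteq V\ker\chi\) is equivalent to \(\tilde\chi^{-1}(J_\delta)\subseteq\pi(V)\) in the quotient.

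The heart of the matter is then to establish that the preimages \(\{\tilde\chi^{-1}(J_\delta)\}_{\delta>0}\) form a neighborhood base at the identity of \(G/\ker\chi\), so that one of them fits inside \(\pi(V)\). I would split into cases according to the image \(\chi(G)\subseteq\mathbb{T}\). If \(\chi(G)\) is finite, then \(\ker\chi\) is an open finite-index subgroup of \(G\), and any \(\delta\) smaller than the distance from \(1\) to the remaining elements of \(\chi(G)\) yields \(\chi^{-1}(J_\delta)=\ker\chi\subseteq V\ker\chi\). If \(\chi(G)=\mathbb{T}\), then \(\chi:G\to\mathbb{T}\) is a continuous surjective homomorphism of Polish groups, so the Open Mapping Theorem gives that \(\chi\) is open; consequently \(\chi(V)\) is open in \(\mathbb{T}\) and contains some arc \(J_\delta\), whence \(\chi^{-1}(J_\delta)\subseteq\chi^{-1}(\chi(V))=V\ker\chi\).

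The main obstacle I anticipate is the remaining case in which \(\chi(G)\) is a proper dense subgroup of \(\mathbb{T}\): here the subspace topology on \(\chi(G)\) is not Polish, and the Open Mapping Theorem does not immediately give openness of \(\tilde\chi\) onto its image. I would handle this case by working entirely inside \(G/\ker\chi\), comparing its quotient topology with the pullback topology induced by \(\tilde\chi\) from \(\mathbb{T}\). The goal is to show, using continuity of \(\tilde\chi\) at the identity together with the local structure of \(G\) (in particular that \(\pi(V)\) is open), that \(\tilde\chi^{-1}(J_\delta)\) can be forced into \(\pi(V)\) by taking \(\delta\) sufficiently small; this is the step that will require the most care and is where I would expect the proof's technical content to lie.
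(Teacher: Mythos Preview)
Your setup via the quotient $G/\ker\chi$ and the induced injection $\widetilde\chi$ is exactly the paper's approach: the paper likewise writes $\chi=\widetilde\chi\circ\pi$, notes that $\pi(V)$ is open, and then asserts (invoking ``the first isomorphism theorem for topological groups'') that $\widetilde\chi$ is a topological isomorphism onto $\chi(G)$, so that $\chi(V)$ is a neighborhood of $1$ in $\chi(G)\subset\mathbb{T}$ and hence contains some $J_\delta\cap\chi(G)$. Your case split (finite image, surjective, proper dense image) simply makes explicit where this openness claim is automatic and where it is not; you correctly isolate the proper dense case as the one still requiring an argument.

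That case cannot be completed, however, because the lemma as stated is \emph{false} there. Take $G=\mathbb{Q}_p$ and the standard character $\chi(x)=e^{2\pi i\{x\}_p}$, so that $\ker\chi=\mathbb{Z}_p$ is open; for any identity neighborhood $V\subseteq\mathbb{Z}_p$ one has $V\ker\chi=\mathbb{Z}_p$, yet $\chi(p^{-k})=e^{2\pi i/p^k}\to 1$ while $p^{-k}\notin\mathbb{Z}_p$, so $\chi^{-1}(J_\delta)\not\subseteq\mathbb{Z}_p$ for every $\delta>0$. More generally, for every class~$\mathcal{Q}$ group each nontrivial character has open kernel (small compact open subgroups map into small arcs, hence into $\{1\}$) and infinite dense image (since $\widehat{G}$ is torsion-free), so the problematic case is in fact the \emph{only} one arising in the paper's setting. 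The paper's proof has the same gap at precisely the point you flagged: the first isomorphism theorem yields only a continuous bijection $\widetilde\chi:\ker\chi\backslash G\to\chi(G)$, not a homeomorphism when $\chi(G)$ carries the subspace topology from $\mathbb{T}$, and the step ``$J_\delta\cap\chi(G)\subseteq\chi(V)$ for small $\delta$'' does not follow.
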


\begin{proof}
Let \(\pi:G\to\ker\chi\backslash G\) be the quotient map. By the first isomorphism theorem for topological groups, we have the topological group isomorphism
\[\widetilde{\chi}:\ker\chi\backslash G\longrightarrow\chi\left(G\right),\quad\widetilde{\chi}\left(\pi\left(g\right)\right)=\chi\left(g\right).\]
Let \(V\subseteq G\) be an arbitrary identity neighborhood. Since \(\pi\) is an open map,
\[\pi\left(V\right)\subseteq\ker\chi\backslash G\]
is an identity neighborhood, and since \(\widetilde{\chi}\) is an open map, \(\widetilde{\chi}\left(\pi\left(V\right)\right)\subseteq\chi\left(G\right)\) is an identity neighborhood. Because \(\pi\) is surjective we have
\[\chi\left(V\right)=\widetilde{\chi}\left(\pi\left(V\right)\right),\]
and therefore \(\chi\left(V\right)\subseteq\chi\left(G\right)\) is an identity neighborhood. It follows that if \(\delta>0\) is sufficiently small then \(J_{\delta}\cap\chi\left(G\right)\subseteq\chi\left(V\right)\). Thus, for every \(g\in \chi^{-1}\left(J_{\delta}\right)\) there is \(v\in V\) such that \(\chi\left(g\right)=\chi\left(v\right)\), hence \(g=v\left(gv^{-1}\right)\in V\ker\chi\), concluding that \(\chi^{-1}\left(J_{\delta}\right)\subseteq V\ker\chi\).
\end{proof}

\begin{lem}\label{lem:genlem2}
Suppose \(\widehat{G}\) is torsion-free. For every pair of characters \(\chi_{1},\chi_{2}\in\widehat{G}\) such that \(\ker\chi_{1}\leq\ker\chi_{2}\), there is an integer \(q\) such that \(\chi_{2}=\chi_{1}^{q}\), and in fact
\[q=\pm\left[\ker\chi_{2}:\ker\chi_{1}\right]<\infty.\]
\end{lem}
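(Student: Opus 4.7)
My plan is to use Pontryagin duality to translate the kernel inclusion into the dual, then combine this with torsion-freeness of $\widehat{G}$ and the classification of continuous endomorphisms of $\mathbb{T}$ to extract the integer exponent. The trivial case $\chi_{1}=1$ forces $\chi_{2}=1$ and any integer $q$ works, so I assume $\chi_{1}\neq 1$; by torsion-freeness of $\widehat{G}$, $\chi_{1}$ has infinite order in $\widehat{G}$, and since closed subgroups of $\mathbb{T}$ are finite cyclic or all of $\mathbb{T}$, the image $\chi_{1}(G)$ must be dense in $\mathbb{T}$ (otherwise $\overline{\chi_{1}(G)}$ is a finite cyclic group, forcing $\chi_{1}$ to be torsion). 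Descending through $G':=G/\ker\chi_{1}$, the induced $\tilde\chi_{1}:G'\to\mathbb{T}$ is injective with dense image, and since $\ker\chi_{1}\leq\ker\chi_{2}$, $\chi_{2}$ also descends to $\tilde\chi_{2}\in\widehat{G'}$.

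The core Pontryagin-dual input is the double-annihilator identity $(\ker\chi_{1})^{\perp}=\overline{\langle\chi_{1}\rangle}$ inside $\widehat{G}$; since $\chi_{2}$ vanishes on $\ker\chi_{1}$, it lies in this annihilator, so $\tilde\chi_{2}\in\overline{\langle\tilde\chi_{1}\rangle}\subseteq\widehat{G'}$. The main step and hardest part will be upgrading this to the strict containment $\tilde\chi_{2}\in\langle\tilde\chi_{1}\rangle$. For this I would argue that $\chi_{1}(G)$ is in fact all of $\mathbb{T}$, equivalently, that $G'$ is compact; this is where the specific structure from the intended application of the lemma enters. Granting this, the open mapping theorem for lcsc groups upgrades the injective continuous homomorphism $\tilde\chi_{1}:G'\to\mathbb{T}$ into a topological group isomorphism $G'\cong\mathbb{T}$. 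Then $\widehat{G'}\cong\widehat{\mathbb{T}}=\mathbb{Z}$ is cyclic and generated by $\tilde\chi_{1}$, so automatically $\tilde\chi_{2}=\tilde\chi_{1}^{q}$ for some integer $q$, i.e.\ $\chi_{2}=\chi_{1}^{q}$.

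Finally, to identify $|q|$ with the index I would use that the continuous endomorphisms of $\mathbb{T}$ are exactly $z\mapsto z^{q}$ for $q\in\mathbb{Z}$, with kernel $\mu_{|q|}$ of order $|q|$ when $q\neq 0$. Transporting through $G'\cong\mathbb{T}$, the subgroup $\ker\chi_{2}/\ker\chi_{1}\subseteq G'$ corresponds to $\mu_{|q|}$, yielding $[\ker\chi_{2}:\ker\chi_{1}]=|q|<\infty$; the sign ambiguity is absorbed into the $\pm$, and finiteness of the index falls out for free from this identification.
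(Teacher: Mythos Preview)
Your argument has a genuine gap at the step where you assert that $\chi_1(G)=\mathbb{T}$, equivalently that $G':=G/\ker\chi_1$ is compact. This fails already in the paper's motivating examples: for $G=\mathbb{Q}_p$ with the standard character $\chi_1(x)=e^{2\pi i\{x\}_p}$ one has $\ker\chi_1=\mathbb{Z}_p$, so $G'\cong\mathbb{Q}_p/\mathbb{Z}_p\cong\mathbb{Z}(p^{\infty})$ is discrete and infinite, hence noncompact, and $\chi_1(G)=\mathbb{Z}(p^{\infty})$ is a proper dense subgroup of $\mathbb{T}$. Your appeal to ``specific structure from the intended application'' does not help: the characters $\chi_n$ arising there have \emph{open} kernels (they contain the compact open subgroups $V_n$), so the quotient is always discrete and never $\mathbb{T}$. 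Your double-annihilator step correctly gives $\chi_2\in\overline{\langle\chi_1\rangle}$, but in $\widehat{\mathbb{Q}_p}\cong\mathbb{Q}_p$ the cyclic group $\langle\chi_1\rangle$ corresponds to $\mathbb{Z}$, whose closure is $\mathbb{Z}_p$; passing from the closure back to the cyclic group is precisely the content of the lemma and is not supplied by your argument.

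The paper takes a different route that does not assume $G'$ compact. It defines $\Phi:\chi_1(G)\to\mathbb{T}$ on the dense image by $\Phi(\chi_1(g))=\chi_2(g)$ and argues that $\Phi$ is \emph{uniformly continuous} in the subspace topology from $\mathbb{T}$, via the preceding auxiliary lemma asserting $\chi_1^{-1}(J_\delta)\subseteq V\ker\chi_1$ for small $\delta$. Uniform continuity then yields an extension to a continuous endomorphism of $\mathbb{T}$, necessarily $z\mapsto z^q$, whence $\chi_2=\chi_1^q$. For the index, the paper shows $r:=[\ker\chi_2:\ker\chi_1]$ divides $q$ via the injection $\ker\chi_2/\ker\chi_1\hookrightarrow\mu_{|q|}$, and then bootstraps $q\mid r$ by reapplying the existence part to the pair $(\chi_2,\chi_1^{r})$, using torsion-freeness of $\widehat{G}$ to cancel. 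The uniform-continuity step is exactly what replaces your compactness assumption; note that it is itself delicate precisely when $\ker\chi_1$ is open (so $G'$ is discrete while $\chi_1(G)\subset\mathbb{T}$ is nondiscrete), and the invocation of the first isomorphism theorem in that auxiliary lemma deserves scrutiny in such cases.
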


\begin{proof}
Consider the group homomorphism
\[\Phi:\chi_{1}\left(G\right)\longrightarrow\chi_{2}\left(G\right),\quad\Phi\left(\chi_{1}\left(g\right)\right)=\chi_{2}\left(g\right),\]
which is well-defined since \(\ker\chi_{1}\leq\ker\chi_{2}\). We claim that \(\Phi\) is uniformly continuous in the corresponding subgroup topologies. Thus, we must show that for every \(\epsilon>0\) there exists \(\delta>0\) such that
\[\operatorname{dist}\left(\chi_{1}\left(g\right),\chi_{1}\left(h\right)\right)<\delta\implies\operatorname{dist}\left(\chi_{2}\left(g\right),\chi_{2}\left(h\right)\right)<\epsilon\text{ for all }g,h\in G,\]
or equivalently,
\[\operatorname{dist}\left(1,\chi_{1}\left(g\right)\right)<\delta\implies\operatorname{dist}\left(1,\chi_{2}\left(g\right)\right)<\epsilon\text{ for all }g\in G.\]
Let \(\epsilon>0\) be arbitrary. By the continuity of \(\chi_{2}\) there is an identity neighborhood \(V\subset G\) such that \(\chi_{2}\left(V\right)\subseteq J_{\epsilon}\). By Lemma~\ref{lem:genlem1} for \(\chi_{1}\) and \(V\), there exists \(\delta>0\) such that \(\chi_{1}^{-1}\left(J_{\delta}\right)\subseteq V\ker\chi_{1}\) and therefore, since \(\ker\chi_{1}\leq\ker\chi_{2}\), in fact \(\chi_{1}^{-1}\left(J_{\delta}\right)\subseteq V\ker\chi_{2}\). Thus, for every \(g\in G\), if \(\operatorname{dist}\left(1,\chi_{1}\left(g\right)\right)<\delta\) then \(g\in V\ker\chi_{2}\), so we can write \(g=vh\) with \(v\in V\) and \(h\in\ker\chi_{2}\), and therefore
\[\operatorname{dist}\left(1,\chi_{2}\left(g\right)\right)=\operatorname{dist}\left(1,\chi_{2}\left(vh\right)\right)=\operatorname{dist}\left(1,\chi_{2}\left(v\right)\right)<\epsilon.\]
This establishes the uniform continuity of \(\Phi\). Since \(\widehat{G}\) is torsion-free, both subgroups \(\chi_{1}\left(G\right)\) and \(\chi_{2}\left(G\right)\) of \(\mathbb{T}\) are infinite and hence dense. Therefore, \(\Phi\) extends uniquely to a continuous epimorphism of \(\Phi:\mathbb{T}\to\mathbb{T}\), so there exists an integer \(q\) such that \(\Phi\left(z\right)=z^{q}\), and thus \(\chi_{2}=\chi_{1}^{q}\). As a consequence,
\[\ker\chi_{1}\backslash\ker\chi_{2}\to\mathbb{T},\quad g+\ker\chi_{1}\mapsto\chi_{1}\left(g\right),\]
is an injective homomorphism into the group of \(q^{\mathrm{th}}\) roots of unity, implying that
\[r\coloneqq\left[\ker\chi_{2}:\ker\chi_{1}\right]<\infty\text{ satisfies }r\mid q.\]
Let us show that also \(q\mid r\). For every \(g\in\ker\chi_{2}\) we have \(g^{r}\in\ker\chi_{1}\), and thus \(\ker\chi_{2}\leq\ker\chi_{1}^{r}\). By the first part of the proof it follows that there exists a nonzero integer \(s\) such that \(\chi_{1}^{r}=\chi_{2}^{s}\), hence \(\chi_{1}^{r}=\chi_{1}^{qs}\), and since \(\widehat{G}\) is torsion-free it follows that \(r=qs\), thus \(q\mid r\). All together, we obtained \(q=\pm r\).
\end{proof}

\begin{lem}\label{lem:genlem3}
Let \(J\subset\mathbb{T}\) be a closed interval and \(F<\mathbb{T}\) a finite subgroup such that \(JF\neq\mathbb{T}\). Then for every \(t\in\mathbb{T}\), if \(JF=JFt\) modulo \(m_{\mathbb{T}}\) then \(t\in F\).
\end{lem}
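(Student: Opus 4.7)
The approach is to upgrade the hypothesis ``\(JF = JFt\) modulo \(m_{\mathbb{T}}\)'' to an exact set equality and then match connected components. Since \(F < \mathbb{T}\) is a finite subgroup of some order \(n\), it coincides with the group of \(n\)-th roots of unity, and so the translates of any subset of \(\mathbb{T}\) by \(F\) are equidistributed with spacing \(1/n\). Setting \(\ell \coloneqq m_{\mathbb{T}}(J)\), we may assume \(\ell > 0\) (otherwise the hypothesis is vacuous). The assumption \(JF \neq \mathbb{T}\) then forces \(\ell < 1/n\): for if \(\ell \geq 1/n\), the \(n\) consecutive arcs \(J, J + 1/n, \ldots, J + (n-1)/n\) would touch or overlap and thus cover all of \(\mathbb{T}\). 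In particular, the translates \((J+f)_{f \in F}\) are pairwise disjoint.

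Consequently \(JF\) is a disjoint union of \(n\) nondegenerate closed arcs of length \(\ell\), and likewise for \(JFt\). Both sets are regular closed (equal to the closure of their interior), hence the interior is determined by the set modulo \(m_{\mathbb{T}}\), and two such sets that agree modulo \(m_{\mathbb{T}}\) must agree exactly. Therefore \(JF = JFt\) as sets, and their unique decompositions into connected components must coincide as collections of arcs. In particular, for any fixed \(f \in F\) there exists \(f' \in F\) with \(J + f + t = J + f'\), whence
\[J = J + (f' - f - t).\]

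The remaining step is to conclude \(f' - f - t = 0\), i.e.\ \(t = f' - f \in F\). For a nondegenerate closed arc \(J\) of length \(\ell < 1\), the equation \(J + s = J\) in \(\mathbb{T}\) forces \(s = 0\); this is immediate by comparing endpoint sets, with the borderline case \(\ell = 1/2\) (which can only arise when \(n = 1\)) handled by noting that the two closed semicircles share endpoints \(\{0, 1/2\}\) yet remain distinct as subsets of \(\mathbb{T}\) (they have disjoint interiors). The only nontrivial step is the upgrade from measure-theoretic to exact equality of \(JF\) and \(JFt\); once the regular-closedness observation is in place, the rest is a direct component-matching argument.
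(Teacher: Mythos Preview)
Your argument is correct, with one small wording slip: when \(\ell=0\) the hypothesis is not vacuous but trivially satisfied (both sides have measure zero), and the conclusion then fails. So ``closed interval'' must be read as nondegenerate, which is the intended meaning here; your proof is fine under that reading.

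Your approach differs from the paper's. The paper reduces to the case of trivial \(F\) by passing to the quotient: the covering map \(\pi:\mathbb{T}\to F\backslash\mathbb{T}\) sends the \(F\)-invariant set \(JF\) to the single closed arc \(\pi(J)\subsetneq F\backslash\mathbb{T}\), and the hypothesis becomes \(\pi(J)=\pi(J)\pi(t)\) modulo Haar on the quotient torus; the base case (a proper closed arc is fixed modulo measure only by the trivial rotation) then gives \(\pi(t)=1\), i.e.\ \(t\in F\). You instead stay in \(\mathbb{T}\), exploit the explicit decomposition of \(JF\) into \(n\) disjoint arcs, upgrade the measure equality to a genuine set equality via regular-closedness, and match connected components. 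The quotient route is shorter and avoids the component bookkeeping, while your route is more hands-on and makes the arc-rigidity step (shared by both proofs) fully explicit, including the \(\ell=1/2\) edge case.
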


\begin{proof}
The case of \(F\) being the trivial subgroup is clear. For a general finite subgroup \(F\), note that \(F\backslash\mathbb{T}\) is again a torus, and let \(\pi:\mathbb{T}\to F\backslash\mathbb{T}\) be the quotient map. If for \(t\in\mathbb{T}\) we have \(JF=JFt\), then \(\pi\left(J\right)\pi\left(F\right)=\pi\left(J\right)\pi\left(F\right)\pi\left(t\right)\). But \(\pi\left(J\right)\) is a compact connected set, hence a closed interval in the torus \(F\backslash\mathbb{T}\). From the case of trivial subgroup we have \(\pi\left(t\right)=1\in F\backslash\mathbb{T}\), thus \(t\in F\).
\end{proof}

\subsection{Proof of Theorem~\ref{mthm:mineqext}}

Let \(G\) be a class \(\mathcal{Q}\) group, \(\left(X,\mu,Y\right)\) an ergodic transverse \(G\)-space where \(Y\) is doubly separated and assume \(I_{\mu}\left(Y\right)=2\cdot\iota_{\mu}\left(Y\right)\). Fix a sequence of compact open subgroups of \(G\),
\[V_{1}\gneq V_{2}\gneq V_{3}\dotsm\text{ such that }\bigcap\nolimits_{n\in\mathbb{N}}V_{n}=\left\{e_{G}\right\}.\]

\subsubsection*{\textbf{Step~1}}

We exploit the equality \(I_{\mu}\left(Y\right)=2\cdot\iota_{\mu}\left(Y\right)\) and the extremes of Kneser's inequality~\ref{thm:kneser}, in order to construct \(G\)-eigenfunctions \(f_{1},f_{2},\dotsc:X\to\mathbb{T}\) with corresponding eigencharacters \(\chi_{1},\chi_{2},\dotsc\in\widehat{G}\), as well as closed centered intervals \(\mathbb{T}\supset I_{1}\supseteq I_{2}\supseteq\dotsm\), such that for every \(n\in\mathbb{N}\),
\[f_{n}^{-1}\left(I_{n}\right)=V_{n}.Y\text{ modulo }\mu.\]

\smallskip

Let us first modify the sequence \(V_{1},V_{2},\dotsc\) as follows. We first assume that \(V_{1}\) is sufficiently small to satisfy Theorem~\ref{thm:mineq}, and thus so is every \(V_{n}\). We assume further that \(V_{1}\) is sufficiently small so that \(V_{1}V_{1}V_{1}\) is a separating neighborhood for \(Y\), and thus so is every \(V_{n}\). Since each \(V_{n+1}\backslash V_{n}\) is finite, we can refine the sequence into a maximal one, in that for every \(n\in\mathbb{N}\), there is no compact open subgroup lies strictly between \(V_{n}\) and \(V_{n+1}\). For every \(n\in\mathbb{N}\) fix a finite set
\[Q_{n}\subset V_{n}\text{ such that }Q_{n}V_{n+1}=V_{n}.\]
Note that by the maximality, \(\left|Q_{n}\right|=\left[V_{n}:V_{n+1}\right]\) is a prime (by Cauchy's theorem).

Let \(\mathcal{K}_{G}\) be the Kronecker factor of \(\left(X,\mu\right)\), with the corresponding Kronecker compactification \(\left(K,\tau\right)\) and the \(G\)-factor map \(\kappa:\left(X,\mu\right)\to\left(K,m_{K}\right)\) as in Theorem~\ref{thm:krongrp}. For every \(n\in\mathbb{N}\) put the Kronecker shadow \(S_{n}\coloneqq \left(V_{n}.Y\right)_{\mathcal{K}_{G}}\subseteq K\). Since \(I_{\mu}\left(Y\right)=2\cdot\iota_{\mu}\left(Y\right)\), we obtain an equality for \(V_{n}\) in the inequality \eqref{eq:refineq} of Theorem~\ref{thm:mineq}, and thus \(m_{K}\left(S_{n}-S_{n}\right)=2\cdot m_{K}\left(S_{n}\right)\). Then by the extremes of Kneser's inequality~\ref{thm:kneser} there exists a character \(\xi_{n}\in\widehat{K}\) and a closed interval \(I_{n}\subset\mathbb{T}\) such that
\[S_{n}=\xi_{n}^{-1}\left(I_{n}\right)\text{ modulo }m_{K}.\]
Since \(m_{K}\left(S_{n}\right)\xrightarrow[n\to\infty]{}0\) we may assume that every \(\xi_{n}\) is nontrivial. Define then a \(G\)-eigenfunction \(f_{n}\) with eigencharacter \(\chi_{n}\) by
\[f_{n}:X\to\mathbb{T},\quad f_{n}\coloneqq \xi_{n}\circ\kappa,\quad\text{and}\quad\chi_{n}\coloneqq \xi_{n}\circ\tau\in\widehat{G},\]
and note that
\[f_{n}^{-1}\left(I_{n}\right)=\kappa^{-1}\left(\xi_{n}^{-1}\left(I_{n}\right)\right)=\kappa^{-1}\left(S_{n}\right)=V_{n}.Y\text{ modulo }\mu,\]
where the last equality is because there is an equality in \eqref{eq:refineq}, by which \(m_{K}\left(S_{n}\right)=\mu\left(V_{n}.Y\right)\).

Finally, in order to simplify later arguments, we will modify this construction so the intervals \(I_{n}\) are centered (i.e. symmetric about \(1\in\mathbb{T}\)) and nested as follows. Since \(V_{n}\gneq V_{n+1}\) and
\[m_{\mathbb{T}}\left(I_{n+1}\right)=\mu\left(f_{n}^{-1}\left(I_{n}\right)\right)=m_{G}\left(V_{n}.Y\right),\]
we have \(m_{\mathbb{T}}\left(I_{n}\right)\geq m_{\mathbb{T}}\left(I_{n+1}\right)\), so there is \(t_{n+1}\in\mathbb{T}\) such that \(I_{n}\supseteq I_{n+1}t_{n+1}\) and moreover \(I_{n+1}t_{n+1}\) lies at the center of \(I_{n}\). Replace \(I_{n+1}\) and \(f_{n+1}\) with \(I_{n+1}^{\prime}\coloneqq I_{n+1}t_{n+1}\) and \(f_{n+1}^{\prime}\coloneqq f_{n+1}t_{n+1}\). Evidently, \(f_{n+1}^{\prime}\) is a \(G\)-eigenfunction with eigencharacter \(\chi_{n+1}\), and \(f_{n+1}^{\prime -1}\left(I_{n+1}^{\prime}\right)=V_{n+1}.Y\). Doing this inductively, we obtain \(I_{1}^{\prime}\supseteq I_{2}^{\prime}\supseteq\dotsm\) and that \(I_{n+1}^{\prime}\) lies at the center of \(I_{n}^{\prime}\) for every \(n\in\mathbb{N}\).

\subsubsection*{\textbf{Step~2}}

We now show that for every \(n\in\mathbb{N}\), there is an integer \(q_{n}\notin\{0,-1,+1\}\) such that
\[\chi_{n}=\chi_{n+1}^{q_{n}}\text{ and }f_{n}=f_{n+1}^{q_{n}}.\]

\smallskip

Let \(n\in\mathbb{N}\). Aiming to apply Lemma~\ref{lem:genlem2}, let us show that
\begin{equation}\label{eq:ker}
\ker\chi_{n+1}\leq\ker\chi_{n}.
\end{equation}
To establish this property, recall that \(Q_{n}V_{n+1}=V_{n}\), and therefore
\[f_{n}^{-1}\left(I_{n}\right)=V_{n}.Y=Q_{n}V_{n+1}.Y=Q_{n}.f_{n+1}^{-1}\left(I_{n+1}\right)\text{ modulo }\mu,\]
so that for every \(g\in G\) we have
\[f_{n}^{-1}\left(I_{n}\chi_{n}\left(g\right)\right)=g.f_{n}^{-1}\left(I_{n}\right)=gQ_{n}.f_{n+1}^{-1}\left(I_{n+1}\right)=Q_{n}g.f_{n+1}^{-1}\left(I_{n+1}\right)\text{ modulo }\mu.\]
Then whenever \(g\in\ker\chi_{n+1}\) we obtain
\[f_{n}^{-1}\left(I_{n}\chi_{n}\left(g\right)\right)=Q_{n}.f_{n+1}^{-1}\left(I_{n+1}\right)=f_{n}^{-1}\left(I_{n}\right)\text{ modulo }\mu,\]
and consequently, since \(f_{n}\) is a \(G\)-factor map and thus measure preserving,
\[I_{n}\chi_{n}\left(g\right)=I_{n}\text{ modulo }m_{\mathbb{T}}.\]
Since \(I_{n}\subsetneq\mathbb{T}\) is an interval, necessarily \(\chi_{n}\left(g\right)=1\) and \(g\in\ker\chi_{n}\), establishing \eqref{eq:ker}. Now by Lemma~\ref{lem:genlem2} applied to \(\chi_{n}\) and \(\chi_{n+1}\), we have for \(q_{n}\coloneqq\left[\ker\chi_{n}:\ker\chi_{n+1}\right]\) that \(\chi_{n}=\chi_{n+1}^{q_{n}}\). Since \(\chi_{n}=\xi_{n}\circ\tau\) and \(\tau\left(G\right)\leq K\) is dense, from \(\chi_{n}=\chi_{n+1}^{q_{n}}\) it follows that \(\xi_{n}=\xi_{n+1}^{q_{n}}\), which implies that \(f_{n}=f_{n+1}^{q_{n}}\).

We must show that \(q_{n}\neq\pm 1\). To this end, let us first make the useful observation that
\begin{equation}\label{eq:kersV}
V_{1}\cap\ker\chi_{n}\leq V_{n}\leq\ker\chi_{n}\text{ for every }n\in\mathbb{N}.
\end{equation}
Indeed, for the first containment, if \(g\in V_{1}\cap\ker\chi_{n}\) then we have
\[gV_{n}.Y=g.f_{n}^{-1}\left(I_{n}\right)=f_{n}^{-1}\left(I_{n}\chi_{n}\left(g\right)\right)=f_{n}^{-1}\left(I_{n}\right)=V_{n}.Y\text{ modulo }\mu,\]
and therefore by Fubini's theorem there are \(v,v^{\prime}\in V_{n}\) and \(y,y^{\prime}\in Y\) such that \(gv.y=v^{\prime}.y^{\prime}\), so that \(gvv^{\prime -1}\in Y_{y}\cap V_{1}V_{n}V_{n}\subseteq\Lambda_{Y}\cap V_{1}V_{1}V_{1}=\{e_{G}\}\), hence \(g=v^{\prime}v\in V_{n}\) as \(V_{n}\) is a subgroup, establishing the first containment in \eqref{eq:kersV}. For the second containment, if \(g\in V_{n}\) then we have
\[f_{n}^{-1}\left(I_{n}\chi_{n}\left(g\right)\right)=g.f_{n}^{-1}\left(I_{n}\right)=gV_{n}.Y=V_{n}.Y=f_{n}^{-1}\left(I_{n}\right)\text{ modulo }\mu,\]
hence \(I_{n}=I_{n}\chi_{n}\left(g\right)\) modulo \(m_{\mathbb{T}}\) and \(\chi_{n}\left(g\right)=0\), establishing the second containment in \eqref{eq:kersV}.

We can now show that \(q_{n}\neq\pm 1\). Note that from \(V_{n}\leq\ker\chi_{n}\) it follows that \(V_{n}.f_{n}^{-1}\left(\cdot\right)=f_{n}^{-1}\left(\cdot\right)\). Now if \(q_{n}=1\), then \(\chi_{n}=\chi_{n+1}\) and hence \(f_{n}=f_{n+1}\), so we have
\[V_{n+1}.Y=f_{n+1}^{-1}\left(I_{n+1}\right)=f_{n}^{-1}\left(I_{n+1}\right)=V_{n}.f_{n}^{-1}\left(I_{n+1}\right)=V_{n}.f_{n+1}^{-1}\left(I_{n+1}\right)=V_{n}V_{n+1}.Y=V_{n}.Y\text{ modulo }\mu,\]
where in the third equality we used that \(V_{n}\leq\ker f_{n}\) by \eqref{eq:kersV}. However, since \(V_n\gneq V_{n+1}\) are open,
\[\mu\left(V_{n}.Y\right)=m_{G}\left(V_{n}\right)\cdot\mu_{Y}\left(Y\right)\gneq m_{G}\left(V_{n+1}\right)\cdot\mu_{Y}\left(Y\right)=\mu\left(V_{n+1}.Y\right),\]
so it is impossible that \(q_{n}=1\). A similar argument shows that \(q_{n}\neq -1\).

\subsubsection*{\textbf{Step~3}}

We now show that \(q_{n}=\left|Q_{n}\right|\), and in particular \(q_{n}\) is a prime, for every \(n\in\mathbb{N}\).

\smallskip

Let \(n\in\mathbb{N}\), and consider the finite subgroup
\[F_{n}\coloneqq\chi_{n+1}\left(V_{n}\right)=\chi_{n+1}\left(Q_{n}V_{n+1}\right)=\chi_{n+1}\left(Q_{n}\right)<\mathbb{T},\]
where the third equality is since \(V_{n+1}\leq\ker\chi_{n+1}\) by \eqref{eq:kersV}. Since \(m_{G}\left(V_{n}\right)=\left|Q_{n}\right|\cdot m_{G}\left(V_{n+1}\right)\) we have
\[m_{\mathbb{T}}\left(I_{n}\right)=\mu\left(V_{n}.Y\right)=m_{G}\left(V_{n}\right)\cdot\mu_{Y}\left(Y\right)=\left|Q_{n}\right|\cdot m_{G}\left(V_{n+1}\right)\cdot\mu_{Y}\left(Y\right)=\left|Q_{n}\right|\cdot\mu\left(V_{n+1}.Y\right)=\left|Q_{n}\right|\cdot m_{\mathbb{T}}\left(I_{n+1}\right).\]
At the same time, \(V_{n}.Y=Q_{n}V_{n+1}.Y=Q_{n}.f_{n+1}^{-1}\left(I_{n+1}\right)=f_{n+1}^{-1}\left(I_{n+1}F_{n}\right)\) modulo \(\mu\), hence
\[m_{\mathbb{T}}\left(I_{n}\right)=\mu\left(f_{n}^{-1}\left(I_{n}\right)\right)=\mu\left(V_{n}.Y\right)=\mu\left(f_{n+1}^{-1}\left(I_{n+1}F_{n}\right)\right)=m_{\mathbb{T}}\left(I_{n+1}F_{n}\right).\]
Both identities together imply
\[\left|Q_{n}\right|\cdot m_{\mathbb{T}}\left(I_{n+1}\right)=m_{\mathbb{T}}\left(I_{n+1}F_{n}\right)\leq\left|F_{n}\right|\cdot m_{\mathbb{T}}\left(I_{n+1}\right),\text{ hence }\left|Q_{n}\right|\leq\left|F_{n}\right|,\]
and at the same time, we simply have
\[\left|F_{n}\right|=\left|\chi_{n+1}\left(Q_{n}\right)\right|\leq\left|Q_{n}\right|.\]
All together, \(\left|Q_{n}\right|=\left|F_{n}\right|\) so \(F_{n}\) is a finite group of prime order. Using that \(q_{n}=\left[\ker\chi_{n}:\ker\chi_{n+1}\right]\) by construction (as in Lemma~\ref{lem:genlem2}), pick \(P_{n}\subset\ker\chi_{n}\) with \(q_{n}=\left|P_{n}\right|\) and \(\ker\chi_{n}\coloneqq P_{n}\ker\chi_{n+1}\). Recall
\[f_{n}^{-1}\left(I_{n}\right)=V_{n}.Y=Q_{n}V_{n+1}.Y=f_{n+1}^{-1}\left(I_{n+1}F_{n}\right),\]
and note that for every \(g\in\ker\chi_{n}\),
\[f_{n}^{-1}\left(I_{n}\right)=g.f_{n}^{-1}\left(I_{n}\right)=g.f_{n+1}^{-1}\left(I_{n+1}F_{n}\right)=f_{n+1}^{-1}\left(I_{n+1}F_{n}\chi_{n+1}\left(g\right)\right).\]
It follows from the last two identities that
\[I_{n+1}F_{n}=I_{n+1}F_{n}\chi_{n+1}\left(g\right)\text{ modulo }m_{\mathbb{T}}\text{ for all }g\in\ker\chi_{n},\]
and therefore, from Lemma~\ref{lem:genlem3} we deduce that
\[\chi_{n+1}\left(g\right)\in F_{n}\text{ for all }g\in\ker\chi_{n}.\]
In other words, \(\chi_{n+1}\left(P_{n}\right)=\chi_{n+1}\left(P_{n}\ker\chi_{n+1}\right)=\chi_{n+1}\left(\ker\chi_{n}\right)\leq F_{n}\) is a (nontrivial) subgroup of the prime order group \(F_{n}\), and therefore \(\chi_{n+1}\left(P_{n}\right)=F_{n}\), hence \(\left|\chi_{n+1}\left(P_{n}\right)\right|=\left|\chi_{n+1}\left(Q_{n}\right)\right|=\left|Q_{n}\right|\). In particular, since \(P_{n}\cap\ker\chi_{n+1}=\{e_{G}\}\), we conclude that \(q_{n}=\left|P_{n}\right|=\left|Q_{n}\right|\).

\subsubsection*{\textbf{Step~4}}

Here we deduce from previous steps that \(I_{n}=I_{n+1}^{q_{n}}\) for every \(n\in\mathbb{N}\); that is, the image of \(I_{n+1}\) under the map \(\mathbb{T}\to\mathbb{T}\), \(z\mapsto z^{q_{n}}\), is \(I_{n}\).

\smallskip

Start by recalling that
\[f_{n}^{-1}\left(I_{n}\right)=V_{n}.Y=Q_{n}V_{n+1}.Y=Q_{n}.f_{n+1}^{-1}\left(I_{n+1}\right)\text{ where }\left|Q_{n}\right|=q_{n}.\]
For all \(u,u^{\prime}\in Q_{n}\), if it holds that \(uV_{n+1}.Y\cap u^{\prime}V_{n+1}.Y\neq\emptyset\) then there are \(v,v^{\prime}\in V_{n+1}\) such that \(uu^{\prime -1}vv^{\prime -1}\in\Lambda_{Y}\cap V_{n}=\{e_{G}\}\),\footnote{\(V_{n}\) is a separating neighborhood for \(Y^{\left[2\right]}\), and a fortiori a separating neighborhood for \(Y\).} and therefore \(uu^{\prime -1}=v^{\prime}v^{-1}\in V_{n+1}\), which implies that \(uV_{n+1}.Y=u^{\prime}V_{n+1}.Y\). However, from \(m_{G}\left(V_{n}\right)=q_{n}\cdot m_{G}\left(V_{n+1}\right)\) it follows that \(\mu\left(V_{n}.Y\right)=q_{n}\cdot\mu\left(V_{n+1}.Y\right)\), so that \(q_{n}\) is minimal, and thus \(uV_{n+1}.Y=u^{\prime}V_{n+1}.Y\) forces \(u=u^{\prime}\). This discussion shows that \(\left\{uV_{n+1}.Y:u\in Q_{n}\right\}\) are pairwise disjoint and covering \(V_{n}.Y\), and therefore
\[m_{\mathbb{T}}\left(I_{n}\right)=\mu\left(f_{n}^{-1}\left(I_{n}\right)\right)=\mu\left(V_{n}.Y\right)=q_{n}\cdot\mu\left(V_{n+1}.Y\right)=q_{n}\cdot\mu\left(f_{n+1}^{-1}\left(I_{n+1}\right)\right)=q_{n}\cdot m_{\mathbb{T}}\left(I_{n+1}\right).\]
Since by construction both \(I_{n+1}\) and \(I_{n}\) are centered, necessarily \(I_{n}=I_{n+1}^{q_{n}}\).

\subsubsection*{\textbf{Step~5}}

We now construct a solenoid \(\mathbb{S}\) that is Borel isomorphic to a quotient of \(G\times\mathbb{R}\) by a lattice as Borel \(G\)-spaces.

\smallskip

Define the solenoid associated with the sequence of primes \(q_{1},q_{2},\dotsc\),
\[\mathbb{S}\coloneqq \varprojlim\big(\mathbb{T}\xleftarrow{z^{q_{1}}\mapsfrom z}\mathbb{T}\xleftarrow{z^{q_{2}}\mapsfrom z}\mathbb{T}\dashleftarrow\big)=\left\{ \left(z_{n}\right)_{n\in\mathbb{N}}\in\mathbb{T}^{\mathbb{N}}:\forall n\in\mathbb{N},\,\,z_{n}=z_{n+1}^{q_{n}}\right\}.\]
Thus, \(\mathbb{S}\) is a compact abelian group with Haar probability measure \(m_{\mathbb{S}}\). Letting \(q_{0}\coloneqq 1\), define the map
\[\rho:\mathbb{R}\longrightarrow\mathbb{S},\quad\rho\left(r\right)\coloneqq\big(e^{2\pi ir/\left(q_{0}\dotsm q_{n-1}\right)}\big)_{n\in\mathbb{N}}.\]
It is a basic fact that \(\rho\) is a continuous group-embedding with dense image, thus \(\left(\mathbb{S},\rho\right)\) is a compactification of \(\mathbb{R}\) (see~\cite[(25.3)]{HewittRoss}). Recalling the identities \(\chi_{n}=\chi_{n+1}^{q_{n}}\) for \(n\in\mathbb{N}\), we can define the continuous map
\[\chi:G\longrightarrow\mathbb{S},\quad\chi\left(g\right)\coloneqq\left(\chi_{n}\left(g\right)\right)_{n\in\mathbb{N}}.\]
Define the Borel map
\[\Phi:G\times\mathbb{R}\longrightarrow\mathbb{S},\quad\Phi\left(g,r\right)\coloneqq\chi\left(g\right)\oplus\rho\left(r\right),\]
where \(\oplus\) denotes addition in the group \(\mathbb{S}\). We claim that \(\Phi\) is surjective and that its kernel is discrete:
\begin{itemize}
    \item \(\Phi\) is surjective, namely \begin{equation}\label{eq:So}
    \mathbb{S}=\Phi\left(G\times\mathbb{R}\right)=\chi\left(G\right)\oplus\rho\left(\mathbb{R}\right).
    \end{equation}
    Fix an arbitrary \(z=\left(z_{n}\right)_{n\in\mathbb{N}}\in\mathbb{S}\), so \(z_{n}=z_{n+1}^{q_{n}}\) for all \(n\in\mathbb{N}\). Choose \(r_{o}\in\mathbb{R}\) with \(e^{2\pi i r_{o}}=z_{1}\). We now construct inductively a convergent sequence \(\left(g_{n}\right)_{n\in\mathbb{N}}\) in \(G\). Assume that for some \(n\in\mathbb{N}\) we have constructed \(g_{n}\in G\) satisfying
    \begin{equation}\label{eq:induct}
    \chi_{k}\left(g_{n}\right)\,e^{2\pi i r_{o}/\left(q_0\cdots q_{k-1}\right)}=z_{k},\qquad k=1,\dots,n.
    \end{equation}
    Let
    \[a_{n+1}\coloneqq \chi_{n+1}(g_{n})\,e^{2\pi i r_{o}/\left(q_{0}\cdots q_{n}\right)}\in\mathbb{T}.\]
    By the construction and since \(\chi_{n}=\chi_{n+1}^{q_{n}}\), we have that \(a_{n+1}^{q_{n}}=z_{n}=z_{n+1}^{q_{n}}\), hence \(\omega_{n}\coloneqq z_{n+1}a_{n+1}^{-1}\) is a \(q_{n}^{\mathrm{th}}\) root of unity. By Step~3 above, the finite group \(F_{n}\coloneqq \chi_{n+1}\left(V_{n}\right)\) has order \(\left|Q_{n}\right|=q_{n}\), hence it is the group of \(q^{\mathrm{th}}\) roots of unity so that \(\omega_{n}\in F_n\). Pick \(v_{n}\in V_{n}\) with \(\chi_{n+1}\left(v_{n}\right)=\omega_{n}\) and set \(g_{n+1}\coloneqq v_{n}g_{n}\). To see that \eqref{eq:induct} holds for \(g_{n+1}\), recall that according to \eqref{eq:ker} and \eqref{eq:kersV} we have \(V_{n}\leq\ker\chi_{n}\leq\ker\chi_{n-1}\leq\cdots\leq\ker\chi_1\), then multiplying by \(v_{n}\) does not change the first \(n\) coordinates in \eqref{eq:induct}, while
    \[\chi_{n+1}(g_{n+1})\,e^{2\pi i r_{o}/(q_{0}\cdots q_{n})}=\chi_{n+1}\left(v_{n}\right)\,\chi_{n+1}\left(g_{n}\right)\,e^{2\pi i r_{o}/(q_{0}\cdots q_{n})}=\omega_{n}a_{n+1}=z_{n+1}.\]
    Thus, \eqref{eq:induct} holds with \(n+1\). To verify that \(\left(g_{n}\right)_{n\in\mathbb{N}}\) is convergent in \(G\) note that for all \(m>n\),
    \[g_{m}g_{n}^{-1}=v_{m-1}\cdots v_{n}\in V_{n},\]
    so \(g_{m}\in V_{n}g_{n}\), and hence \(V_{1}g_{1}\supset V_{2}g_{2}\supset\dotsm\) are compact and nested. Since \(\bigcap_{n\in\mathbb{N}}V_{n}=\{e_G\}\), we have \(\bigcap_{n\in\mathbb{N}}V_{n}g_{n}=\{g_{o}\}\) for some \(g_{o}\in G\), namely \(g_{n}\to g_{o}\) as \(n\to\infty\).\\
    We can now conclude the surjectivity of \(\Phi\) by showing that \(\Phi\left(g_{o},r_{o}\right)=z\). For an arbitrary \(m\in\mathbb{N}\), the \(m^{\mathrm{th}}\) coordinate of \(\Phi\left(g_{n},r_{o}\right)\) equals \(z_{m}\) for all \(n\geq m\), and by continuity the \(m^{\mathrm{th}}\) coordinate of \(\Phi\left(g_{o},r_{o}\right)\) equals \(z_{m}\). Thus, \(\Phi\left(g_{o},r_{o}\right)=\left(z_{m}\right)_{m\in\mathbb{N}}=z\), and this proves the surjectivity of \(\Phi\).
    \item Denote the kernel
    \[\Gamma\coloneqq\ker\Phi=\left\{ \left(g,r\right)\in G\times\mathbb{R}:\chi\left(g\right)\oplus\rho\left(r\right)=e_{\mathbb{S}}\right\}.\]
    In order to show that \(\Gamma\) is discrete, we will show that
    \begin{equation}\label{eq:gamdisc}
    \Gamma\cap\left(V_{1}\times W\right)=\left\{ \left(e_{G},0\right)\right\}.
    \end{equation}
    Indeed, if \(\left(g,w\right)\in\Gamma\cap\left(V_{1}\times W\right)\), then \(\chi\left(g\right)\oplus\rho\left(w\right)=e_{\mathbb{S}}\) so that \(\chi\left(g\right)=\rho\left(-w\right)\in\rho\left(W\right)\), and in particular, \(\chi_{1}\left(g\right)=e^{-2\pi iw}\in I_{1}\). Recall \(V_{1}\leq\ker\chi_{1}\) by \eqref{eq:kersV}, so that \(e^{-2\pi iw}=\chi_{1}\left(g\right)=1\), hence \(w\in\mathbb{Z}\), and since \(w\in W\) it follows that \(w=0\) (recall \(\left|I_{1}\right|<1\) so that \(W\subseteq\left(-1/2,1/2\right)\)). Therefore, \(\chi\left(g\right)=e_{\mathbb{S}}\) and \(g\in\ker\chi\), concluding that \(\Gamma\cap\left(V_{1}\times W\right)\subseteq\left(V_{1}\cap\ker\chi\right)\times\left\{ 0\right\}\). However, from \eqref{eq:kersV} it follows that
    \[V_{1}\cap\ker\chi=\bigcap\nolimits_{n\in\mathbb{N}}V_{1}\cap\ker\chi_{n}\leq\bigcap\nolimits_{n\in\mathbb{N}}V_{n}=\left\{ e_{G}\right\},\]
    so that \eqref{eq:gamdisc} follows, and \(\Gamma<G\times\mathbb{R}\) is discrete.
\end{itemize}

Define the Borel space \(\Xi\coloneqq\Gamma\backslash\left(G\times\mathbb{R}\right)\). We obtain the Borel isomorphism (in fact, a homeomorphism),
\[\widetilde{\Phi}:\Xi\longrightarrow \mathbb{S},\qquad \widetilde{\Phi}\big(\Gamma(g,r)\big)=\Phi(g,r).\]
Since \(\mathbb{S}\) is compact, \(\Gamma<G\times\mathbb{R}\) is cocompact, and since \(\Gamma\) is also discrete we found that \(\Gamma<G\times\mathbb{R}\) forms a lattice, and thus \(G\) is cosolenoidal. Consider \(\Xi\) as a Borel \(G\times\mathbb{R}\)-space in the usual way, and turn \(\mathbb{S}\) into a Borel \(G\times\mathbb{R}\)-space via
\[\left(g,r\right).s=\Phi\left(g,r\right)\oplus s=\chi\left(g\right)\oplus\rho\left(r\right)\oplus s.\]
It is routine to verify that the map \(\widetilde{\Phi}\) is \(G\times\mathbb{R}\)-equivariant, and hence it forms an isomorphism of Borel \(G\times\mathbb{R}\)-spaces. Consequently, the pullback measure \(\xi\coloneqq m_{\mathbb{S}}\circ\widetilde{\Phi}\) on \(\Xi\) is the unique \(G\times\mathbb{R}\)-invariant probability measure, namely \(\xi=m_{\Gamma\backslash\left(G\times\mathbb{R}\right)}\). Altogether, \(\widetilde{\Phi}\) forms an isomorphism of probability preserving \(G\times\mathbb{R}\)-spaces of the solenoid \(\left(\mathbb{S},m_{\mathbb{S}}\right)\) and the homogeneous space \(\left(\Xi,\xi\right)\).

\subsubsection*{\textbf{Step~6}}

Consider now \(\mathbb{S}\) as a Borel \(G\)-space by restricting the action of \(G\times\mathbb{R}\) to \(G\), namely via
\[g.z=\chi\left(g\right)\oplus z.\]
Observe that since \(\left(\mathbb{S},m_{\mathbb{S}}\right)
\) is isomorphic to \(\left(\Xi,\xi\right)\), it is the cut--and--project \(G\)-space associated with the cut--and--project scheme \(\left(G,\mathbb{R};\Gamma\right)\). We now find a cross section \(S\subset\mathbb{S}\) with respect to this action of \(G\), in such a way that  \(\widetilde{\Phi}:\left(\Xi,\xi,Y_{W}\right)\to\left(\mathbb{S},m_{\mathbb{S}},S\right)\) forms a transverse \(G\)-isomorphism.

\smallskip

Define the Borel set
\[S\coloneqq\left(I_{1}\times I_{2}\times\dotsm\right)\cap\mathbb{S}\subset\mathbb{S}.\]
Pick a compact (symmetric) interval \(W\subset\mathbb{R}\) whose image under the map \(\mathbb{R}\to\mathbb{T}\), \(r\mapsto e^{2\pi ir}\), is the closed interval \(I_{1}\subset\mathbb{T}\), and we start by proving the identity
\begin{equation}\label{eq:SW}
S=\rho\left(W\right).
\end{equation}
For one inclusion, let \(\left(z_{n}\right)_{n\in\mathbb{N}}\in S\). Since \(z_{1}\in I_{1}\), there is \(w\in W\) with \(z_{1}=e^{2\pi iw}\). Now for every \(n\in\mathbb{N}\) we have \(z_{n}^{q_{0}\dotsm q_{n-1}}=z_{1}=e^{2\pi iw}\), and since \(z_{n}\in I_{n}\) while \(\left|I_{n}\right|=\frac{1}{q_{0}\dotsm q_{n-1}}\left|I_{1}\right|\) (by Step~4), necessarily \(z_{n}=e^{2\pi iw/\left(q_{0}\dotsm q_{n-1}\right)}\), hence \(\left(z_{n}\right)_{n\in\mathbb{N}}=\rho\left(w\right)\in\rho\left(W\right)\). For the opposite inclusion, let \(\left(z_{n}\right)_{n\in\mathbb{N}}\in\rho\left(W\right)\), so that \(\left(z_{n}\right)_{n\in\mathbb{N}}=\rho\left(w\right)\) for some \(w\in W\), and in particular \(z_{1}=e^{2\pi iw}\in I_{1}\). For every \(n\in\mathbb{N}\) we have \(z_{n}^{q_{0}\dotsm q_{n-1}}=z_{1}\in I_{1}\) so that \(z_{n}\in I_{1}^{1/\left(q_{0}\dotsm q_{n-1}\right)}=I_{n}\) by Step~4, hence \(\left(z_{n}\right)_{n\in\mathbb{N}}\in S\). This establishes \eqref{eq:SW}. As a consequence, and using the identity \eqref{eq:So}, we see that \(S\) is indeed a cross section:
\[G.S=\chi\left(G\right)\oplus S=\chi\left(G\right)\oplus\rho\left(\mathbb{R}\right)=\mathbb{S}.\]

With the compact interval \(W\subset\mathbb{R}\) as a window, form the cross section
\[Y_{W}=\left\{\Gamma\left(e_{G},w\right):w\in W\right\}.\]
Then using \eqref{eq:SW} we have the (pointwise) identity
\[\widetilde{\Phi}\left(Y_{W}\right)=\widetilde{\Phi}\left(\left\{\Gamma\left(e_{G},w\right):w\in W\right\} \right)=\left\{ \chi\left(e_{G}\right)\oplus\rho\left(w\right):w\in W\right\} =\rho\left(W\right)=S,\]
showing that \(\widetilde{\Phi}\) forms the desired transverse \(G\)-isomorphism.

\subsubsection*{\textbf{Step~7}}

In this final step, we will show that the desired transverse \(G\)-factor in the theorem is given by
\[\phi:\left(X,\mu,Y\right)\longrightarrow\left(\mathbb{S},m_{\mathbb{S}},S\right),\quad\phi\left(x\right)\coloneqq\left(f_{n}\left(x\right)\right)_{n\in\mathbb{N}}.\]

\smallskip

The \(G\)-equivariance of this \(\phi\) follows because \(f_{n}\) has eigencharacter \(\chi_{n}\) for every \(n\in\mathbb{N}\). Let us show that \(\phi\) is measure preserving, namely that \(\phi_{\ast}\mu=m_{\mathbb{S}}\).
For \(N\in\mathbb{N}\), let \(\Pi_{N}:\mathbb{S}\to\Pi_{N}(\mathbb{S})\) be the projection to the first \(N\) coordinates of \(\mathbb{S}\), and define the continuous surjective homomorphism
\[\psi_{N}:\mathbb{T}\longrightarrow \Pi_{N}(\mathbb{S}),\qquad\psi_{N}(z)\coloneqq\big(z^{\,q_{1}\cdots q_{n-1}}\big)_{n=1}^{N}.\]
Iterating the relations \(f_{n}=f_{n+1}^{q_{n}}\) gives \(f_{n}=f_{N}^{q_{n}\cdots q_{N-1}}\) for \(n\le N\), hence
\[\Pi_{N}\circ\phi=\psi_{N}\circ f_{N}.\]
Pushing forward \(\mu\) and using that \(f_{N}:(X,\mu)\to(\mathbb{T},m_{\mathbb{T}})\) is measure preserving, we get that
\[\left(\Pi_{N}\right)_{\ast}\left(\phi_{\ast}\mu\right)=\left(\psi_{N}\right)_{\ast}\big(\left(f_{N}\right)_{\ast}\mu\big)
=\left(\psi_{N}\right)_{\ast}m_{\mathbb{T}}
= m_{\Pi_{N}\left(\mathbb{S}\right)},
\]
where the last equality is the standard fact that the pushforward of Haar measure under a continuous surjective homomorphism of compact groups is Haar measure~\cite[Ex.~(15.17)(j)]{HewittRoss}. Since \(m_{\mathbb{S}}\) is characterized by \(\left(\Pi_{N}\right)_{\ast}m_{\mathbb{S}}=m_{\Pi_{N}\left(\mathbb{S}\right)}\) for all \(N\) (this is the inverse limit property;~\cite[Ex.~(15.17)(k)]{HewittRoss}), we conclude that \(\phi_{\ast}\mu=m_{\mathbb{S}}\).

In order to complete the proof that \(\phi\) is a transverse \(G\)-factor, we must show that
\[S_{\phi\left(x\right)}=\left(\phi^{-1}\left(S\right)\right)_{x}=Y_{x}\text{ for }\mu\text{-a.e. }x\in X.\]
(as in~\cite[Definition~5.8]{AvBjCuI}). The first equality holds because \(\phi\) is \(G\)-equivariant. For the second equality, denote \(Z\coloneqq\phi^{-1}\left(S\right)\), and by~\cite[Proposition 5.4]{AvBjCuI} it suffices to show that \(Z\) is \(\mu_{Y}\)-conull and that \(Y\) is \(\mu_{Z}\)-conull. In showing these two properties we recall that \(V_{n}\leq\ker\chi_{n}\) by \eqref{eq:kersV}, hence \(f_{n}^{-1}\left(I_{n}\right)\) is \(V_{n}\)-invariant for every \(n\in\mathbb{N}\).
\begin{description}[leftmargin=20pt, labelsep=1em]
    \item[\(\bullet\) \(Z\) is \(\mu_{Y}\)-conull] For every \(n\in\mathbb{N}\), since \(V_{n}.Y=f_{n}^{-1}\left(I_{n}\right)\) modulo \(\mu\), the set \(C_{n}\coloneqq V_{n}.Y\cap f_{n}^{-1}\left(I_{n}\right)\) is \(\mu\)-conull in \(V_{n}.Y\), hence
    \[\mu\left(C_{n}\right)=\mu\left(V_{n}.Y\right)=m_{G}\left(V_{n}\right)\cdot\mu_{Y}\left(Y\right).\]
    By Fubini's theorem there exist some \(v\in V_{n}\) and \(\mu_{Y}\)-conull subset \(Y_{n}\subseteq Y\) such that \(v.Y_{n}\subseteq C_{n}\subseteq f_{n}^{-1}\left(I_{n}\right)\). Since \(f_{n}^{-1}\left(I_{n}\right)\) is \(V_{n}\)-invariant, it follows that
    \[Y_{n}\subseteq Y\cap f_{n}^{-1}\left(Y\right)\subseteq V_{n}.Y\cap f_{n}^{-1}\left(Y\right).\]
    Intersecting over \(n\in\mathbb{N}\), we obtain the \(\mu_{Y}\)-conull set
    \[\bigcap\nolimits_{n\in\mathbb{N}}Y_{n}\subseteq\bigcap\nolimits_{n\in\mathbb{N}}V_{n}.Y\cap\bigcap\nolimits_{n\in\mathbb{N}}f_{n}^{-1}\left(I_{n}\right)\subseteq Y\cap\phi^{-1}\left(S\right)\subseteq Z,\]
    concluding that \(Z\) is \(\mu_{Y}\)-conull.
    \item[\(\bullet\) \(Y\) is \(\mu_{Z}\)-conull] For every \(n\in\mathbb{N}\), since \(Z\subseteq f_{n}^{-1}\left(I_{n}\right)\) and \(f_{n}^{-1}\left(I_{n}\right)\) is \(V_{n}\)-invariant, we have \(V_{n}.Z\subseteq f_{n}^{-1}\left(I_{n}\right)\). Since \(V_{n}.Y=f_{n}^{-1}\left(I_{n}\right)\) modulo \(\mu\), the set \(D_{n}\coloneqq V_{n}.Z\cap V_{n}.Y\) is \(\mu\)-conull in \(V_{n}.Z\). By Fubini's theorem there exist some \(v\in V_{n}\) and \(\mu_{Z}\)-conull subset \(Z_{n}\subseteq Z\) such that \(v.Z_{n}\subseteq D_{n}\subseteq V_{n}.Y\), and hence \(Z_{n}\subseteq V_{n}.Y\). Intersecting over \(n\in\mathbb{N}\), we obtain the \(\mu_{Z}\)-conull set
    \[\bigcap\nolimits_{n\in\mathbb{N}}Z_{n}\subseteq\bigcap\nolimits_{n\in\mathbb{N}}V_{n}.Y=Y,\]
    concluding that \(Y\) is \(\mu_{Z}\)-conull.
\end{description}

\section{A characterization of generalized Farey fractions}

\subsection{Strong F\o lner sequences}\label{sct:Banach}

Let \(G\) be an lcsc abelian group. An increasing exhaustion of \(G\) by compact sets \(\mathcal{F}=\left(F_{n}\right)_{n\in\mathbb{N}}\) is called a {\bf F\o lner sequence} if for every \(g\in G\),
\[\frac{m_{G}\left(F_{n}\triangle F_{n}g\right)}{m_{G}\left(F_{n}\right)}\xrightarrow[n\to\infty]{}0,\]
and it is called a {\bf strong F\o lner sequence}, if the following conditions hold:
\begin{enumerate}
    \item For every compact set \(K\subset G\),
    \[\frac{m_{G}\left(F_{n}K\right)}{m_{G}\left(F_{n}\right)}\xrightarrow[n\to\infty]{}1.\]
    \item \(\mathcal{F}\) is {\bf \(U\)-adapted}: there exists an identity neighborhood \(e_{G}\in U\subset G\) such that
    \[\frac{m_{G}\left(\bigcap\nolimits_{u\in U}F_{n}u^{-1}\right)}{m_{G}\left(F_{n}\right)}\xrightarrow[n\to\infty]{}1.\]
\end{enumerate}
Every abelian group admits a strong F\o lner sequence (see~\cite[\S2.3]{BjFi2024} and the references therein). For a strong F\o lner sequence \(\mathcal{F}=\left(F_{n}\right)_{n\in\mathbb{N}}\) in \(G\), the {\bf asymptotic density} of a locally finite set \(P\subset G\) is
\[d_{\mathcal{F}}\left(P\right)\coloneqq\varlimsup_{n\to\infty}\frac{\left|P\cap F_{n}\right|}{m_{G}\left(F_{n}\right)},\]
and the {\bf upper Banach density} of a locally finite set \(P\subset G\) is
\[d^{\ast}\left(P\right)=\sup\left\{ d_{\mathcal{F}}\left(P\right):\mathcal{F}\text{ is a strong F\o}\text{lner sequence in }G\right\}.\]

We record here a basic lemma on strong F\o lner sequences.

\begin{lem}\label{lem:foln}
Let \(\left(F_{n}\right)_{n\in\mathbb{N}}\) be a \(U\)-adapted strong F\o lner sequence and \(K\subseteq U\) a compact identity neighborhood. Then:
\begin{enumerate}
     \item The sequence \(\big(F_{n}K\big)_{n\in\mathbb{N}}\) is a strong F\o lner sequence in \(G\).
     \item Define \(\widetilde{F}_{n}\coloneqq\bigcap\nolimits_{u\in U}F_{n}u^{-1}\) for \(n\in\mathbb{N}\). The sequence \(\big(\widetilde{F}_{n}K\big)_{n\in\mathbb{N}}\) is a F\o lner sequence in \(G\) that satisfies \(m_{G}\big(\widetilde{F}_{n}K\big)/m_{G}\left(F_{n}\right)\xrightarrow[n\to\infty]{}1\).
\end{enumerate}
\end{lem}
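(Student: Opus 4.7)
\textbf{Proof plan for Lemma~\ref{lem:foln}.}

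The core observation I plan to exploit is the four-way asymptotic sandwich
\[
m_G(\widetilde{F}_n)\le m_G(\widetilde{F}_n K)\le m_G(F_n K),\qquad m_G(\widetilde{F}_n)\sim m_G(F_n)\sim m_G(F_n K),
\]
where $\sim$ means the ratio tends to $1$. The first $\sim$ is the $U$-adaptedness hypothesis, and the second comes from condition~(i) of the strong F\o lner property applied to the compact set $K$. Consequently all four quantities are asymptotic to $m_G(F_n)$, and I plan to invoke this freely.

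For part~(1), the sets $F_n K$ form a compact, monotonically increasing exhaustion of $G$ (with $F_n\subseteq F_n K$ giving exhaustion). For strong F\o lner condition~(i), given any compact $K'\subset G$,
\[
\frac{m_G(F_n K\cdot K')}{m_G(F_n K)}=\frac{m_G(F_n\cdot KK')}{m_G(F_n)}\cdot\frac{m_G(F_n)}{m_G(F_n K)}\longrightarrow 1,
\]
by condition~(i) for $(F_n)$ applied to $KK'$ and $K$. For the adaptedness~(ii) with the same $U$, commutativity of $G$ yields $\widetilde{F}_n\cdot K=\bigcap_{u\in U}(F_n u^{-1})\cdot K\subseteq\bigcap_{u\in U}F_n K u^{-1}$, and dividing $m_G(\widetilde{F}_n K)$ by $m_G(F_n K)$ gives a ratio tending to $1$.

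For part~(2), the density ratio $m_G(\widetilde{F}_n K)/m_G(F_n)\to 1$ is immediate from the sandwich, and compactness and monotonicity of $\widetilde{F}_n K$ are routine. Exhaustion requires the defining neighborhood to be relatively compact, a harmless reduction via local compactness: one may replace $U$ by a relatively compact open $U'\subseteq U$ with $K\subseteq U'$ without disturbing adaptedness, and then $g\overline{U'}$ is eventually contained in $F_n$ for every $g\in G$, so $g\in\widetilde{F}_n^{U'}\subseteq\widetilde{F}_n^{U'}K$. For the F\o lner property at a given $g\in G$, I would first note that $(F_n K)$ from part~(1) is itself F\o lner (every strong F\o lner sequence is F\o lner, by applying~(i) to $\{e_G,g,g^{-1}\}$) and then combine this with the set-theoretic bound
\[
\widetilde{F}_n K\,\triangle\,\widetilde{F}_n K g\subseteq(F_n K\setminus\widetilde{F}_n K)\cup(F_n K\,\triangle\,F_n K g)\cup(F_n K g\setminus\widetilde{F}_n K g).
\]
Dividing by $m_G(\widetilde{F}_n K)\sim m_G(F_n K)$, the middle summand vanishes by the F\o lner property of $(F_n K)$, while the outer two vanish since $m_G(F_n K\setminus\widetilde{F}_n K)/m_G(F_n)\to 0$.

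The only obstacle I anticipate is the cosmetic issue of the exhaustion of $\widetilde{F}_n K$, resolved by the local-compactness reduction above; all measure-theoretic estimates are insensitive to this replacement, since enlarging $\widetilde{F}_n$ only brings its measure closer to $m_G(F_n)$.
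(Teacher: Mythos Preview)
Your argument is correct and follows essentially the same template as the paper's proof: a sandwich on measures plus a symmetric-difference containment bound. The paper cites an external reference for part~(1), so your direct verification is a welcome addition. For part~(2) there is one small but instructive difference: the paper exploits the containment \(\widetilde{F}_n K\subseteq\widetilde{F}_n U\subseteq F_n\) (available precisely because \(K\subseteq U\)) and then runs the symmetric-difference argument against \((F_n)\) itself, obtaining
\[
\widetilde{F}_n K\,\triangle\,\widetilde{F}_n K g\subseteq(F_n\triangle F_n g)\cup(F_n\setminus\widetilde{F}_n K)\cup(F_n\setminus\widetilde{F}_n K)g,
\]
which avoids invoking part~(1). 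Your route through \((F_n K)\) is equally valid but logically one step longer. Your treatment of the exhaustion issue for \(\widetilde{F}_n K\) via a relatively compact \(U'\subseteq U\) is a point the paper passes over in silence; it is harmless for the applications but good that you flagged it.
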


\begin{proof}
For (1) we refer to~\cite[Lemma 2.8]{BjFi2024}. In order to prove (2), for \(n\in\mathbb{N}\) put \(K_{n}\coloneqq\widetilde{F}_{n}K\), and note that the sequence \(\left(K_{n}\right)_{n\in\mathbb{N}}\) satisfies two properties. (i): \(K_{n}=\widetilde{F}_{n}K\subseteq\widetilde{F}_{n}U\subseteq F_{n}\), and (ii):
\[1\xleftarrow[\infty\leftarrow n]{}\frac{m_{G}\big(\widetilde{F}_{n}\big)}{m_{G}\left(F_{n}\right)}\leq\frac{m_{G}\big(K_{n}\big)}{m_{G}\left(F_{n}\right)}\leq\frac{m_{G}\left(F_{n}K\right)}{m_{G}\left(F_{n}\right)}\xrightarrow[n\to\infty]{}1,\]
(since \(\left(F_{n}\right)_{n\in\mathbb{N}}\) is a strong F\o lner sequence). We now claim that any sequence \(\left(K_{n}\right)_{n\in\mathbb{N}}\) satisfying (i) and (ii) forms a F\o lner sequence. To see this let \(g\in G\) be arbitrary, and for each \(n\in\mathbb{N}\) we argue that
\begin{equation}\label{eq:foln}
K_{n}\triangle K_{n}g\subseteq\left(F_{n}\triangle F_{n}g\right)\cup\left(F_{n}\backslash K_{n}\right)\cup\left(F_{n}\backslash K_{n}\right)g.
\end{equation}
Indeed, for an arbitrary \(x\in K_{n}\backslash K_{n}g\), if \(x\notin F_{n}g\) then \(x\in F_{n}\backslash F_{n}g\subseteq F_{n}\triangle F_{n}g\), and if \(x\in F_{n}g\) then \(x\in\left(F_{n}g\right)\setminus\left(K_{n}g\right)=\left(F_{n}\setminus K_{n}\right)g\). This shows that \(K_{n}\backslash K_{n}g\subseteq\left(F_{n}\triangle F_{n}g\right)\cup\left(F_{n}\backslash K_{n}\right)g\). Similarly one shows that \(K_{n}g\backslash K_{n}\subseteq\left(F_{n}\triangle F_{n}g\right)\cup\left(F_{n}\backslash K_{n}\right)\), concluding \eqref{eq:foln}. Then it follows from \eqref{eq:foln} and the properties (i) and (ii) of \(\left(K_{n}\right)_{n\in\mathbb{N}}\) that
\begin{align*}
\frac{m_{G}\left(K_{n}\triangle K_{n}g\right)}{m_{G}\left(K_{n}\right)}
&\leq\frac{m_{G}\left(F_{n}\triangle F_{n}g\right)}{m_{G}\left(K_{n}\right)}+2\cdot\frac{m_{G}\left(F_{n}\backslash K_{n}\right)}{m_{G}\left(K_{n}\right)}\\
&=\frac{m_{G}\left(F_{n}\right)}{m_{G}\left(K_{n}\right)}\cdot\frac{m_{G}\left(F_{n}\triangle F_{n}g\right)}{m_{G}\left(F_{n}\right)}+2\cdot\Big(\frac{m_{G}\left(F_{n}\right)}{m_{G}\left(K_{n}\right)}-1\Big)\xrightarrow[n\to\infty]{}1\cdot0+2\cdot\left(1-1\right)=0.\qedhere
\end{align*}
\end{proof}

\subsection{Upper Banach density of cut--and--project sets}

Let \(\left(G,H;\Gamma\right)\) be an abelian cut--and--project scheme and \(W\subset H\) a \hyperlink{window}{window} such that \(W-W\) is Jordan measurable. Let the cut--and--project set
\[P\coloneqq\operatorname{proj}_{G}\left(\Gamma\cap\left(G\times W\right)\right)\subset G.\]

\begin{prop}\label{prop:denscap}
For every strong F\o lner sequence \(\mathcal{F}\) in \(G\), the asymptotic density of \(P\) is
\[d_{\mathcal{F}}\left(P\right)=\operatorname{covol}_{G\times H}\left(\Gamma\right)^{-1}\cdot m_{H}\left(W\right),\]
and the asymptotic density of \(P-P\) satisfies
\[\operatorname{covol}_{G\times\mathbb{R}}\left(\Gamma\right)^{-1}\cdot m_{H}\left(W^{o}-W^{o}\right)\leq d_{\mathcal{F}}\left(P-P\right)\leq\operatorname{covol}_{G\times\mathbb{R}}\left(\Gamma\right)^{-1}\cdot m_{H}\left(W-W\right).\]
In particular, when \(H=\mathbb{R}\) and \(W\subset\mathbb{R}\) is a compact interval,
\[d_{\mathcal{F}}\left(P-P\right)=2\cdot\operatorname{covol}_{G\times\mathbb{R}}\left(\Gamma\right)^{-1}\cdot m_{\mathbb{R}}\left(W\right)=2\cdot d_{\mathcal{F}}\left(P\right).\]
\end{prop}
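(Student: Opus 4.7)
The plan is to realize $P$ as the return times set of the cut-and-project cross section at the base point $x_0 = \Gamma(e_G, e_H) \in \Xi$, and to convert the asymptotic density into the intensity $\iota_\xi(Y_W)$ from Theorem~\ref{mthm:cap}(1) by invoking the unique ergodicity of $(\Xi, \xi)$ as a Borel $G$-space. A direct computation of return times gives $(Y_W)_{x_0} = P$, and hence $(V \cdot Y_W)_{x_0} = V + P$ for any identity neighborhood $V \subset G$. The first step is to choose a relatively compact open symmetric identity neighborhood $V$ with $m_G(\partial V) = 0$, small enough so that $V - V$ is a separating neighborhood for $Y_W$; this is possible because $\Lambda_{Y_W}$ is uniformly discrete. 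The translates $\{V + p : p \in P\}$ are then pairwise disjoint, so
\[ m_G(V) \cdot |P \cap (F_n \ominus V)| \,\leq\, m_G(F_n \cap (V + P)) \,\leq\, m_G(V) \cdot |P \cap (F_n - V)|, \]
where $F_n \ominus V := \{g \in G : g + V \subseteq F_n\}$. The Birkhoff-type averages of $\mathbf{1}_{V \cdot Y_W}$ along $(F_n)$ converge to $\xi(V \cdot Y_W)$ by unique ergodicity---the passage from continuous test functions to the indicator $\mathbf{1}_{V \cdot Y_W}$ goes through a lower/upper semicontinuous sandwich, legitimate because $\xi(\partial(V \cdot Y_W)) = 0$ (which follows from the choice of $V$ and Jordan measurability of $W$ together with the Weil formula). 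The separated-cross-section identity then gives $\xi(V \cdot Y_W) = m_G(V) \cdot \iota_\xi(Y_W)$; combined with the strong F\o lner property (by which $m_G(F_n \ominus V)/m_G(F_n)$ and $m_G(F_n - V)/m_G(F_n)$ both tend to $1$, as in Lemma~\ref{lem:foln}), the two outer densities in the sandwich pinch to yield $d_\mathcal{F}(P) = \operatorname{covol}_{G \times H}(\Gamma)^{-1} \cdot m_H(W)$.

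For the density of $P - P$, observe directly that $P - P \subseteq \operatorname{proj}_G(\Gamma \cap (G \times (W - W)))$. The reverse-type inclusion $P - P \supseteq \operatorname{proj}_G(\Gamma \cap (G \times (W^o - W^o)))$ is shown as follows: given $\gamma \in \Gamma$ with $\operatorname{proj}_H(\gamma) = w_1 - w_2$ for some $w_1, w_2 \in W^o$, use density of $\operatorname{proj}_H(\Gamma)$ in $H$ to pick $\lambda = (\lambda_G, \lambda_H) \in \Gamma$ with $\lambda_H$ close enough to $w_1$ so that both $\lambda_H$ and $\lambda_H - \operatorname{proj}_H(\gamma)$ lie in $W$; then $\gamma = \lambda - (\lambda - \gamma)$ with both $\lambda$ and $\lambda - \gamma$ in $\Gamma \cap (G \times W)$, producing $\operatorname{proj}_G(\gamma) \in P - P$. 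The first part of the proposition then applies to the Jordan measurable window $W - W$ for the upper bound, and to inner Jordan-measurable exhaustions of the open relatively compact set $W^o - W^o$ combined with monotonicity of $d_\mathcal{F}$ under inclusion for the lower bound, producing the claimed sandwich. In the specialization $H = \mathbb{R}$ with compact interval $W$, the two bounds coincide since $m_\mathbb{R}(W - W) = m_\mathbb{R}(W^o - W^o) = 2 m_\mathbb{R}(W)$, giving the final identity $d_\mathcal{F}(P - P) = 2 d_\mathcal{F}(P)$.

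The hard part is getting convergence at the specific point $x_0$ rather than $\xi$-almost everywhere: classical pointwise ergodic theorems along F\o lner sequences only give the latter. This is overcome by invoking unique ergodicity of $(\Xi, \xi)$ as a Borel $G$-space, which upgrades the convergence to be uniform in $x$ on continuous test functions, and then by passing to the indicator $\mathbf{1}_{V \cdot Y_W}$ through a semicontinuous sandwich controlled by $\xi(\partial(V \cdot Y_W)) = 0$. The handling of the (possibly non-Jordan-measurable) window $W^o - W^o$ in the $P - P$ bound is accomplished by inner Jordan-measurable exhaustion, analogously to Lemma~\ref{lem:capapp}.
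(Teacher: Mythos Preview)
Your approach is correct and essentially coincides with the paper's. Both arguments pivot on the unique ergodicity of the cut--and--project \(G\)-space \((\Xi,\xi)\) to force convergence of ergodic averages at the specific base point \(x_0=\Gamma(e_G,e_H)\), and both must pass from continuous test functions to something discontinuous via a Jordan-measurability/Riemann-integrability argument. The only cosmetic difference is the choice of averaging function: the paper mollifies the counting function \(|P\cap F_n|\) by convolving \(\mathbf 1_{F_n}\) with a continuous bump \(w\) and then works with the Riemann-integrable Siegel transform \(\varphi(\Gamma(g,h))=\sum_{\gamma\in\Gamma}w(g^{-1}\gamma_1)\mathbf 1_W(h^{-1}\gamma_2)\), whereas you thicken \(Y_W\) by a Jordan-measurable neighborhood \(V\) and work with the indicator \(\mathbf 1_{V\cdot Y_W}\), invoking \(\xi(\partial(V\cdot Y_W))=0\). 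These are two standard packagings of the same mechanism. Your route has the minor advantage of reusing the intensity formula from Theorem~\ref{mthm:cap}(1) rather than recomputing \(\int_\Xi\varphi\,d\xi\) via Weil, and your inner Jordan-measurable exhaustion for the lower bound on \(d_{\mathcal F}(P-P)\) is slightly more explicit than the paper's direct appeal to the first part with the open window \(W^o-W^o\). The \(P-P\) inclusions are proved identically.
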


\begin{proof}
Let \(\mathcal{F}=\left(F_{n}\right)_{n\in\mathbb{N}}\) be a strong F\o lner sequence. Since \(\Gamma\) projects injectively to \(G\), we may write
\begin{equation}\label{eq:pdens}
\left|P\cap F_{n}\right|=\left|\operatorname{proj}_{G}\left(\Gamma\cap\left(F_{n}\times W\right)\right)\right|=\sum\nolimits_{\left(\gamma_{1},\gamma_{2}\right)\in\Gamma}\mathbf{1}_{F_{n}}\left(\gamma_{1}\right)\cdot \mathbf{1}_{W}\left(\gamma_{2}\right),\quad n\in\mathbb{N}.
\end{equation}
Pick a compact symmetric identity neighborhood \(e_{G}\in K\subset G\) and a compactly supported continuous function \(w:G\to\mathbb{R}_{\geq 0}\) with \(m_{G}\left(w\right)=1\) and \(\operatorname{supp}\left(w\right)\subseteq K\). Using~\cite[Lemma 5.11]{BjHa2022} we have
\[\mathbf{1}_{F_{n}}\leq\mathbf{1}_{F_{n}K}\ast w,\]
where \(\ast\) is the convolution on \(G\). It follows from \eqref{eq:pdens} and Fubini's theorem that
\begin{align*}
\left|P\cap F_{n}\right|
&\leq\sum\nolimits_{\left(\gamma_{1},\gamma_{2}\right)\in\Gamma}\left(\mathbf{1}_{F_{n}K}\ast w\right)\left(\gamma_{1}\right)\cdot\mathbf{1}_{W}\left(\gamma_{2}\right)\\
&=\int_{F_{n}K}\Big(\sum\nolimits_{\left(\gamma_{1},\gamma_{2}\right)\in\Gamma}w\left(g^{-1}\gamma_{1}\right)\cdot\mathbf{1}_{W}\left(\gamma_{2}\right)\Big)dm_{G}\left(g\right)\coloneqq\int_{F_{n}K}\varphi\left(\Gamma\left(g,e_{H}\right)\right)dm_{G}\left(g\right),
\end{align*}
where we put
\[\varphi:\Gamma\backslash\left(G\times H\right)\longrightarrow\mathbb{R}_{\geq 0},\quad \varphi\left(\Gamma\left(g,h\right)\right)\coloneqq\sum\nolimits_{\left(\gamma_{1},\gamma_{2}\right)\in\Gamma}w\left(g^{-1}\gamma_{1}\right)\cdot\mathbf{1}_{W}\left(h^{-1}\gamma_{2}\right).\]
We then obtain, using that \(\mathcal{F}\) is a strong F\o lner sequence,
\[d_{\mathcal{F}}\left(P\right)\leq\varlimsup_{n\to\infty}\frac{1}{m_{G}\left(F_{n}K\right)}\int_{F_{n}K}\varphi\left(\Gamma\left(g,e_{H}\right)\right)dm_{G}\left(g\right)=\varlimsup_{n\to\infty}\frac{1}{m_{G}\left(F_{n}K\right)}\int_{F_{n}K}g.\varphi\left(\Gamma\left(e_{G},e_{H}\right)\right)dm_{G}\left(g\right),\]
where in the last equality we used that \(\varphi\left(\Gamma\left(g,e_{H}\right)\right)=g.\varphi\left(\Gamma\left(e_{G},e_{H}\right)\right)\) for every \(g\in G\). Recall that \(\left(F_{n}K\right)_{n\in\mathbb{N}}\) is a F\o lner sequence by Lemma~\ref{lem:foln}(1), and that the \(G\)-space \(\Gamma\backslash\left(G\times H\right)\) is uniquely ergodic. Observe also that \(\varphi\) is Riemann integrable, since \(\mathbf{1}_{W}\) is Riemann integrable and \(w\) is continuous and compactly supported while \(\Gamma\) is uniformly discrete. Then by the Portmanteau theorem, the ergodic averages of \(\varphi\) along \(\left(F_{n}K\right)_{n\in\mathbb{N}}\) converge at the point \(\Gamma\left(e_{G},e_{H}\right)\in\Gamma\backslash\left(G\times H\right)\), concluding that
\[d_{\mathcal{F}}\left(P\right)\leq m_{\Gamma\backslash\left(G\times H\right)}\left(\varphi\right)=\operatorname{covol}_{G\times H}\left(\Gamma\right)^{-1}\cdot\int_{G}w\left(g\right)dm_{G}\left(g\right)\cdot m_{H}\left(W\right)=\operatorname{covol}_{G\times H}\left(\Gamma\right)^{-1}\cdot m_{H}\left(W\right).\]
To deduce the converse inequality, with \(\mathcal{F}\) being \(U\)-adapted, pick a compact symmetric identity neighborhood \(e_{G}\in K\subseteq U\) with \(KK\subseteq U\), and a compactly supported continuous function \(w:G\to\mathbb{R}_{\geq 0}\) with \(m_{G}\left(w\right)=1\) and \(\operatorname{supp}\left(w\right)\subseteq K\). For \(n\in\mathbb{N}\) put \(\widetilde{F}_{n}\coloneqq\bigcap\nolimits_{u\in U}F_{n}u^{-1}\), and using~\cite[Lemma 5.11]{BjHa2022} together with that \(\widetilde{F}_{n}KK\subseteq\widetilde{F}_{n}U\subseteq F_{n}\), we have
\[\mathbf{1}_{\widetilde{F}_{n}}\leq\mathbf{1}_{\widetilde{F}_{n}K}\ast w\leq\mathbf{1}_{\widetilde{F}_{n}KK}\leq\mathbf{1}_{\widetilde{F}_{n}U}\leq\mathbf{1}_{F_{n}}.\]
It then follows from \eqref{eq:pdens}, with the same function \(\varphi\) as above, that
\[\left|P\cap F_{n}\right|\geq\sum\nolimits_{\left(\gamma_{1},\gamma_{2}\right)\in\Gamma}\big(\mathbf{1}_{\widetilde{F}_{n}K}\ast w\big)\left(\gamma_{1}\right)\cdot\mathbf{1}_{W}\left(\gamma_{2}\right)=\int_{\widetilde{F}_{n}K}\varphi\left(\Gamma\left(g,e_{H}\right)\right)dm_{G}\left(g\right).\]
By Lemma~\ref{lem:foln}(2), \(\big(\widetilde{F}_{n}K\big)_{n\in\mathbb{N}}\) is a F\o lner sequence with \(m_{G}\big(\widetilde{F}_{n}K\big)/m_{G}\left(F_{n}\right)\xrightarrow[n\to\infty]{}1\), so by the same reasoning as above it follows that
\[d_{\mathcal{F}}\left(P\right)\geq\varlimsup_{n\to\infty}\frac{m_{G}\big(\widetilde{F}_{n}K\big)}{m_{G}\left(F_{n}\right)}\cdot m_{\Gamma\backslash\left(G\times H\right)}\left(\phi\right)=\operatorname{covol}_{G\times H}\left(\Gamma\right)^{-1}\cdot m_{H}\left(W\right).\]
We thus deduce that \(d_{\mathcal{F}}\left(P\right)=\operatorname{covol}_{G\times H}\left(\Gamma\right)\cdot m_{H}\left(W\right)\), proving (1).

In order to establish the bounds for \(d_{\mathcal{F}}\left(P-P\right)\), it suffices to show generally that
\begin{equation}\label{eq:incW}
\operatorname{proj}_{G}\left(\Gamma\cap\left(G\times\left(W^{o}-W^{o}\right)\right)\right)\subseteq P-P\subseteq\operatorname{proj}_{G}\left(\Gamma\cap\left(G\times\left(W-W\right)\right)\right),
\end{equation}
from which it follows, using the formula we have already proved, that
\[\operatorname{covol}_{G\times H}\left(\Gamma\right)^{-1}\cdot m_{H}\left(W^{o}-W^{o}\right)\leq d_{\mathcal{F}}\left(P-P\right)\leq\operatorname{covol}_{G\times H}\left(\Gamma\right)^{-1}\cdot m_{H}\left(W-W\right).\]
We then verify \eqref{eq:incW}. The second inclusion is straightforward. For the first inclusion we use that \(\Gamma\) projects densely to \(\mathbb{R}\): suppose \(g\in\operatorname{proj}_{G}\left(\Gamma\cap\left(G\times\left(W^{o}-W^{o}\right)\right)\right)\) so that \(\left(g,t\right)\in\Gamma\) for some \(t\in W^{o}-W^{o}\). Pick \(s\in W^{o}\) with \(t+s\in W^{o}\) and let \(\delta>0\). Since \(\Gamma\) projects densely to \(\mathbb{R}\) we can pick \(\left(g^{\prime},t^{\prime}\right)\in\Gamma\) with \(\left|t^{\prime}-s\right|<\delta\), and therefore \(t^{\prime}\in W\) so that \(g^{\prime}\in P\). Note that \(\left|t+t^{\prime}-\left(t+s\right)\right|=\left|t^{\prime}-s\right|<\delta\), and therefore if \(\delta\) is sufficiently small we deduce that \(t+t^{\prime}\in W\). It now readily follows that
\[g=\left(g+g^{\prime}\right)-g\in P-P.\qedhere\]
\end{proof}

\subsection{The Furstenberg correspondence principle for intensity and intersection covolume}

Let \(P\subset G\) be such that \(P-P\) is uniformly discrete. Denote by \(\Omega_{P}\) the \(G\)-hull of \(P\), which is the closure of the orbit of \(P\) in the space of closed subsets of \(G\), and by \(\Omega_{P}^{\times}=\Omega_{P}\backslash\{\emptyset\}\) the punctured \(G\)-hull of \(P\). Then \(\Omega_{P}^{\times}\) is a Borel \(G\)-space, and it admits the natural cross section
\[Y_{P}=\left\{P^{\prime}\in\Omega_{P}^{\times}:e_{G}\in P^{\prime}\right\}.\]
For details, we refer to~\cite[\S2.2]{BjFi2024}. With the goal of deducing Theorem~\ref{mthm:farey} from Theorems~\ref{mthm:mineq} and~\ref{mthm:mineqext}, we use the transverse version of the Furstenberg correspondence principle due to the second author and Fish~\cite{BjFi2024}:

\begin{prop}\label{prop:furst}
Let \(G\) be a \hyperlink{classQ}{class \(\mathcal{Q}\)} group. Then for every \(P\subset G\) with \(d^{\ast}\left(P\right)>0\) and such that \(P-P\) is uniformly discrete, there exists an ergodic \(G\)-invariant probability measure \(\mu\) on \(\Omega_{P}^{\times}\) satisfying
\begin{equation}\label{eq:furst}
d^{\ast}\left(P-P\right)\geq I_{\mu}\left(Y_{P}\right)\geq 2\cdot\iota_{\mu}\left(Y_{P}\right)=2\cdot d^{\ast}\left(P\right).
\end{equation}
\end{prop}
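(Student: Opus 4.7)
The plan is to assemble the three inequalities in~\eqref{eq:furst} from three distinct ingredients: two of them are encapsulated in the transverse Furstenberg correspondence principle of~\cite{BjFi2024}, and the middle one is Theorem~\ref{mthm:mineq}. First, I would invoke~\cite{BjFi2024} to produce the ergodic $G$-invariant measure $\mu$ on $\Omega_{P}^{\times}$ together with the identity $\iota_{\mu}(Y_{P}) = d^{\ast}(P)$. Concretely, one fixes a strong F\o lner sequence $(F_{n})$ realizing $d^{\ast}(P)$, takes a weak-$\ast$ limit of the empirical measures
\[
\mu_{n} = \frac{1}{m_{G}(F_{n})}\int_{F_{n}}\delta_{g.P}\,dm_{G}(g)
\]
on the compact hull $\Omega_{P}$, and passes to an ergodic component charging $\Omega_{P}^{\times}$. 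Because $P-P$ is uniformly discrete the cross section $Y_{P}$ is separated, so the Campbell identity $m_{G}(U)\iota_{\mu}(Y_{P}) = \mu(U.Y_{P})$ holds for any small enough compact-open $U$; matching both sides with F\o lner counts of $P \cap F_{n}U^{-1}$ then forces the claimed intensity.

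The middle inequality $I_{\mu}(Y_{P}) \geq 2\iota_{\mu}(Y_{P})$ is a direct application of Theorem~\ref{mthm:mineq}, once I check that $\Lambda_{Y_{P}}$ is uniformly discrete. This follows from uniform discreteness of $P-P$: the set $P-P$ is closed in $G$, and continuity of the difference map on the hull gives $P'-P' \subseteq \overline{P-P} = P-P$ for every $P' \in \Omega_{P}^{\times}$, so $\Lambda_{Y_{P}} \subseteq P-P$ is uniformly discrete. Since $G$ is class $\mathcal{Q}$ and $\mu$ is ergodic, Theorem~\ref{mthm:mineq} gives the desired bound.

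The leftmost inequality $d^{\ast}(P-P) \geq I_{\mu}(Y_{P})$ is the main obstacle, and is the technical heart of the correspondence in~\cite{BjFi2024}. The strategy is to use the defining identity $I_{\mu}(Y_{P}) = \iota_{\mu\otimes\mu}(Y_{P}^{[2]})$ from Definition~\ref{dfn:intvol}, together with the observation that the return times of $Y_{P}^{[2]}$ at a point $(P',P'') \in Y_{P}^{[2]}$ are exactly $P' - P'' \subseteq P-P$. A F\o lner-empirical computation on the product transverse $G$-space $(\Omega_{P}^{\times}\times\Omega_{P}^{\times},\mu\otimes\mu,Y_{P}^{[2]})$, with $G$ acting on the first coordinate, expresses $\iota_{\mu\otimes\mu}(Y_{P}^{[2]})$ as a limiting count of \emph{differences} $p_{1}-p_{2}$ with $p_{1},p_{2} \in P$ lying in appropriate F\o lner windows, and this count is dominated by $d^{\ast}(P-P)$. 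The delicate point—and the principal obstacle—is that $\mu\otimes\mu$ is a measure on the product hull that is not directly realized by a single F\o lner sequence in $G$ acting on a single coordinate, so one needs a careful tightness and diagonal-equidistribution argument to transfer the transverse counts of $Y_{P}^{[2]}$ in the product system to density counts of $P-P$ inside a single copy of $G$.
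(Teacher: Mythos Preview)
Your three-piece decomposition matches the paper exactly, and your treatment of the equality $\iota_{\mu}(Y_{P})=d^{\ast}(P)$ via~\cite{BjFi2024} and of the middle inequality via Theorem~\ref{mthm:mineq} (using $P'-P'\subseteq P-P$ from~\cite[Lemma~2.4]{BjFi2024} to get uniform discreteness of $\Lambda_{Y_{P}}$) is essentially identical to the paper's.

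Where you diverge is the leftmost inequality $d^{\ast}(P-P)\geq I_{\mu}(Y_{P})$. This is \emph{not} a black-box citation from~\cite{BjFi2024}; the paper proves it in full here, and by a route different from the one you sketch. You propose to work directly with $\mu\otimes\mu$ on the product hull and invoke some form of ``diagonal-equidistribution'' to relate transverse counts of $Y_{P}^{[2]}$ to F\o lner counts of $P-P$; you correctly flag that $\mu\otimes\mu$ is not realized by a single F\o lner sequence in one coordinate, and this is indeed a genuine obstruction to your approach as stated. The paper sidesteps it entirely: it (i) proves that the thickened cross section $(V-V)\first Y_{P}^{[2]}$ is \emph{open} in the Chabauty--Fell topology on $\Omega_{P}^{\times}\times\Omega_{P}^{\times}$; (ii) for $\mu$-a.e.\ fixed $P'$, applies the pointwise ergodic theorem along a (tempered) F\o lner sequence in the \emph{first} coordinate to get weak-$\ast$ convergence of $m_{G}(F_{n})^{-1}\int_{\widetilde{F}_{n}K}\delta_{g.P'}\otimes\delta_{P'}\,dm_{G}(g)$ to $\mu\otimes\delta_{P'}$; (iii) uses Portmanteau for the open set in (i) to pass to a $\varliminf$; and (iv) integrates the resulting pointwise bound $d\mu(P')$ to upgrade $\mu\otimes\delta_{P'}$ to $\mu\otimes\mu$. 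The combinatorial input is the same inclusion $P'-P'\subseteq P-P$ you already used, which converts counts of $(P'+V)-(P'+V)$ in $\widetilde{F}_{n}$ into counts of $(P-P)\cap F_{n}$ up to the factor $m_{G}(V-V)$. A bonus of this route, which your sketch does not capture, is that the leftmost inequality holds for \emph{every} ergodic $G$-invariant probability measure on $\Omega_{P}^{\times}$, not only the one produced by~\cite{BjFi2024}.
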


\begin{proof}
By~\cite[Theorem 3.1]{BjFi2024} there exists an ergodic \(G\)-invariant probability measure \(\mu\) on \(\Omega_{P}^{\times}\) satisfying
\[\iota_{\mu}\left(Y_{P}\right)=d^{\ast}\left(P\right).\]
We show that this \(\mu\) automatically satisfies \eqref{eq:furst}. In fact, we will show that both inequalities in \eqref{eq:furst} holds for any ergodic \(G\)-invariant probability measure \(\mu\) on \(\Omega_{P}^{\times}\). Indeed, the middle inequality in \eqref{eq:furst} holds generally due to Theorem~\ref{mthm:mineq}. For the first inequality in \eqref{eq:furst}, we will show that for every strong F\o lner sequence \(\mathcal{F}=\left(F_{n}\right)_{n\in\mathbb{N}}\) in \(G\),
\begin{equation}\label{eq:firstineq}
d_{\mathcal{F}}\left(P-P\right)\geq\varliminf_{n\to\infty}\frac{\left|\left(P-P\right)\cap F_{n}\right|}{m_{G}\left(F_{n}\right)}\geq I_{\mu}\left(Y_{P}\right).
\end{equation}

Pick an identity neighborhood \(e_{G}\in U\subset G\) such that \(\left(P-P\right)^{2}\cap U=\left\{ e_{G}\right\}\) (as \(P-P\) is uniformly discrete), and a symmetric compact identity neighborhood \(e_{g}\in V\) with \(V^{4}\subseteq U\). We then have
\[\left(P+V\right)-\left(P+V\right)=\bigsqcup\nolimits_{p\in P-P}\left(p+V-V\right)\]
as a disjoint union, and consequently, for every \(U\)-adapted strong F\o lner sequence \(\mathcal{F}=\left(F_{n}\right)_{n\in\mathbb{N}}\), we have
\[m_{G}\big(\left(\left(P+V\right)-\left(P+V\right)\right)\cap\widetilde{F}_{n}\big)\leq m_{G}\left(V-V\right)\cdot\left|\left(P-P\right)\cap F_{n}\right|.\]
Since \(P-P\) is discrete, by~\cite[Lemma 2.4]{BjFi2024} for every \(P^{\prime}\in\Omega_{P}^{\times}\) it holds that \(P^{\prime}-P^{\prime}\subseteq P-P\), and thus
\[m_{G}\big(\left(\left(P^{\prime}+V\right)-\left(P^{\prime}+V\right)\right)\cap\widetilde{F}_{n}\big)\leq m_{G}\left(V-V\right)\cdot\left|\left(P-P\right)\cap F_{n}\right|.\]
We then obtain the bound
\begin{equation}\label{eq:genbnd}
\begin{aligned}
d_{\mathcal{F}}\left(P-P\right)	&=\varlimsup_{n\to\infty}\frac{\left|\left(P-P\right)\cap F_{n}\right|}{m_{G}\left(F_{n}\right)}\\
&\geq\frac{1}{m_{G}\left(V-V\right)}\cdot\varlimsup_{n\to\infty}\frac{m_{G}\big(\left(\left(P^{\prime}+V\right)-\left(P^{\prime}+V\right)\right)\cap\widetilde{F}_{n}\big)}{m_{G}\left(F_{n}\right)},\quad P^{\prime}\in\Omega_{P}^{\times}.
\end{aligned}
\end{equation}

Consider \(\Omega_{P}^{\times}\times\Omega_{P}^{\times}\) as a Borel \(G\)-space (\(G\) acts on the first coordinate), and the cross section
\[Y_{P}^{\left[2\right]}\coloneqq G\diag\left(Y_{P}\times Y_{P}\right)=\{\left(P^{\prime},P^{\prime\prime}\right)\in\Omega_{P}^{\times}\times\Omega_{P}^{\times}:P^{\prime}\cap P^{\prime\prime}\neq\emptyset\}.\]
The return time sets are of the form
\[\left(P^{\prime}+V\right)-\left(P^{\prime}+V\right)=\big(\left(V-V\right)\first Y_{P}^{\left[2\right]}\big)_{\left(P^{\prime},P^{\prime}\right)},\quad P^{\prime}\in\Omega_{P}^{\times}.\]
We next claim that \(\left(V-V\right)\first Y_{P}^{[2]}\) is an open set (in the Chabauty--Fell topology). To see this, define
\[R\coloneqq\{\left(g,h\right)\in G\times G:g-h\in V-V\},\]
and note that \(R\) is open in \(G\times G\). It is now routine to verify that
\[\left(V-V\right)\first Y_{P}^{[2]}=\bigl\{\left(P^{\prime},P^{\prime\prime}\right)\in\Omega_{P}^{\times}\times\Omega_{P}^{\times}:(P^{\prime}\times P^{\prime\prime})\cap R\neq\emptyset\bigr\},\]
hence by the definition of the Chabauty--Fell topology this set is indeed open.

Let now \(\mu\) be an arbitrary ergodic \(G\)-invariant probability measure on \(\Omega_{P}^{\times}\). Fix a compact identity neighborhood \(K\subseteq U\). By Lemma~\ref{lem:foln}(2), the sequence \(\big(\widetilde{F}_{n}K\big)_{n\in\mathbb{N}}\) is a F\o lner sequence in \(G\) and satisfies \(m_{G}\big(\widetilde{F}_{n}K\big)/m_{G}(F_{n})\xrightarrow[n\to\infty]{}1\).  
After passing to a tempered subsequence if necessary, the pointwise ergodic theorem yields a \(\mu\)-conull set \(\Omega_{o}\subseteq\Omega_{P}^{\times}\) such that, for every \(P^{\prime}\in\Omega_{o}\),
\[\frac{1}{m_{G}\big(F_{n}\big)}\int_{\widetilde{F}_{n}K}\delta_{g.P^{\prime}}\,dm_{G}(g)
\xrightarrow[n\to\infty]{w^{\ast}}\mu,\]
and hence also
\[\frac{1}{m_{G}\big(F_{n}\big)}\int_{\widetilde{F}_{n}K}
\delta_{g.P^{\prime}}\otimes\delta_{P^{\prime}}\,dm_{G}(g)
\xrightarrow[n\to\infty]{w^{\ast}}
\mu\otimes\delta_{P^{\prime}}.\]
Since \(\left(V-V\right)\first Y_{P}^{[2]}\) is open, the Portmanteau theorem yields
\[\varliminf_{n\to\infty}\frac{1}{m_{G}\big(F_{n}\big)}
\int_{\widetilde{F}_{n}K}\mathbf{1}_{\left(V-V\right)\first Y_{P}^{\left[2\right]}}(g.P^{\prime},P^{\prime})\,dm_{G}\left(g\right)\geq\mu\otimes\delta_{P^{\prime}}\big(\left(V-V\right)\first Y_{P}^{\left[2\right]}\big),\]
and since \(\mathbf{1}_{\left(V-V\right)\first Y_{P}^{\left[2\right]}}(g.P^{\prime},P^{\prime})
=\mathbf{1}_{\left(P^{\prime}+V\right)-\left(P^{\prime}+V\right)}(g)\), we obtain
\[\varliminf_{n\to\infty}\frac{m_{G}\big(\big(\left(P^{\prime}+V\right)-\left(P^{\prime}+V\right)\big)\cap\widetilde{F}_{n}K\big)}{m_{G}\big(F_{n}\big)}\geq\mu\otimes\delta_{P^{\prime}}\big(\left(V-V\right)\first Y_{P}^{\left[2\right]}\big).\]

By the definition of strong F\o lner sequence and by Lemma~\ref{lem:foln}(2), we have
\[\Big|\frac{m_{G}\big(B\cap\widetilde F_{n}K\big)}{m_{G}(F_{n})}-\frac{m_{G}\big(B\cap \widetilde F_{n}\big)}{m_{G}(F_{n})}\Big|\leq\frac{m_{G}(\widetilde F_{n}K)}{m_{G}(F_{n})}-\frac{m_{G}(\widetilde F_{n})}{m_{G}(F_{n})}\xrightarrow[n\to\infty]{}0\]
for any measurable set \(B\subset G\). Therefore we conclude that
\[\varliminf_{n\to\infty}\frac{m_{G}\big(\big(\left(P^{\prime}+V\right)-\left(P^{\prime}+V\right)\big)\cap \widetilde F_{n}\big)}{m_{G}\left(F_{n}\right)}\geq\mu\otimes\delta_{P^{\prime}}\big(\left(V-V\right)\first Y_P^{[2]}\big).\]
Integrating this inequality \(d\mu\left(P^{\prime}\right)\) over \(\Omega_{o}\) and using \eqref{eq:genbnd}, we deduce the desired inequality \eqref{eq:firstineq}:
\begin{align*}
d_{\mathcal{F}}\left(P-P\right)
&\geq\frac{1}{m_{G}\left(V-V\right)}\cdot\mu^{\otimes2}\big(\left(V-V\right)\first Y_{P}^{\left[2\right]}\big)\\
&=\frac{1}{m_{G}\left(V-V\right)}\cdot m_{G}\left(V-V\right)\cdot\mu^{\left[2\right]}\big(Y_{P}^{\left[2\right]}\big)=\mu^{\left[2\right]}\big(Y_{P}^{\left[2\right]}\big)=I_{\mu}\left(Y_{P}\right).\qedhere
\end{align*}
\end{proof}

\subsection{Generalized Farey fractions}\label{sct:gff}

Let us start by recalling some basics and conventions about the finite adeles. We thus have the group \(\mathbb{A}_{\mathrm{fin}}\coloneqq\prod_{p\text{ prime}}^{\prime}\left(\mathbb{Q}_{p},\mathbb{Z}_{p}\right)\) of finite adeles, and its compact open subgroup \(\mathbb{Z}_{\mathrm{fin}}\coloneqq\prod_{p\text{ prime}}\mathbb{Z}_{p}<\mathbb{A}_{\mathrm{fin}}\) of finite adelic integers. Consider also the adeles group, \(\mathbb{A}\coloneqq\mathbb{A}_{\mathrm{fin}}\times\mathbb{R}\). Let the Haar measure \(m_{\mathbb{A}_{\mathrm{fin}}}=\bigotimes_{p\text{ prime}}^{\prime}m_{\mathbb{Q}_{p}}\), where each \(m_{\mathbb{Q}_{p}}\) is normalized so that \(m_{\mathbb{Q}_{p}}\left(\mathbb{Z}_{p}\right)=1\), and let the Haar measure \(m_{\mathbb{A}}=m_{\mathbb{A}_{\mathrm{fin}}}\otimes m_{\mathbb{R}}\), for which \(m_{\mathbb{A}}\left(\mathbb{Z}_{\mathrm{fin}}\times\left[0,1\right)\right)=1\). Let \(\left\Vert\cdot\right\Vert_{\mathrm{fin}}\) be the adelic norm on \(\mathbb{A}_{\mathrm{fin}}\), given by
\[\left\Vert u\right\Vert_{\mathrm{fin}}=\prod\nolimits_{p\text{ prime}}\left|u_{p}\right|_{p},\quad u=\left(u_{p}\right)_{p\text{ prime}}\in\mathbb{A}_{\mathrm{fin}},\]
where each \(\left|\cdot\right|_{p}\) is the \(p\)-adic norm on \(\mathbb{Q}_{p}\).

The canonical lattice \(\mathbb{Q}<\mathbb{A}\) admits the fundamental domain \(\mathbb{Z}_{\mathrm{fin}}\times\left[0,1\right)\), and therefore \(\operatorname{covol}_{\mathbb{A}}\left(\mathbb{Q}\right)=1\). Consider the lattice \(\mathbb{Q}\cdot\left(u,t\right)<\mathbb{A}\) for some \(u=\left(u_{p}\right)_{p\text{ prime}}\in\mathbb{Z}_{\mathrm{fin}}\) and \(t\in\mathbb{R}\). It admits the fundamental domain \(\mathbb{Z}_{\mathrm{fin}}\cdot u\times\left[0,\left|t\right|\right)\), and therefore
\[\operatorname{covol}_{\mathbb{A}}\left(\mathbb{Q}\cdot\left(u,t\right)\right)=m_{\mathbb{A}_{\mathrm{fin}}}\big(\mathbb{Z}_{\mathrm{fin}}\cdot u\big)\cdot m_{\mathbb{R}}\left(\left[0,\left|t\right|\right)\right)=\left\Vert u\right\Vert_{\mathrm{fin}}\cdot\left|t\right|.\]

Let us now redefine and update the notations for the generalized Farey fractions.

\begin{defn}
The {\bf generalized Farey fractions} associated with a compact interval \(W\subset\mathbb{R}\), is the cut--and--project set, defined in the cut--and--project scheme \(\left(\mathbb{A}_{\mathrm{fin}},\mathbb{R};\mathbb{Q}\right)\), given by
\[\mathfrak{F}\left(W\right)\coloneqq\operatorname{proj}_{\mathbb{A}_{\mathrm{fin}}}\left(\mathbb{Q}\cap\left(\mathbb{A}_{\mathrm{fin}}\times W\right)\right)=\left\{ \iota\left(q\right):q\in\mathbb{Q}\cap W\right\}.\]
A {\bf dilated generalized Farey fractions} by a finite adelic integer \(u\in\mathbb{Z}_{\mathrm{fin}}\) takes the form
\[u\cdot\mathfrak{F}\left(W\right)=u\cdot\operatorname{proj}_{\mathbb{A}_{\mathrm{fin}}}\left(\mathbb{Q}\cap\left(\mathbb{A}_{\mathrm{fin}}\times W\right)\right)=\operatorname{proj}_{\mathbb{A}_{\mathrm{fin}}}\left(\mathbb{Q}\left(u,1\right)\cap\left(\mathbb{A}_{\mathrm{fin}}\times W\right)\right).\]
\end{defn}

Let us make the observation that for every \(u\in\mathbb{Z}_{\mathrm{fin}}\) and every compact interval \(W\subset\mathbb{R}\), it holds that
\begin{equation}\label{eq:twoendpts}
u\cdot\mathfrak{F}\left(W\right)-u\cdot\mathfrak{F}\left(W\right)=u\cdot\mathfrak{F}\left(W-W\right)\text{ with at most two exceptional points}.
\end{equation}
More precisely, \(u\cdot\mathfrak{F}\left(W\right)-u\cdot\mathfrak{F}\left(W\right)\subseteq u\cdot\mathfrak{F}\left(W-W\right)\), and \(u\cdot\mathfrak{F}\left(W-W\right)\backslash\left(u\cdot\mathfrak{F}\left(W\right)-u\cdot\mathfrak{F}\left(W\right)\right)\) has cardinality at most two, where those two exceptional points arise from the potentially-rational two endpoints of the interval \(W\).

We can now compute the formula \eqref{eq:densff} for the upper Banach density of a dilated generalized Farey fractions. Recalling that \(\operatorname{covol}_{\mathbb{A}}\left(\mathbb{Q}\left(u,1\right)\right)=\left\Vert u\right\Vert_{\mathrm{fin}}\), from Proposition~\ref{prop:denscap} (in the case \(H=\mathbb{R}\)) we obtain the first part of formula \eqref{eq:densff}:
\[d^{\ast}\left(u\cdot\mathfrak{F}\left(W\right)\right)=\operatorname{covol}_{\mathbb{A}_{\mathrm{fin}}}\left(\mathbb{Q}\left(u,1\right)\right)^{-1}\cdot m_{\mathbb{R}}\left(W\right)=\left\Vert u\right\Vert _{\mathrm{fin}}^{-1}\cdot m_{\mathbb{R}}\left(W\right).\]
As for the second part of the formula \eqref{eq:densff}, using the observation \eqref{eq:twoendpts} we have
\begin{align*}
d^{\ast}\left(u\cdot\mathfrak{F}\left(W\right)-u\cdot\mathfrak{F}\left(W\right)\right)
&=d^{\ast}\left(u\cdot\mathfrak{F}\left(W-W\right)\right)\\
&=\left\Vert u\right\Vert_{\mathrm{fin}}^{-1}\cdot m_{\mathbb{R}}\left(W-W\right)=2\cdot\left\Vert u\right\Vert_{\mathrm{fin}}^{-1}\cdot m_{\mathbb{R}}\left(W\right).
\end{align*}

\subsection{Final proof of Theorem~\ref{mthm:farey}}

The general inequality in Theorem~\ref{mthm:farey} follows from Proposition~\ref{prop:furst}, so we prove the second part. Suppose \(d^{\ast}\left(P-P\right)=2\cdot d^{\ast}\left(P\right)>0\). Pick \(\mu\) as in Proposition~\ref{prop:furst}, and then from \eqref{eq:furst} it follows that the transverse \(\mathbb{A}_{\mathrm{fin}}\)-space \(\left(\Omega_{P}^{\times},\mu,Y_{P}\right)\) satisfies \(I_{\mu}\left(Y_{P}\right)=2\cdot\iota_{\mu}\left(Y_{P}\right)\). Since \(\mathbb{A}_{\mathrm{fin}}\) is class \(\mathcal{Q}\), by Theorem~\ref{mthm:mineqext} there is a cut--and--project scheme \(\left(\mathbb{A}_{\mathrm{fin}},\mathbb{R};\Gamma\right)\) and a compact interval \(W_{o}\subset\mathbb{R}\), together with a transverse \(\mathbb{A}_{\mathrm{fin}}\)-factor map
\[\phi:\left(\Omega_{P}^{\times},\mu,Y_{P}\right)\to\left(\Xi,\xi,Y_{W_{o}}\right),\]
where \(\left(\Xi,\xi,Y_{W_{o}}\right)\) is the transverse cut--and--project \(\mathbb{A}_{\mathrm{fin}}\)-space associated with \(\left(\mathbb{A}_{\mathrm{fin}},\mathbb{R};\Gamma\right)\) and \(W_{o}\). By the general structure of lattices in \(\mathbb{A}\) (recall Example~\ref{exm:adeles2}), there exists \(\left(u,t\right)\in\mathbb{Z}_{\mathrm{fin}}\times\mathbb{R}\) such that \(\Gamma=\mathbb{Q}\left(u,t\right)\). Therefore, the return times set of a point \(\Gamma+\left(u^{\prime},t^{\prime}\right)\in\Xi\) takes the form
\begin{align*}
\left(Y_{W_{o}}\right)_{\Gamma+\left(u^{\prime},t^{\prime}\right)}
&\coloneqq\left\{u^{\prime\prime}\in\mathbb{A}_{\mathrm{fin}}:\exists w\in W_{o},\,\,\Gamma+\left(u^{\prime\prime},w\right)=\Gamma+\left(u^{\prime},t^{\prime}\right)\right\}\\
&=\operatorname{proj}_{\mathbb{A}_{\mathrm{fin}}}\left(\left(\left(u^{\prime},t^{\prime}\right)+\Gamma\right)\cap\left(\mathbb{A}_{\mathrm{fin}}\times W_{o}\right)\right)\\
&=\operatorname{proj}_{\mathbb{A}_{\mathrm{fin}}}\left(\left(\left(u^{\prime},t^{\prime}\right)+\mathbb{Q}\left(u,t\right)\right)\cap\left(\mathbb{A}_{\mathrm{fin}}\times W_{o}\right)\right)\\
&=u^{\prime}+\operatorname{proj}_{\mathbb{A}_{\mathrm{fin}}}\left(\mathbb{Q}\left(u,t\right)\cap\left(\mathbb{A}_{\mathrm{fin}}\times\left(W_{o}-t\right)\right)\right)\\
&=u^{\prime}+u\cdot\operatorname{proj}_{\mathbb{A}_{\mathrm{fin}}}\left(\mathbb{Q}\cap\left(\mathbb{A}_{\mathrm{fin}}\times t^{-1}\cdot\left(W_{o}-t\right)\right)\right)\\
&=u^{\prime}+u\cdot\mathfrak{F}\left(t^{-1}\cdot\left(W_{o}-t^{\prime}\right)\right),
\end{align*}
which is a translation of a dilated generalized Farey fractions. Pick any \(P^{\prime}\in Y_{P}\cap\phi^{-1}\left(Y_{W_{o}}\right)\) (recall this set is \(\mu_{Y_{P}}\)-conull), hence \(\phi\left(P^{\prime}\right)=\Gamma+\left(0_{\mathbb{A}_{\mathrm{fin}}},t_{P^{\prime}}\right)\) for some \(t_{P^{\prime}}\in W_{o}\). Since \(\phi\) is a transverse \(G\)-factor,
\[P^{\prime}=\left\{ u\in\mathbb{A}_{\mathrm{fin}}:e_{\mathbb{A}_{\mathrm{fin}}}\in P^{\prime}-u\right\}=\left(Y_{P}\right)_{P^{\prime}}=\left(Y_{W_{o}}\right)_{\Gamma+\left(0_{\mathbb{A}_{\mathrm{fin}}},t_{P^{\prime}}\right)}=u\cdot\mathfrak{F}\left(t^{-1}\cdot\left(W_{o}-t_{P^{\prime}}\right)\right).\]
Letting the interval \(W\coloneqq t^{-1}\cdot\left(W_{o}-t_{P^{\prime}}\right)\), by~\cite[Lemma 2.4]{BjFi2024} we obtain that
\[P-P\supseteq P^{\prime}-P^{\prime}=u\cdot\mathfrak{F}\left(W\right)-u\cdot\mathfrak{F}\left(W\right).\]
To show that this is a full density subset, on one hand we have
\begin{align*}
d^{\ast}\left(u\cdot\mathfrak{F}\left(W\right)-u\cdot\mathfrak{F}\left(W\right)\right)
&=d^{\ast}\left(u\cdot\mathfrak{F}\left(W-W\right)\right)=d^{\ast}\left(u\cdot\mathfrak{F}\left(t^{-1}\cdot\left(W_{o}-W_{o}\right)\right)\right)\\
&=\left\Vert u\right\Vert _{\mathrm{fin}}^{-1}\cdot\left|t\right|^{-1}\cdot m_{\mathbb{R}}\left(W_{o}-W_{o}\right)=2\cdot\left\Vert u\right\Vert _{\mathrm{fin}}^{-1}\cdot\left|t\right|^{-1}\cdot m_{\mathbb{R}}\left(W_{o}\right),
\end{align*}
where the first equality is by the observation \eqref{eq:twoendpts}; the third equality is by the general formula \eqref{eq:densff}; and, the last equality is because \(W_{o}\) is an interval. On the other hand,
\begin{align*}
d^{\ast}\left(P\right)	
&=\iota_{\mu}\left(Y_{P}\right)=\iota_{\xi}\left(Y_{W_{o}}\right)\\
&=\operatorname{covol}_{\mathbb{A}}\left(\mathbb{Q}\left(u,t\right)\right)^{-1}\cdot m_{\mathbb{R}}\left(W_{o}\right)=\left\Vert u\right\Vert _{\mathrm{fin}}^{-1}\cdot\left|t\right|^{-1}\cdot m_{\mathbb{R}}\left(W_{o}\right),
\end{align*}
where the first equality is by the construction of \(\mu\) as in Proposition~\ref{prop:furst}; the second equality is because intensity is preserved under transverse factors~\cite[Proposition 5.10]{AvBjCuI}, and here there is a transverse \(\mathbb{A}_{\mathrm{fin}}\)-factor \(\left(\Omega_{P}^{\times},\mu,Y_{P}\right)\to\left(\Xi,\xi,Y_{W_{o}}\right)\); and, the third equality is by Theorem~\ref{mthm:cap}. All together, we obtain
\[d^{\ast}\left(P-P\right)=2\cdot d^{\ast}\left(P\right)=2\cdot\left\Vert u\right\Vert _{\mathrm{fin}}^{-1}\cdot\left|t\right|^{-1}\cdot m_{\mathbb{R}}\left(W_{o}\right)=d^{\ast}\left(u\cdot\mathfrak{F}\left(W\right)-u\cdot\mathfrak{F}\left(W\right)\right).\]

\appendix

\section{Fundamentals of class \(\mathcal{Q}\) and cosolenoidal groups}\label{app:classcoss}

In this appendix, we present the classes of groups discussed in this work, namely class \(\mathcal{Q}\) groups and cosolenoidal groups, along with their structural properties, proved in full detail.

\subsection{Class \(\mathcal{Q}\)}

Recall that we have defined \hyperlink{classQ}{class \(\mathcal{Q}\)} as the class of lcsc abelian groups satisfying the following properties:
\begin{enumerate}
    \item The group is totally disconnected, non-discrete and non-compact.
    \item The (Pontryagin) dual of the group is torsion-free.\footnote{For instance, divisible group has torsion-free dual, but the converse is not always true; see~\cite[Theorem (24.23)]{HewittRoss}.}
\end{enumerate}

We will now formulate and prove Propositions~\ref{prop:classqgrp} and~\ref{prop:classqclass}.

\begin{prop}\label{prop:classqgrp-App}
For every \hyperlink{classQ}{class \(\mathcal{Q}\)} group \(G\) the following properties hold:
\begin{enumerate}
    \item \(G\) admits a local base at the identity consisting of compact open subgroups.
    \item \(G\) admits no closed cocompact proper subgroup (and in particular no lattices).
    \item Every compactification of \(G\) is connected.
\end{enumerate}
\end{prop}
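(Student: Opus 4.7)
The plan is to leverage Pontryagin duality throughout: the torsion-freeness of $\widehat{G}$ forces connectedness on any compact abelian group naturally associated with $G$, and combining this with the total disconnectedness of $G$ via van Dantzig's theorem will yield the contradictions needed for parts (2) and (3).

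Part~(1) I would dispatch immediately by invoking van Dantzig's theorem: every locally compact Hausdorff totally disconnected group admits a neighborhood basis at the identity consisting of compact open subgroups. Since $G$ is lcsc and totally disconnected, this applies directly.

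For part~(3), the plan is to show that the dual of a compactification $(K,\tau)$ is both discrete and torsion-free, which via the duality correspondence (compact connected $\leftrightarrow$ discrete torsion-free) forces $K$ to be connected. Density of $\tau(G)$ in $K$ dualizes to injectivity of $\widehat{\tau}: \widehat{K} \hookrightarrow \widehat{G}$, so $\widehat{K}$ inherits torsion-freeness from $\widehat{G}$; compactness of $K$ gives discreteness of $\widehat{K}$.

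For part~(2), I would argue by contradiction: assume $H < G$ is a closed proper cocompact subgroup, and let $\pi: G \to G/H$ denote the quotient map. The same dualization as in (3), now applied to $\widehat{G/H} \hookrightarrow \widehat{G}$ (the annihilator of $H$), shows that $G/H$ is a nontrivial compact connected abelian group. On the other hand, choosing a compact open subgroup $U < G$ via (1), the image $\pi(U)$ is an open subgroup of $G/H$; since a connected topological group admits no proper open subgroup (open subgroups are always clopen), $\pi(U) = G/H$. Compactness of $U$ then lets me identify $G/H$ with the topological-group quotient $U/(U\cap H)$ of the compact totally disconnected group $U$, and I would verify that such a quotient is itself totally disconnected — the images of compact open subgroups of $U$ descend to a local base of open subgroups at the identity in $G/H$, by van Dantzig applied inside $U$. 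Thus $G/H$ is simultaneously nontrivial, connected, and totally disconnected, a contradiction. The step most in need of care is this last verification that compact-group quotients of compact totally disconnected groups remain totally disconnected, though it ultimately reduces to a short open-subgroup-basis argument.
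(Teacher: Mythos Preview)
Your proof is correct and matches the paper's approach: van Dantzig for (1), the duality embedding \(\widehat{K}\hookrightarrow\widehat{G}\) for (3), and for (2) the observation that \(G/H\) is compact with torsion-free dual yet totally disconnected. The paper's (2) is organized slightly differently---it shows \(G/H\) totally disconnected directly (the images of the compact open subgroups of \(G\) already form a base at the identity in \(G/H\), so your detour through a single \(U\) and the quotient \(U/(U\cap H)\) is unnecessary) and then concludes \(\widehat{G/H}\) is torsion, hence trivial inside the torsion-free \(\widehat{G}\)---but the substance is the same.
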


\begin{proof}
Property (1) is van Dantzig's theorem; see~\cite[Theorem (7.7)]{HewittRoss}. For Property (2), if \(\Gamma<G\) is closed, cocompact proper subgroup, then \(\Gamma \backslash G\) is a compact group, which is totally disconnected because \(G\) has a basis of compact open subgroups, whose images in the quotient \(\Gamma\backslash G\) is a base at the identity of compact open subgroups. Under the Pontryagin duality, \(\widehat{\Gamma\backslash G}\cong\operatorname{Ann}_{G}\left(\Gamma\right)\) is the annihilator of \(\Gamma\) in \(\widehat{G}\), and it is a torsion group since \(\Gamma\backslash G\) is totally disconnected~\cite[Theorem (24.26)]{HewittRoss}. As \(\widehat{G}\) is torsion-free, \(\operatorname{Ann}_{G}\left(\Gamma\right)\) is trivial, hence \(\Gamma\) is trivial so that \(G\) is compact, which contradicts our assumption. For Property (3), recall that a compact abelian group \(K\) is connected if and only if its dual \(\widehat{K}\) is torsion-free~\cite[Theorem (24.25)]{HewittRoss}. Now if \(\left(K,\tau\right)\) is a compactification of a class \(\mathcal{Q}\) group \(G\) and \(\chi\in\widehat{K}\) is a torsion element, \(\chi\circ\tau\in\widehat{G}\) is also a torsion element which is nontrivial since \(\tau\left(G\right)\) is dense in \(K\), contradicting torsion-freeness of \(\widehat{G}\).
\end{proof}

\begin{prop}
\hyperlink{classQ}{Class \(\mathcal{Q}\)} is closed under restricted direct products.
\end{prop}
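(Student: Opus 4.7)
The plan is to verify that $G \coloneqq \prod\nolimits_{n\in\mathbb{N}}^{\prime}\left(G_{n},K_{n}\right)$ inherits each of the four defining conditions of class $\mathcal{Q}$ from the factors, where each $G_{n}$ is a class $\mathcal{Q}$ group and each $K_{n}<G_{n}$ is an arbitrary compact open subgroup. Throughout I will use that a local base at $e_{G}$ is given by the sets $\prod\nolimits_{n\in F}U_{n}\times\prod\nolimits_{n\in\mathbb{N}\backslash F}K_{n}$, with $F\subset\mathbb{N}$ finite and each $U_{n}\subseteq K_{n}$ open.

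First I would quickly dispatch the topological conditions. For total disconnectedness, Proposition~\ref{prop:classqgrp-App}(1) provides each $G_{n}$ with a local base at $e_{G_{n}}$ of compact open subgroups lying inside $K_{n}$, so one can arrange each $U_{n}$ above to be a compact open subgroup; the resulting basic set is then itself a compact open subgroup of $G$, yielding a local base at $e_{G}$ of compact open subgroups. For non-discreteness, each $G_{n}$ being non-discrete forces $K_{n}\neq\{e_{G_{n}}\}$, so any basic identity neighborhood contains nontrivial elements coming from the (infinitely many) coordinates outside the finite set $F$; hence $\{e_{G}\}$ is not open in $G$. For non-compactness, the coordinate projection $\pi_{n}:G\to G_{n}$ is continuous and surjective, so compactness of $G$ would propagate to each $G_{n}$, contradicting the hypothesis.

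The substantive step is torsion-freeness of the dual. I would invoke the standard Pontryagin duality for restricted direct products (see, e.g., \cite[Theorem (23.33)]{HewittRoss}), which yields a topological group isomorphism
\[
\widehat{G}\;\cong\;\prod\nolimits_{n\in\mathbb{N}}^{\prime}\bigl(\widehat{G_{n}},\,\operatorname{Ann}_{\widehat{G_{n}}}(K_{n})\bigr),
\]
where $\operatorname{Ann}_{\widehat{G_{n}}}(K_{n})=\{\chi\in\widehat{G_{n}}:\chi|_{K_{n}}\equiv 1\}$, which is a compact open subgroup of $\widehat{G_{n}}$ since $K_{n}$ is a compact open subgroup of $G_{n}$. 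Because the restricted direct product embeds into the full Cartesian product $\prod\nolimits_{n}\widehat{G_{n}}$, and a product of torsion-free abelian groups is torsion-free, the hypothesis that each $\widehat{G_{n}}$ is torsion-free yields that $\widehat{G}$ is torsion-free.

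The main obstacle I foresee is property (4): it hinges on the identification of $\widehat{G}$ as a restricted direct product of duals and on the correct identification of the distinguished compact open subgroup of each $\widehat{G_{n}}$ as the annihilator of $K_{n}$. Conditions (1)--(3) are essentially bookkeeping from the definition of the restricted product topology, while (4) is where the torsion-free dual hypothesis is actually used.
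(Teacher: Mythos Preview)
Your proof is correct and follows essentially the same approach as the paper: both arguments treat the topological conditions (totally disconnected, non-discrete, non-compact) as routine and then invoke the duality identification $\widehat{G}\cong\prod\nolimits_{n}^{\prime}\bigl(\widehat{G_{n}},\operatorname{Ann}(K_{n})\bigr)$ from \cite[(23.33)]{HewittRoss} to deduce torsion-freeness of the dual. The paper simply declares the first three properties ``easy to see'' whereas you spell them out, but the substance is identical.
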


\begin{proof}[Proof of Proposition~\ref{prop:classqclass}]
It is easy to see that taking restricted direct products preserves the properties of being totally disconnected, non-discrete and non-compact. For the torsion-freeness of the dual, one only needs that the dual group of a restricted direct product \(\prod\nolimits_{n\in\mathbb{N}}^{\prime}\left(G_{n},K_{n}\right)\) is given by the restricted direct product
\[\prod\nolimits _{n\in\mathbb{N}}^{\prime}\big(\widehat{G_{n}},\operatorname{Ann}_{G_{n}}\left(K_{n}\right)\big),\]
where in general \(\operatorname{Ann}_{G}\left(K\right)\coloneqq\{ \chi\in\widehat{G}:\chi\mid_{K}=1\}\) denotes the annihilator of \(K\) in \(\widehat{G}\)~\cite[(23.33)]{HewittRoss}.
\end{proof}

\subsection{Cosolenoidal groups}

Recall that we call an lcsc abelian group \(G\) \textbf{cosolenoidal} if \(G\times\mathbb{R}\) admits a lattice while \(G\) itself does not. More generally, \(G\) is \textbf{$d$-cosolenoidal}, for \(d\in\mathbb{N}\), if \(G \times \mathbb{R}^{d}\) admits a lattice but \(G \times \mathbb{R}^{d-1}\) does not. Let us formulate and prove Proposition~\ref{prop:cosoH}.

\begin{prop}\label{prop:cosoH-App}
Let \(G\) be an lcsc abelian group. If there exists an lcsc abelian group \(H\) such that \(G\times H\) admits a lattice, then \(G\) is \(d\)-cosolenoidal for some \(d\in\mathbb{N}\).
\end{prop}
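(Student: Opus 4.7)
My plan is to reduce, via the Pontryagin--van Kampen structure theorem together with a ``restrict-and-quotient'' manipulation of the given lattice, to the case where the second factor is already a Euclidean space. Writing \(H \cong \mathbb{R}^{a_H} \times H'\) for some integer \(a_H \geq 0\) and some lcsc abelian group \(H'\) admitting a compact open subgroup \(V \subset H'\), I will produce an explicit lattice in \(G \times \mathbb{R}^{a_H}\), thereby showing that \(G\) is \(d\)-cosolenoidal for some \(d \leq a_H\).

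The construction proceeds in two steps. First, I would consider the open subgroup \(A := G \times \mathbb{R}^{a_H} \times V\) of \(G \times H\) and set \(\Gamma_V := \Gamma \cap A\); I will show \(\Gamma_V\) is a lattice in \(A\). Discreteness is immediate. For cocompactness, the key observation is that \(\Gamma + A\) is itself an open subgroup of \(G \times H\) (being a union of translates of the open set \(A\)), and hence also closed. Letting \(\pi : G \times H \to (G \times H)/\Gamma\) denote the quotient map, it follows that \(\pi(A) = \pi(\Gamma + A)\) is a clopen subset of the compact space \((G \times H)/\Gamma\), hence compact. Since \(A\) is open and \(\pi\) is an open map, the induced continuous bijection \(A/\Gamma_V \to \pi(A)\) is open, hence a homeomorphism, so \(A/\Gamma_V\) is compact.

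Second, I would consider the continuous surjection \(q : A \to G \times \mathbb{R}^{a_H}\) obtained by quotienting out the compact kernel \(\{e_G\} \times \{0\} \times V\), and verify that \(q(\Gamma_V)\) is a lattice in \(G \times \mathbb{R}^{a_H}\). Discreteness follows because \(\Gamma_V \cap \ker q = \Gamma \cap V\) is finite (discrete intersected with compact), so for a sufficiently small identity neighborhood \(U \subset G \times \mathbb{R}^{a_H}\) one has \(\Gamma_V \cap q^{-1}(U) \subset \ker q\), forcing \(q(\Gamma_V) \cap U = \{0\}\). Cocompactness is immediate from the induced continuous surjection \(A/\Gamma_V \twoheadrightarrow (G \times \mathbb{R}^{a_H})/q(\Gamma_V)\): the source is compact by the first step, hence so is the target.

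The main subtle step is the cocompactness of \(\Gamma_V\) in the open (and a priori non-closed) subgroup \(A\). The standard obstacle, that the image of an open subset in a compact quotient may fail to be closed, is avoided here because in the abelian setting \(\Gamma + A\) is not merely a set but an open subgroup of \(G\times H\), and therefore automatically clopen.
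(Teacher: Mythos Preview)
Your proof is correct. Both you and the paper begin with the structure theorem \(H\cong\mathbb{R}^{a_H}\times H'\) with \(V<H'\) compact open, but you then proceed in the opposite order. The paper first quotients \(G\times H\) by the compact subgroup \(\{e_G\}\times\{0\}\times V\) (invoking a lemma that images of lattices under compact quotients are lattices), obtaining a lattice in \(G\times\mathbb{R}^{a_H}\times D\) with \(D=V\backslash H'\) discrete, and then intersects with \(G\times\mathbb{R}^{a_H}\times\{e_D\}\), proving cocompactness by an explicit fundamental-domain construction. You instead first intersect \(\Gamma\) with the open subgroup \(A=G\times\mathbb{R}^{a_H}\times V\), handling cocompactness via the observation that \(\Gamma+A\) is an open subgroup hence clopen, so that \(\pi(A)\) is a compact clopen subset of \((G\times H)/\Gamma\) homeomorphic to \(A/\Gamma_V\); only then do you quotient by the compact \(V\). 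Your clopen-subgroup argument replaces the paper's hands-on construction of a fundamental domain in its second lemma, and the two steps are fused rather than stated as separate lemmas; the net effect is a somewhat shorter and more conceptual route to the same conclusion.
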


In order to prove this proposition we will need the following two basic lemmas.

\begin{lem}\label{lem:latquot}
Let \(G\) be an lcsc abelian group, \(K<G\) be a compact subgroup, and let \(\Gamma<G\) be a lattice. Then \(K\backslash\Gamma<K\backslash G\) is a lattice.
\end{lem}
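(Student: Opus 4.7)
The plan is to consider the continuous open quotient homomorphism $\pi\colon G \to K\backslash G$ and verify the two defining properties of a lattice---discreteness and finite covolume---for $\pi(\Gamma) = K\Gamma/K$.

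First I would show that $K\Gamma$ is closed in $G$. Since $\Gamma$ is discrete (and thus closed) and $K$ is compact, the product $K\Gamma$ is closed in the Hausdorff topological group $G$; consequently $\pi(\Gamma)$ is closed in $K\backslash G$.

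Next, for the discreteness of $\pi(\Gamma)$, I would fix a compact symmetric identity neighborhood $L \subset G$, so that $KL$ is compact. Discreteness of $\Gamma$ then implies that $\Gamma \cap KL$ is a finite set, say $\{\gamma_{1},\dotsc,\gamma_{n}\}$. For each $\gamma_{i} \notin K$, the Hausdorff property applied to the compact set $K$ and the point $\gamma_{i}$ yields an open neighborhood $U_{i} \supseteq K$ with $\gamma_{i} \notin U_{i}$. Taking $U \coloneqq (KL^{o}) \cap \bigcap_{i} U_{i}$ produces an open neighborhood of $K$ satisfying $U \cap \Gamma \subseteq K$, and hence $\pi(U)$ is an open neighborhood of $e_{K\backslash G}$ meeting $\pi(\Gamma)$ only at the identity.

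Finally, for finite covolume, I would use the canonical identification of $G$-spaces
\[
(K\backslash G)/\pi(\Gamma) \;\cong\; K\Gamma\backslash G \;\cong\; (\Gamma\backslash G)\big/(K\Gamma/\Gamma),
\]
where $K\Gamma/\Gamma \cong K/(K\cap\Gamma)$ is a compact subgroup of the finite-measure $G$-space $\Gamma\backslash G$. Integrating out this compact subgroup (Weil integration) yields a finite $G$-invariant measure on $(K\backslash G)/\pi(\Gamma)$, showing that $\pi(\Gamma)$ has finite covolume. The only delicate step is the Hausdorff separation used to extract the open neighborhood $U$ in the discreteness argument; everything else is a routine application of standard facts about closed-set/compact-set products and quotient measures.
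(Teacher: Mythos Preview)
Your overall strategy is sound, but the discreteness step contains a small gap. Having $\gamma_{i}\notin U_{i}$ is not quite enough: the condition $\pi(\gamma)\in\pi(U)$ is equivalent to $\gamma\in KU$, not to $\gamma\in U$, so what you must verify is $\Gamma\cap KU\subseteq K$. With your choice of $U$ you do get $KU\subseteq KL^{o}\subseteq KL$, hence any $\gamma\in\Gamma\cap KU$ is one of the $\gamma_{i}$; but nothing you wrote excludes $\gamma_{i}\in KU_{i}$ when $\gamma_{i}\notin K$. The fix is immediate: separate the compact coset $K\gamma_{i}$ (rather than the point $\gamma_{i}$) from $K$ by an open $U_{i}\supseteq K$ with $U_{i}\cap K\gamma_{i}=\emptyset$; then $\gamma_{i}\in KU$ would give some $k^{-1}\gamma_{i}\in U\subseteq U_{i}$, contradicting $U_{i}\cap K\gamma_{i}=\emptyset$.

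For comparison, the paper argues discreteness sequentially: if $\pi(\gamma_{n})\to e$, lift to $k_{n}\gamma_{n}\to e$, pass to a subsequence with $k_{n}\to k$ by compactness of $K$, so $\gamma_{n}\to k^{-1}$ and discreteness of $\Gamma$ forces $\gamma_{n}$ eventually constant. This sidesteps the saturation issue entirely. For the second half the paper does not go through invariant measures at all: since lattices in lcsc abelian groups are automatically cocompact, it simply pushes a compact set $Q$ with $\Gamma Q=G$ forward to the compact $\pi(Q)$ with $\pi(\Gamma)\pi(Q)=K\backslash G$. Your measure-theoretic route via $(K\backslash G)/\pi(\Gamma)\cong(\Gamma\backslash G)/(K\Gamma/\Gamma)$ is correct and in fact more general (it would work even without cocompactness), but is heavier than what the situation requires.
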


\begin{proof}[Proof of Lemma~\ref{lem:latquot}]
We will show that \(K\backslash\Gamma<K\backslash G\) is discrete and cocompact. Let \(\pi:G\to K\backslash G\) be the quotient map. For the discreteness, assume \(\pi\left(\gamma_{n}\right)\to e_{K\backslash G}\) with \(\gamma_{n}\in\Gamma\).
Pick representatives \(g_{n}\coloneqq k_{n}\gamma_{n}\in K\gamma_{n}\) such that \(g_{n}\to e_{G}\). Since \(K\) is compact, after passing to a subsequence we may assume that \(k_{n}\to k\in K\). Then \(\gamma_{n}=g_{n}k_{n}^{-1}\to k^{-1}\). As \(\Gamma\) is discrete, \(\gamma_{n}\) is eventually constant, and hence so is \(\pi\left(\gamma_{n}\right)\). This shows that \(\Gamma_{K}\) is discrete. For the cocompactness, since \(\Gamma<G\) is cocompact, there is a compact \(Q\subseteq G\) with \(\Gamma Q=G\). Set \(Q_{o}\coloneqq \pi\left(Q\right)\), which is compact in \(K\backslash G\), and then
\[\left(K\backslash\Gamma\right)Q_{o}=\pi\left(\Gamma\right)\,\pi\left(Q\right)=\pi\left(\Gamma Q\right)=\pi\left(G\right)=K\backslash G.\qedhere\]
\end{proof}

\begin{lem}\label{lem:cosoH}
Let \(A\) be an lcsc abelian group, \(D\) a discrete countable abelian group, and \(\Gamma<A\times D\) a lattice. Then \(\operatorname{proj}_{A}\left(\Gamma\cap\left(A\times\left\{e_{D}\right\}\right)\right)<A\) is a lattice as well.
\end{lem}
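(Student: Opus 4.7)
The plan is to exploit the fact that $\Gamma$ projects onto its full image $\Delta := \operatorname{proj}_{D}(\Gamma) < D$, which will let me identify $(A\times\Delta)/\Gamma$ with $A/\operatorname{proj}_{A}(\Gamma_{0})$, where $\Gamma_{0} := \Gamma \cap (A \times \{e_{D}\})$.

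First I would note that $\Delta$ is discrete in $D$ (being a subgroup of a discrete group), so $A \times \Delta$ is a closed subgroup of $A \times D$ containing $\Gamma$, and $\Delta\backslash D$ is discrete. The natural continuous surjection $(A \times D)/\Gamma \to \Delta\backslash D$ has open and closed fibers; in particular, $(A \times \Delta)/\Gamma$ is a closed subset of the compact space $(A \times D)/\Gamma$, hence compact. Combined with discreteness of $\Gamma$ in $A\times\Delta$, this shows $\Gamma$ is a lattice in $A \times \Delta$.

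Next I would construct an isomorphism of topological groups $\Phi: A/\operatorname{proj}_{A}(\Gamma_{0}) \xrightarrow{\sim} (A\times\Delta)/\Gamma$, induced by the continuous composition $A \cong A\times\{e_{D}\} \hookrightarrow A\times\Delta \twoheadrightarrow (A\times\Delta)/\Gamma$. Surjectivity on $(A \times \Delta)/\Gamma$ follows from $\operatorname{proj}_{D}(\Gamma) = \Delta$: for any $(a,\delta) \in A\times\Delta$ pick $\gamma \in \Gamma$ with $\operatorname{proj}_{D}(\gamma)=\delta$, so $(a,\delta)-\gamma \in A\times\{e_{D}\}$. Injectivity follows because $a \in A$ lies in $\Gamma$ precisely when $(a,e_{D}) \in \Gamma_{0}$. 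Since $\Gamma_{0}$ is discrete (hence closed) in $A\times\{e_{D}\} \cong A$, both sides are lcsc, and by the open mapping theorem for Polish groups the continuous bijective homomorphism $\Phi$ is a topological isomorphism.

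Combining these two steps, $A/\operatorname{proj}_{A}(\Gamma_{0})$ is compact, while $\operatorname{proj}_{A}(\Gamma_{0})$ is discrete in $A$ (since $\operatorname{proj}_{A}|_{A\times\{e_{D}\}}$ is a homeomorphism and $\Gamma_{0}$ is discrete in $A \times D$); hence $\operatorname{proj}_{A}(\Gamma_{0})$ is a lattice in $A$. The main obstacle is the isomorphism step, which requires the open mapping theorem to upgrade a continuous algebraic bijection into a topological isomorphism; the rest of the argument is structural and uses only that $\Delta$ is discrete and that $\Gamma$ surjects onto it.
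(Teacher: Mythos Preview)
Your proof is correct and takes a genuinely different route from the paper's. The paper argues cocompactness of $\Gamma_{o}:=\operatorname{proj}_{A}(\Gamma_{0})$ by an explicit construction: from a compact fundamental set $Q\times F$ for $\Gamma$ in $A\times D$ (with $F\subset D$ finite), it picks for each $d\in F\cap\Delta$ a lift $\gamma_{d}=(a_{d},d)\in\Gamma$ and shows that the compact set $Q_{o}:=\bigcup_{d\in F\cap\Delta}(Q-a_{d})$ satisfies $\Gamma_{o}+Q_{o}=A$ by a direct coset computation. Your argument is instead structural: you identify $A/\Gamma_{o}$ with the fibre of $(A\times D)/\Gamma\to D/\Delta$ over the identity, which is closed in a compact space and hence compact. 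The paper's approach is entirely elementary and yields an explicit compact witness, while yours gives the cleaner group-theoretic picture $A/\Gamma_{o}\cong(A\times\Delta)/\Gamma$ at the cost of invoking the open mapping theorem. That invocation can in fact be avoided: since $\{e_{D}\}$ is open in the discrete group $\Delta$, the inclusion $A\cong A\times\{e_{D}\}\hookrightarrow A\times\Delta$ is open, so the composite $A\to(A\times\Delta)/\Gamma$ is already an open map and $\Phi$ is a homeomorphism without appeal to automatic continuity.
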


\begin{proof}[Proof of Lemma~\ref{lem:cosoH}]
Put
\[\Gamma_{o}\coloneqq\operatorname{proj}_{A}\left(\Gamma\cap\left(A\times\left\{e_{D}\right\}\right)\right).\]
Since \(\operatorname{proj}_{A}\) restricted to \(\Gamma\cap\left(A\times\{e_{D}\}\right)\) is a homeomorphism, \(\Gamma_{o}\) is discrete. Then we will prove that \(\Gamma_{o}<A\) is cocompact. Since \(\Gamma<A\times D\) is cocompact, there exist a compact set \(Q\subseteq A\) and a finite set \(F\subseteq D\) such that
\[\Gamma\left(Q\times F\right)=A\times D.\]
Let \(\Delta\coloneqq\operatorname{proj}_{D}\left(\Gamma\right)\leq D\). For every \(d\in F\cap\Delta\) choose an element \(\gamma_{d}=\left(a_{d},d\right)\in\Gamma\), and set
\[Q_{o}\coloneqq\bigcup\nolimits_{d\in F\cap\Delta}\left(Q-a_{d}\right)\subseteq A.\]
Then \(Q_{o}\) is compact, and we claim that \(\Gamma_{o}Q_{o}=A\). Indeed, fix an arbitrary \(a\in A\). By \(\Gamma\left(Q\times F\right)=A\times D\) there exist
\(\gamma=\left(b,\delta\right)\in\Gamma\) and \(\left(q,d\right)\in Q\times F\) with
\[\left(a,e_{D}\right)=\gamma\left(q,d\right)=\left(b+q,\delta d\right).\]
Then \(d=\delta^{-1}\in\Delta\), so \(d\in F\cap\Delta\) and the chosen
\(\gamma_{d}=\left(a_{d},d\right)\) exists. Multiply inside \(\Gamma\),
\[\gamma\gamma_{d}=\left(b,\delta\right)\left(a_{d},d\right)=\left(b+a_{d},e_{D}\right)\in\Gamma\cap\left(A\times\left\{e_{D}\right\}\right),\]
hence \(b+a_{d}\in\Gamma_{o}\). Since \(q\in Q\), we have \(q-a_{d}\in Q-a_{d}\subseteq Q_{o}\), and therefore
\[a=\left(b+a_{d}\right)+\left(q-a_{d}\right)\in\Gamma_{o}+Q_{o}.\]
We conclude that \(A=\Gamma_{o}Q_{o}\), thus \(\Gamma_{o}\) is cocompact in \(A\), and since it is discrete, it is a lattice.
\end{proof}

We can now deduce Proposition~\ref{prop:cosoH-App}:

\begin{proof}[Proof of Proposition~\ref{prop:cosoH-App}]
By the structure theorem of lcsc abelian groups~\cite[Theorem 24.30]{HewittRoss}, every lcsc abelian group \(H\) is isomorphic to \(\mathbb{R}^{d_{o}}\times H_{o}\) for some \(d_{o}\in\mathbb{Z}_{\geq 0}\), where \(H_{o}\) admits a compact open subgroup \(U_{o}\leq H_{o}\), and therefore \(D_{o}\coloneqq U_{o}\backslash H_{o}\) is a discrete countable abelian group. Letting \(K_{o}\coloneqq\{e_{G}\}\times\{0\}\times U_{o}\), it follows from Lemma~\ref{lem:latquot} that if \(\Gamma<G\times H\cong G\times\mathbb{R}^{d_{o}}\times H_{o}\) is a lattice, then also
\[K_{o}\backslash\Gamma\,<\,K_{o}\backslash\left(G\times\mathbb{R}^{d_{o}}\times H_{o}\right)\,\cong\,G\times\mathbb{R}^{d_{o}}\times D_{o}\]
is a lattice. By Lemma~\ref{lem:cosoH} (with \(A=G\times\mathbb{R}^{d_{o}}\) and \(D=D_{o}\)), \(G\) is \(d\)-cosolenoidal for some \(d\leq d_{o}\).
\end{proof}

Let us formulate and prove Proposition~\ref{prop:cyclicity}. Recall that a finitely generated group is {\bf \(d\)-generated}, for some \(d\in\mathbb{N}\), if it admits a generating set of cardinality at most \(d\).

\begin{prop}
Let \(G\) be a \hyperlink{classQ}{class \(\mathcal{Q}\)} group and \(d\in\mathbb{N}\) such that \(G\times\mathbb{R}^{d}\) admits a lattice. Then for all compact open subgroups \(V<U<G\), either \(U\) admits a nontrivial finite subgroup, or \(V\backslash U\) is \(d\)-generated.
\end{prop}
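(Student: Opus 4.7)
Since the first alternative of the dichotomy is immediate when \(U\) admits a nontrivial finite subgroup, I will assume that \(U\) has no such subgroups and prove that \(V\backslash U\) is \(d\)-generated. The strategy is to descend the ambient lattice \(\Gamma<G\times\mathbb{R}^d\) to a lattice \(\bar\Lambda\) in \((V\backslash U)\times\mathbb{R}^d\) that surjects onto the finite group \(V\backslash U\) with kernel a copy of \(\mathbb{Z}^d\), and then exploit the hypothesis on \(U\) to show \(\bar\Lambda\) is torsion-free. Once this is done, \(\bar\Lambda\) will be a finitely generated torsion-free abelian group of rank \(d\), hence \(\bar\Lambda\cong\mathbb{Z}^d\), and its quotient \(V\backslash U\) will inherit at most \(d\) generators.

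The first step is to apply Lemma~\ref{lem:latquot} to the compact subgroup \(V\times\{0\}<G\times\mathbb{R}^d\), yielding a lattice \(\bar\Gamma\) in \((V\backslash G)\times\mathbb{R}^d\), where \(V\backslash G\) is discrete since \(V\) is open in \(G\). The key observation, which is where the class \(\mathcal{Q}\) hypothesis enters, is that \(V\backslash G\) is \emph{divisible}: its compact dual \(\widehat{V\backslash G}\cong\operatorname{Ann}_{\widehat{G}}(V)\) is a closed subgroup of the torsion-free group \(\widehat{G}\), hence torsion-free, and a discrete abelian group with torsion-free (compact) dual is divisible. A standard compactness--discreteness argument shows that \((V\backslash G)/\operatorname{proj}_{V\backslash G}(\bar\Gamma)\) is finite---it is both a continuous image of the compact group \(((V\backslash G)\times\mathbb{R}^d)/\bar\Gamma\) and a quotient of the discrete group \(V\backslash G\)---so the divisibility of \(V\backslash G\) (which rules out proper finite-index subgroups) forces \(\operatorname{proj}_{V\backslash G}(\bar\Gamma)=V\backslash G\).

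The next step is to restrict to the clopen subgroup \((V\backslash U)\times\mathbb{R}^d\) and set \(\bar\Lambda\coloneqq\bar\Gamma\cap((V\backslash U)\times\mathbb{R}^d)\). This is a lattice in \((V\backslash U)\times\mathbb{R}^d\): discreteness is inherited, and cocompactness follows because the quotient embeds as a clopen subset of \(((V\backslash G)\times\mathbb{R}^d)/\bar\Gamma\). By the surjectivity just established, \(\operatorname{proj}_{V\backslash U}(\bar\Lambda)=V\backslash U\). Applying Lemma~\ref{lem:cosoH} with \(A=\mathbb{R}^d\) and \(D=V\backslash U\) identifies \(\bar\Lambda\cap(\{e\}\times\mathbb{R}^d)\) with a lattice in \(\mathbb{R}^d\). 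Putting these together yields the short exact sequence
\[
0\longrightarrow\mathbb{Z}^d\longrightarrow\bar\Lambda\longrightarrow V\backslash U\longrightarrow 0.
\]

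Finally, the hypothesis on \(U\) rules out torsion in \(\bar\Lambda\): any torsion element has trivial \(\mathbb{R}^d\)-component, so it lies in \(\bar\Lambda\cap((V\backslash U)\times\{0\})\), which is the image under the quotient by \(V\times\{0\}\) of \(\Gamma\cap(U\times\{0\})\); the latter is a discrete subgroup of the compact group \(U\times\{0\}\), hence a finite subgroup of \(U\), and thus trivial. Consequently \(\bar\Lambda\cong\mathbb{Z}^d\) and \(V\backslash U\) is \(d\)-generated as a quotient. I expect the main conceptual step to be the divisibility of \(V\backslash G\) and the resulting surjectivity of \(\bar\Gamma\) onto \(V\backslash G\); the rest of the argument is a structural consequence of the short exact sequence combined with Lemmas~\ref{lem:latquot} and~\ref{lem:cosoH}.
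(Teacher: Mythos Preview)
Your proof is correct and takes a genuinely different route from the paper's. The paper works on the \emph{compact} side: it shows first that \(\overline{\operatorname{proj}_G(\Gamma)}=G\) using the class~\(\mathcal{Q}\) property ``no proper closed cocompact subgroup'' (Proposition~\ref{prop:classqgrp}(2)), then restricts \(\Gamma\) to the compact open subgroup \(U\) to get a lattice \(\Gamma_U<U\times\mathbb{R}^d\) projecting densely to \(U\); after quotienting by the finite kernel \(F=\Gamma\cap(U\times\{0\})\), the projection to \(\mathbb{R}^d\) becomes injective with discrete image, so \(\Gamma_U/F\) embeds in a rank-\(d\) lattice and the isomorphism \(V\backslash U\cong (\Gamma_U\cap(V\times\mathbb{R}^d))\backslash\Gamma_U\) finishes (Lemmas~\ref{lem:cyclicity1} and~\ref{lem:cyclicity2}). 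You instead work on the \emph{discrete} side: you quotient by \(V\) first via Lemma~\ref{lem:latquot}, and exploit the class~\(\mathcal{Q}\) hypothesis through the divisibility of \(V\backslash G\) (equivalently, torsion-freeness of its compact dual \(\operatorname{Ann}_{\widehat G}(V)\subset\widehat G\)) to force the projection of \(\bar\Gamma\) onto \(V\backslash G\) to be surjective. From there you obtain the short exact sequence \(0\to\mathbb{Z}^d\to\bar\Lambda\to V\backslash U\to 0\) with \(\bar\Lambda\) torsion-free, whence \(\bar\Lambda\cong\mathbb{Z}^d\). Your approach bypasses Lemmas~\ref{lem:cyclicity1} and~\ref{lem:cyclicity2} entirely in favour of Lemmas~\ref{lem:latquot} and~\ref{lem:cosoH}, and the use of divisibility in place of the cocompact-subgroup criterion is a pleasant variation. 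One minor remark: your ``clopen subset'' argument for cocompactness of \(\bar\Lambda\) actually shows more---since \(\operatorname{proj}_{V\backslash G}(\bar\Gamma)=V\backslash G\), the map \(((V\backslash U)\times\mathbb{R}^d)/\bar\Lambda\to((V\backslash G)\times\mathbb{R}^d)/\bar\Gamma\) is a homeomorphism, not just a clopen embedding.
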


The proof of this proposition will be based on the following two lemmas.

\begin{lem}\label{lem:cyclicity1}
Let \(K\) be a compact abelian group, such that \(K\times\mathbb{R}^{d}\) admits a lattice for some \(d\in\mathbb{N}\) which projects densely to \(K\) and injectively to \(\mathbb{R}^{d}\). Then for every compact open subgroup \(L<K\), the finite group \(L\backslash K\) is \(d\)-generated.
\end{lem}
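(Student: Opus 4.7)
The plan is to identify $L\backslash K$ with a quotient of $\Gamma$, and then show that $\Gamma$ is abstractly isomorphic to $\mathbb{Z}^{d}$; since quotients of $\mathbb{Z}^{d}$ are $d$-generated, this yields the conclusion.

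First I would introduce the subgroup
\[\Gamma'\coloneqq\Gamma\cap\left(L\times\mathbb{R}^{d}\right)\]
and realize $L\backslash K\cong\Gamma/\Gamma'$. Since $L$ is open in $K$, the subgroup $L\times\mathbb{R}^{d}$ is open of finite index in $K\times\mathbb{R}^{d}$, so $\Gamma'$ is a finite-index subgroup of the lattice $\Gamma$. The map $\gamma\mapsto L\cdot\operatorname{proj}_{K}\left(\gamma\right)$ defines a homomorphism $\Gamma\to L\backslash K$ with kernel exactly $\Gamma'$; it is surjective because $\operatorname{proj}_{K}\left(\Gamma\right)$ is dense in $K$ and every $L$-coset is open. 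This gives $\Gamma/\Gamma'\cong L\backslash K$.

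Next I would show $\Gamma'\cong\mathbb{Z}^{d}$ as an abstract abelian group. Being a finite-index subgroup of $\Gamma$ contained in the open subgroup $L\times\mathbb{R}^{d}$, $\Gamma'$ is itself a lattice in $L\times\mathbb{R}^{d}$. The projection $\pi:L\times\mathbb{R}^{d}\to\mathbb{R}^{d}$ is a continuous surjective homomorphism with compact kernel $L$, hence sends the lattice $\Gamma'$ to a lattice in $\mathbb{R}^{d}$; by the injectivity hypothesis on $\operatorname{proj}_{\mathbb{R}^{d}}|_{\Gamma}$, the restriction $\pi|_{\Gamma'}$ is an isomorphism onto its image, and so $\Gamma'\cong\mathbb{Z}^{d}$.

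From here I would upgrade this to $\Gamma\cong\mathbb{Z}^{d}$. The injectivity of $\operatorname{proj}_{\mathbb{R}^{d}}$ on $\Gamma$ embeds $\Gamma$ into the torsion-free group $\mathbb{R}^{d}$, so $\Gamma$ is torsion-free, and it is finitely generated since it contains the finitely generated $\Gamma'\cong\mathbb{Z}^{d}$ of finite index. A finitely generated torsion-free abelian group containing $\mathbb{Z}^{d}$ with finite index is necessarily isomorphic to $\mathbb{Z}^{d}$. Then $L\backslash K\cong\Gamma/\Gamma'$ is a quotient of $\mathbb{Z}^{d}$, hence $d$-generated.

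I do not foresee a serious obstacle. The only point requiring care is the identification $\Gamma/\Gamma'\cong L\backslash K$, where one must use both the density of $\operatorname{proj}_{K}\left(\Gamma\right)$ and the openness of $L$; and the standard fact that an open continuous surjective homomorphism with compact kernel sends lattices to lattices, which justifies passing from the lattice $\Gamma'<L\times\mathbb{R}^{d}$ to the lattice $\pi\left(\Gamma'\right)<\mathbb{R}^{d}$.
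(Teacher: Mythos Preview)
Your proposal is correct and follows essentially the same strategy as the paper: identify $L\backslash K$ with $\Gamma/\Gamma'$ via the dense projection, then show $\Gamma$ is $d$-generated by projecting to $\mathbb{R}^{d}$. The only difference is that the paper argues directly that $\operatorname{proj}_{\mathbb{R}^{d}}(\Gamma)$ is a discrete subgroup of $\mathbb{R}^{d}$ (using compactness of $K$ to pass to convergent subsequences), whereas you take a small detour through $\Gamma'$ and the ``projection with compact kernel sends lattices to lattices'' fact before upgrading algebraically to $\Gamma\cong\mathbb{Z}^{d}$; this detour is harmless but unnecessary, since the same standard fact applied to $K\times\mathbb{R}^{d}\to\mathbb{R}^{d}$ already gives the conclusion for $\Gamma$ itself.
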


\begin{proof}
Let \(\Lambda<K\times\mathbb{R}\) be a lattice such that \(\operatorname{proj}_{K}\left(\Lambda\right)\) is dense in \(K\) and the map \(\operatorname{proj}_{\mathbb{R}^{d}}\mid_{\Gamma}\) is injective. For an open subgroup \(L<K\), put \(\Lambda_{L}\coloneqq\Lambda\cap\left(L\times\mathbb{R}^{d}\right)\), and we have the isomorphisms of finite groups
\begin{equation}\label{eq:isoms}
L\backslash K\cong\Lambda_{L}\backslash\Lambda.
\end{equation}
Indeed, look at the homomorphism \(\varphi:\Lambda\to K\to L\backslash K\), \(\varphi:\left(k,t\right)\mapsto k \mapsto k+L\). The kernel of \(\varphi\) is \(\Lambda_{L}\) and, since \(\mathrm{proj}_{K}\left(\Lambda\right)\) is dense in \(K\) while \(L\backslash K\) is finite, \(\varphi\) is surjective, justifying \eqref{eq:isoms}. Since a quotient of a \(d\)-generated group is \(d\)-generated, and in light of \eqref{eq:isoms}, in order to deduce that \(L\backslash K\) is \(d\)-generated it suffices to show that \(\Lambda\) is \(d\)-generated. To this end we claim that \(\operatorname{proj}_{\mathbb{R}^{d}}\left(\Lambda\right)\) is a nontrivial discrete subgroup of \(\mathbb{R}^{d}\), hence is a lattice in \(\mathbb{R}^{d}\) and in particular \(d\)-generated. The nontriviality follows from the injectivity of \(\operatorname{proj}_{\mathbb{R}^{d}}\mid_{\Lambda}\). To prove the discreteness, it suffices to show that every sequence in \(\operatorname{proj}_{\mathbb{R}^{d}}\left(\Lambda\right)\) converging to \(0\), admits a subsequence which is constant \(0\). Indeed, let \(\left(t_{n}\right)=\left(\operatorname{proj}_{\mathbb{R}^{d}}\left(k_{n},t_{n}\right)\right)\subset\operatorname{proj}_{\mathbb{R}^{d}}\left(\Lambda\right)\) be a sequence converging to \(0\), and since \(K\) is compact, by passing to a subsequence we may assume that \(\left(k_{n}\right)\) converges to some \(k\in K\), and thus \(\left(k_{n},t_{n}\right)\) converges to \(\left(k,0\right)\in\Lambda\). Since \(\Lambda\) is discrete, for all sufficiently large \(n\) we have \(\left(k_{n},t_{n}\right)=\left(k,0\right)\), hence \(t_{n}=0\).
\end{proof}

\begin{lem}\label{lem:cyclicity2}
Let \(G\) be an lcsc abelian group such that \(G\times\mathbb{R}^{d}\) admits a lattice for some \(d\in\mathbb{N}\). Then there exists a cocompact closed subgroup \(G_{o}\leq G\) (which is in fact \(\overline{\operatorname{proj}_{G}\left(\Gamma\right)}\)) with the following property: for every compact open subgroup \(U<G_{o}\), there is a finite group \(F<U\), such that for every compact open subgroup \(V<F\backslash U\), the finite group \(V\backslash\left(F\backslash U\right)\) is \(d\)-generated.
\end{lem}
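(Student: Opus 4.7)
The plan is to apply Lemma~\ref{lem:cyclicity1} not to $U$ directly, but to the quotient $F\backslash U$ for a suitable finite subgroup $F<U$ extracted from $\Gamma$. To set the stage, I would first check that $G_{o}\coloneqq\overline{\operatorname{proj}_{G}(\Gamma)}$ is cocompact in $G$, which is immediate since $\operatorname{proj}_{G}(\Gamma)$ is the projection of the cocompact set $\Gamma$. Next, I would upgrade $\Gamma$ to a lattice inside $G_{o}\times\mathbb{R}^{d}$: since $\Gamma\subseteq G_{o}\times\mathbb{R}^{d}$ and the latter is a closed $\Gamma$-invariant subgroup of $G\times\mathbb{R}^{d}$ whose ambient quotient is $G/G_{o}$, the image $(G_{o}\times\mathbb{R}^{d})/\Gamma$ is closed inside the compact group $(G\times\mathbb{R}^{d})/\Gamma$, hence compact. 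So $\Gamma$ is a lattice in $G_{o}\times\mathbb{R}^{d}$ with dense projection to $G_{o}$.

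Given a compact open subgroup $U<G_{o}$, I would then set $\Gamma_{U}\coloneqq\Gamma\cap(U\times\mathbb{R}^{d})$ and show it is a lattice in $U\times\mathbb{R}^{d}$. Discreteness is inherited. For cocompactness, the key observation is that $\operatorname{proj}_{G_{o}}(\Gamma)+U$ is an open subgroup of $G_{o}$ containing the dense set $\operatorname{proj}_{G_{o}}(\Gamma)$, and hence equals $G_{o}$; using additionally that $U$ is a subgroup, one verifies that the natural map $\Gamma_{U}\backslash(U\times\mathbb{R}^{d})\to\Gamma\backslash(G_{o}\times\mathbb{R}^{d})$ is a homeomorphism.

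The candidate finite group will be
\[F\coloneqq\operatorname{proj}_{G}\bigl(\Gamma\cap(U\times\{0\})\bigr)\subset U.\]
Finiteness follows because $\Gamma\cap(G\times\{0\})$ is discrete in $G$ while $U$ is compact. Then $F$ sits inside $\Gamma_{U}$ via $f\mapsto(f,0)$, and $\Gamma_{U}/F$ is a lattice in the quotient $(F\backslash U)\times\mathbb{R}^{d}$. The two hypotheses of Lemma~\ref{lem:cyclicity1} hold essentially by construction: the projection of $\Gamma_{U}/F$ to $\mathbb{R}^{d}$ is injective, because any $(u,0)\in\Gamma_{U}$ satisfies $u\in F$ by the definition of $F$; and the projection to $F\backslash U$ is dense, because $\operatorname{proj}_{G_{o}}(\Gamma)\cap U$ is dense in $U$ (openness of $U$ combined with density of $\operatorname{proj}_{G_{o}}(\Gamma)$) and $U\to F\backslash U$ is an open map. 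Applying Lemma~\ref{lem:cyclicity1} with $K\coloneqq F\backslash U$ then delivers the conclusion.

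The main obstacle will be the cocompactness step: a priori the map $\Gamma_{U}\backslash(U\times\mathbb{R}^{d})\hookrightarrow\Gamma\backslash(G_{o}\times\mathbb{R}^{d})$ only gives an open subset of the quotient, and forcing it to be surjective (hence compact) requires exactly the interplay between openness of $U$ and the density $\overline{\operatorname{proj}_{G}(\Gamma)}=G_{o}$. Once this lattice property of $\Gamma_{U}$ is secured, the role of $F$ is purely to absorb the finite kernel of $\Gamma_{U}\to\mathbb{R}^{d}$, and the remainder of the argument is routine bookkeeping around the quotient by $F$.
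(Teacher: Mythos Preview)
Your proposal is correct and follows essentially the same route as the paper: define $G_{o}=\overline{\operatorname{proj}_{G}(\Gamma)}$, pass to $\Gamma_{U}=\Gamma\cap(U\times\mathbb{R}^{d})$, kill the finite kernel $F=\Gamma\cap(U\times\{0\})$ of $\operatorname{proj}_{\mathbb{R}^{d}}|_{\Gamma_{U}}$, and apply Lemma~\ref{lem:cyclicity1} to $K=F\backslash U$. In fact you supply more justification than the paper does: the paper simply asserts that $p_{o}(\Gamma_{U})$ is a lattice in $(F\backslash U)\times\mathbb{R}^{d}$, whereas you explain why $\Gamma_{U}$ is cocompact in $U\times\mathbb{R}^{d}$ via the bijection $\Gamma_{U}\backslash(U\times\mathbb{R}^{d})\to\Gamma\backslash(G_{o}\times\mathbb{R}^{d})$ (note that to conclude it is a homeomorphism you should also observe the map is open, which follows since $U\times\mathbb{R}^{d}$ is open in $G_{o}\times\mathbb{R}^{d}$ and quotient maps by subgroups are open).
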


\begin{proof}
Let \(\Gamma<G\times\mathbb{R}^{d}\) be a lattice and define \(G_{o}\coloneqq\overline{\operatorname{proj}_{G}\left(\Gamma\right)}\). Form the map
\[\Gamma\backslash\left(G\times\mathbb{R}^{d}\right)\to G_{o}\backslash G,\quad\Gamma\left(g,t\right)\mapsto G_{o}g,\]
which is well-defined for if \(\Gamma\left(g,t\right)=\Gamma\left(g^{\prime},t^{\prime}\right)\) then \(g^{\prime}=\gamma_{G}g\) with \(\gamma_{G}\in\operatorname{proj}_{G}\left(\Gamma\right)<G_{o}\), so \(G_{o}g^{\prime}=G_{o}g\). Since this map is a continuous surjection and \(\Gamma\backslash\left(G\times\mathbb{R}^{d}\right)\) is compact, \(G_{o}\backslash G\) is compact as well, and therefore \(G_{o}<G\) is cocompact. Let us then show that \(G_{o}\) satisfies the desired property.

Let \(U<G_{o}\) be a compact open subgroup, and put \(\Gamma_{U}\coloneqq\Gamma\cap\left(U\times\mathbb{R}^{d}\right)\). Since \(\operatorname{proj}_{G_{o}}\left(\Gamma\right)\) is dense in \(G_{o}\) and \(U\) is open, it follows that \(\operatorname{proj}_{G_{o}}\left(\Gamma_{U}\right)\) is dense in \(U\). While \(\operatorname{proj}_{\mathbb{R}^{d}}\mid_{\Gamma_{U}}\) may not be injective, \(F\coloneqq\ker\operatorname{proj}_{\mathbb{R}^{d}}\mid_{\Gamma_{U}}=\Gamma\cap\left(U\times\left\{0\right\} \right)<U\) is a discrete subgroup of \(U\times\left\{ 0\right\}\), and since \(U\) is compact, \(F\) is finite. Put the compact group \(U_{o}\coloneqq F\backslash U\) and consider the lattice \(\Gamma_{o}\coloneqq p_{o}\left(\Gamma_{U}\right)<U_{o}\times\mathbb{R}^{d}\), where \(p_{o}:U\times\mathbb{R}^{d}\to U_{o}\times\mathbb{R}^{d}\) is the map \(p_{o}\left(u,t\right)=\left(Fu,t\right)\). Altogether, we obtain the desired conclusion from Lemma~\ref{lem:cyclicity1} applied to \(K=U_{o}\) and \(\Gamma_{o}\) and with any compact open subgroup \(L=V<K=U_{o}\).
\end{proof}

We can now prove the above proposition:

\begin{proof}
Let \(\Gamma<G\times\mathbb{R}^{d}\) be a lattice. Since \(G\) is class \(\mathcal{Q}\), by Proposition~\ref{prop:classqgrp-App} necessarily \(G_{o}=G\), where \(G_{o}\) is as in Lemma~\ref{lem:cyclicity2}. Let \(V<U<G\) be compact open subgroups, and assume that \(U\) admits no finite subgroups. Then necessarily \(F\) is trivial, where \(F\) is as in Lemma~\ref{lem:cyclicity2}, and therefore by the conclusion of Lemma~\ref{lem:cyclicity2}, \(V\backslash U\) is \(d\)-generated.
\end{proof}

Next, we formulate and prove Proposition~\ref{prop:adlat}:

\begin{prop}
Let \(G\) be a \hyperlink{classQ}{class \(\mathcal{Q}\)} group satisfying the following properties:
\begin{enumerate}
    \item \(G\) is torsion-free and divisible.
    \item There is a compact open subgroup \(U<G\) with no nontrivial finite subgroups, and \(U\backslash G\) is torsion.
\end{enumerate}
Let \(Q\) be the set of orders of elements in \(U\backslash G\). Then every lattice \(\Gamma< G\times\mathbb{R}\) is of the form
\[\Gamma=\mathbb{Z}\big[1/Q\big]\left(u,t\right)=\big\{ \big({\textstyle \frac{m}{q}}u,{\textstyle\frac{m}{q}}t\big):m\in\mathbb{Z},q\in Q\big\}<G\times\mathbb{R},\footnote{The writing \(g/q\) for \(g\in G\) and \(q\in\mathbb{Z}\backslash\{0\}\) is justified since \(G\) is divisible and torsion free.}\]
for some \(\left(u,t\right)\in U\times\mathbb{R}\).
\end{prop}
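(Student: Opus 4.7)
The overall strategy is to (i) isolate a single ``axis generator'' $(u,t_{0})\in U\times\mathbb{R}_{>0}$ of the part of $\Gamma$ sitting in $U\times\mathbb{R}$ using the two lemmas already in the appendix, (ii) leverage the hypotheses on $U$ and $G$ to prove that $\operatorname{proj}_{\mathbb{R}}|_{\Gamma}$ is injective, and (iii) identify $\operatorname{proj}_{\mathbb{R}}(\Gamma)$ with $\mathbb{Z}[1/Q]\cdot t_{0}$ by going back and forth between torsion elements of $U\backslash G$ and fractional multiples of $(u,t_{0})$.

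To extract $(u,t_{0})$, I would set $K\coloneqq U\times\{0\}$ and apply Lemma~\ref{lem:latquot}, so that $K\backslash\Gamma$ is a lattice in $K\backslash(G\times\mathbb{R})\cong (U\backslash G)\times\mathbb{R}$. Since $U\backslash G$ is discrete (countable and torsion), Lemma~\ref{lem:cosoH} with $A=\mathbb{R}$ and $D=U\backslash G$ then yields that $\operatorname{proj}_{\mathbb{R}}\!\left(\Gamma\cap (U\times\mathbb{R})\right)$ is a lattice in $\mathbb{R}$, hence equals $t_{0}\mathbb{Z}$ for some $t_{0}>0$. Fix any $(u,t_{0})\in\Gamma\cap(U\times\mathbb{R})$.

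For injectivity of $\operatorname{proj}_{\mathbb{R}}|_{\Gamma}$, I would first note that $\Gamma\cap(U\times\{0\})$ is a discrete subgroup of the compact group $U$, hence finite, and therefore trivial by hypothesis~(2). This upgrades to $\Gamma\cap(U\times\mathbb{R})=\mathbb{Z}(u,t_{0})$ (the $\mathbb{R}$-projection being injective on this set). Next, if $(g,0)\in\Gamma$, then since $U\backslash G$ is torsion, $qg\in U$ for some $q\in Q$, yielding $(qg,0)\in\Gamma\cap(U\times\{0\})=\{0\}$; torsion-freeness of $G$ then forces $g=0$. Hence $\Gamma\cap(G\times\{0\})=\{0\}$, giving the injectivity.

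For the forward inclusion $\Gamma\subseteq\mathbb{Z}[1/Q](u,t_{0})$: given $(g,t)\in\Gamma$, pick $q\in Q$ with $qg\in U$; then $(qg,qt)\in\mathbb{Z}(u,t_{0})$, so $(qg,qt)=m(u,t_{0})$ for some $m\in\mathbb{Z}$, and divisibility plus torsion-freeness of $G$ give $(g,t)=(m/q)(u,t_{0})$. For the reverse inclusion, I would use that $\operatorname{proj}_{G}(\Gamma)$ is dense in $G$ (its closure is cocompact, and a class~$\mathcal{Q}$ group has no proper closed cocompact subgroups by Proposition~\ref{prop:classqgrp}). Given $q\in Q$, choose $d\in U\backslash G$ of order \emph{exactly} $q$ and pick $(g_{q},t_{q})\in\Gamma$ with $Ug_{q}=d$. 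By the forward inclusion, $(g_{q},t_{q})=(n/q)(u,t_{0})$ for some integer $n$, and the order-$q$ condition on $d$ (i.e.\ $\tfrac{q}{p}g_{q}\notin U$ for every prime $p\mid q$) forces $\gcd(n,q)=1$. A Bézout step $an+bq=1$ then gives $a(g_{q},t_{q})+b(u,t_{0})=(1/q)(u,t_{0})\in\Gamma$, so $\mathbb{Z}[1/Q](u,t_{0})\subseteq\Gamma$. The one delicate point in the argument is this coprimality $\gcd(n,q)=1$: this is where the choice of $d$ of \emph{exact} order $q$ (as opposed to merely $qd=0$) is essential, and without it we would only get a divisor of $q$ in the denominator.
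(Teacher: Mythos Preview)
Your proof is correct and follows the same overall architecture as the paper: isolate $\Gamma_{U}\coloneqq\Gamma\cap(U\times\mathbb{R})$ as $\mathbb{Z}(u,t_{0})$, then use the torsion hypothesis on $U\backslash G$ together with a B\'ezout argument to obtain the $\mathbb{Z}[1/Q]$-structure. Two sub-steps are executed differently. First, for the discreteness of $\operatorname{proj}_{\mathbb{R}}(\Gamma_{U})$, the paper gives a direct compactness argument (extract a convergent subsequence in $U$), whereas you recycle Lemmas~\ref{lem:latquot} and~\ref{lem:cosoH}; this is a nice reuse of existing machinery. Second, for the surjectivity of $\operatorname{proj}_{G}(\Gamma)\to U\backslash G$, the paper argues that $U\operatorname{proj}_{G}(\Gamma)$ has finite index in $G$ and then invokes ``torsion-free $+$ divisible $\Rightarrow$ no proper finite-index subgroups'', while you observe that $\overline{\operatorname{proj}_{G}(\Gamma)}$ is closed and cocompact and invoke Proposition~\ref{prop:classqgrp}(2); your route is slightly cleaner and uses the class~$\mathcal{Q}$ hypothesis directly rather than the specific algebraic hypotheses~(1). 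One small slip: your parenthetical ``(countable and torsion)'' is not the reason $U\backslash G$ is discrete---it is discrete because $U$ is open.
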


\begin{proof}
Let \(\Gamma<G\times\mathbb{R}\) be a lattice and put \(\Gamma_{U}\coloneqq\Gamma\cap\left(U\times\mathbb{R}\right)\). We claim that the projection map \(\operatorname{proj}_{\mathbb{R}}:\Gamma_{U}\to\operatorname{proj}_{\mathbb{R}}\left(\Gamma_{U}\right)\) forms an isomorphism of \(\Gamma_{U}\) onto a discrete subgroup of \(\mathbb{R}\). Indeed, \(\ker\operatorname{proj}_{\mathbb{R}}\mid_{\Gamma_{U}}=\Gamma\cap\left(U\times\left\{ 0\right\} \right)\) is a finite subgroup of \(U\) (since \(\Gamma\) is discrete and \(U\) is compact), and since \(U\) admits no nontrivial finite subgroups this kernel is trivial. We have left to show that \(\mathrm{proj}_{\mathbb{R}}\left(\Gamma_{U}\right)\) is a discrete subgroup of \(\mathbb{R}\). Suppose \(\left(t_{n}\right)_{n\in\mathbb{N}}\subset\mathrm{proj}_{\mathbb{R}}\left(\Gamma_{U}\right)\) is a sequence with \(t_{n}\to0\) as \(n\to\infty\). Then there exists a sequence \(\left(u_{n}\right)_{n\in\mathbb{N}}\subset U\) such that \(\gamma_{n}\coloneqq\left(u_{n},t_{n}\right)\in\Gamma\) for every \(n\in\mathbb{N}\), and since \(U\) is compact, we may assume (after passing to a subsequence) that \(u_{n}\to u\) as \(n\to\infty\) for some \(u\in U\), and consequently \(\gamma_{n}\to\left(u,0\right)\) as \(n\to\infty\). Because \(\Gamma\) is discrete and \(\left(\gamma_{n}\right)_{n\in\mathbb{N}}\subset\Gamma\), it follows that \(\gamma_{n}=\left(u,0\right)\) for all sufficiently large \(n\), so \(t_{n}=0\) for all sufficiently large \(n\). Thus every sequence in \(\mathrm{proj}_{\mathbb{R}}\left(\Gamma_{U}\right)\) that converges to \(0\) is eventually constant, hence \(\mathrm{proj}_{\mathbb{R}}\left(\Gamma_{U}\right)\) is a discrete subgroup of \(\mathbb{R}\).

We conclude that \(\mathrm{proj}_{\mathbb{R}}\mid_{\Gamma_{U}}:\Gamma_{U}\to\mathrm{proj}_{\mathbb{R}}\left(\Gamma_{U}\right)\) is an isomorphism of discrete groups. Hence, by the structure of discrete subgroups of \(\mathbb{R}\), there is \(\left(u,t\right)\in U\times\mathbb{R}\) such that
\[
\Gamma_{U}=\mathbb{Z}\left(u,t\right)=\bigl\{(mu,mt):m\in\mathbb{Z}\bigr\}.
\]
We now claim that the following identity holds:
\begin{equation}\label{eq:gammaZQ}
\Gamma=\mathbb{Z}\big[1/Q^{\prime}\big]\left(u,t\right),
\end{equation}
where \(Q^{\prime}\) denotes the set of orders of elements in \(\left\{Ug:g\in\mathrm{proj}_{G}\left(\Gamma\right)\right\}\subseteq U\backslash G\). Let \((\gamma_{1},\gamma_{2})\in\Gamma\) be arbitrary. Because \(U\backslash G\) is torsion, there is \(q\in Q^{\prime}\) with \(q\gamma_{1}\in U\) (note \(\gamma_{1}\in\mathrm{proj}_{G}\left(\Gamma\right)\)), and then
\[\left(q\gamma_{1},q\gamma_{2}\right)\in\Gamma_{U}=\mathbb{Z}\left(u,t\right),\]
so \(\left(q\gamma_{1},q\gamma_{2}\right)=\left(mu,mt\right)\) for some \(m\in\mathbb{Z}\), and using the divisibility of \(G\) we get
\[\left(\gamma_{1},\gamma_{2}\right)=\frac{m}{q}\left(u,t\right)\in\mathbb{Z}\big[1/q\big]\left(u,t\right)\subseteq\mathbb{Z}\big[1/Q^{\prime}\big]\left(u,t\right).\]
Conversely, let \(q\in Q^{\prime}\) be arbitrary and pick \(g\in\mathrm{proj}_{G}\left(\Gamma\right)\) such that \(Ug\in U\backslash G\) has order \(q\), thus \(qg\in U\). Pick some \(s\in\mathbb{R}\) with \(\left(g,s\right)\in\Gamma\), and as above we may write
\[\left(qg,qs\right)=m\left(u,t\right)\quad\text{for some }m\in\mathbb{Z},\]
so \(qg=mu\in U\). The minimality of \(q\) forces \(\gcd\left(m,q\right)=1\) (if \(d\coloneqq\gcd\left(m,q\right)>1\) then from \(qg=mu\) we get \(\left(q/d\right)g=\left(m/d\right)u\in U\), contradicting the minimality of \(q\)). Then pick \(a,b\in\mathbb{Z}\) with \(am+bq=1\) (B\'{e}zout's identity), and using the divisibility of \(G\) we get
\[\frac{1}{q}\left(u,t\right)=a\left(g,s\right)+b\left(u,t\right)\in\Gamma,\]
so each generator \(\frac{1}{q}\left(u,t\right)\in\mathbb{Z}\left[1/Q^{\prime}\right]\left(u,t\right)\) lies in \(\Gamma\). This establishes \eqref{eq:gammaZQ}.

Finally, let \(Q\) be the set of orders of elements in \(U\backslash G\), and we will prove that \(Q=Q^{\prime}\). The inclusion \(Q^{\prime}\subseteq Q\) is clear, and for the reverse inclusion it suffices to show that \(U\mathrm{proj}_{G}\left(\Gamma\right)=G\). Since \(\Gamma\) is a lattice in the abelian group \(G\times\mathbb{R}\), there is a compact set \(C\subset G\times\mathbb{R}\) with \(\Gamma C=G\times\mathbb{R}\), hence
\[G=\mathrm{proj}_{G}\left(\Gamma\right)K,\text{ where }K\coloneqq\mathrm{proj}_{G}\left(C\right).\]
Since \(U\) is compact open and \(K\) is compact, there is a finite set \(F\subseteq K\) such that \(UK=UF\), hence
\[G=\mathrm{proj}_{G}\left(\Gamma\right)K\subseteq\left(U\mathrm{proj}_{G}\left(\Gamma\right)\right)\left(UK\right)=\left(U\mathrm{proj}_{G}\left(\Gamma\right)\right)F,\]
which means that \(U\mathrm{proj}_{G}\left(\Gamma\right)\) is a finite index subgroup of \(G\). Since \(G\) is torsion-free and divisible, it has no proper finite index subgroups, and therefore \(U\mathrm{proj}_{G}\left(\Gamma\right)=G\). This completes the proof.
\end{proof}

\nocite{*}
\bibliographystyle{amsplain}
\bibliography{references}

\providecommand{\bysame}{\leavevmode\hbox to3em{\hrulefill}\thinspace}
\providecommand{\MR}{\relax\ifhmode\unskip\space\fi MR }
\providecommand{\MRhref}[2]{%
  \href{http://www.ams.org/mathscinet-getitem?mr=#1}{#2}
}
\providecommand{\href}[2]{#2}
\begin{thebibliography}{10}

\bibitem{AvBjCuI}
N.~Avraham-Re'em, M.~Bj\"{o}rklund, and R.~Cullman, \emph{Locally integrable cross sections and their intersection covolume}, arXiv:2509.20836 (2025).

\bibitem{BaakeGrimm2013}
Michael Baake and Uwe Grimm, \emph{Aperiodic order, volume 1}, Cambridge University Press, 2013.

\bibitem{BjFi2019}
M.~Bj\"{o}rklund and A.~Fish, \emph{Approximate invariance for ergodic actions of amenable groups}, Discrete Analysis (2019).

\bibitem{BjFi2024}
\bysame, \emph{A {S}zemer\'{e}di type theorem for sets of positive density in approximate lattices}, Ergodic Theory and Dynamical Systems (2024), 1--31.

\bibitem{BjHaAL}
M.~Bj\"{o}rklund and T.~Hartnick, \emph{Approximate lattices}, Duke Mathematical Journal \textbf{167} (2018), no.~15, 2903--2964.

\bibitem{BjHa2022}
\bysame, \emph{Spectral theory of approximate lattices in nilpotent lie groups}, Mathematische Annalen \textbf{384} (2022), no.~3, 1675--1745.

\bibitem{BjHaPoI}
M.~Bj\"{o}rklund, T.~Hartnick, and F.~Pogorzelski, \emph{Aperiodic order and spherical diffraction, {I}: auto-correlation of regular model sets}, Proceedings of the London Mathematical Society \textbf{116} (2018), no.~4, 957--996.

\bibitem{BjFi2021}
Michael Bj\"orklund, Alexander Fish, and Ilya Shkredov, \emph{Sets of transfer times with small densities}, Journal de l'\'{E}cole polytechnique -- Math\'{e}matiques \textbf{8} (2021), 311--329.

\bibitem{bjorklund2025int}
Michael Bj\"{o}rklund, Tobias Hartnick, and Yakov Karasik, \emph{Intersection spaces and multiple transverse recurrence}, Journal d'Analyse Math\'{e}matique (2025), 1--54.

\bibitem{EinsiedlerWard}
M.~Einsiedler and T.~Ward, \emph{Ergodic theory: with a view towards number theory}, Graduate Texts in Mathematics, vol. 259, Springer, 2011.

\bibitem{gardner2002brunn}
R.~Gardner, \emph{The {B}runn--{M}inkowski inequality}, Bulletin of the American Mathematical Society \textbf{39} (2002), no.~3, 355--405.

\bibitem{HewittRoss}
E.~Hewitt and K.~Ross, \emph{Abstract harmonic analysis, volume {I}}, 2nd ed., Grundlehren der Mathematischen Wissenschaften, vol. 115, Springer, 1979.

\bibitem{hofmann2006structure}
K.~H. Hofmann and S.~A. Morris, \emph{The structure of compact groups: A primer for students -- a handbook for the expert}, Walter de Gruyter, 2006.

\bibitem{Kneser}
M.~Kneser, \emph{Summenmengen in lokalkompakten abelschen gruppen}, Mathematische Zeitschrift \textbf{66} (1956), 88--110.

\bibitem{lang1994algebraic}
S.~Lang, \emph{Algebraic number theory}, Graduate Texts in Mathematics, vol. 110, Springer, 1994.

\bibitem{meyer1972algebraic}
Y.~Meyer, \emph{Algebraic numbers and harmonic analysis}, vol.~2, Elsevier, 1972.

\bibitem{shkredov2015}
I.~D. Shkredov, \emph{An introduction to higher energies and sumsets}, arXiv preprint arXiv:1512.00627 (2015).

\bibitem{szabo1997simple}
L\'{a}szl\'{o} Szab\'{o}, \emph{A simple proof for the {J}ordan measurability of convex sets}, Elemente der Mathematik \textbf{52} (1997), no.~2, 84--86.

\end{thebibliography}

\end{document}